\documentclass[twoside]{article}
\usepackage[utf8]{inputenc}
\usepackage[french,british]{babel}
\usepackage{enumitem}  
\usepackage[T1]{fontenc}
\usepackage{amsmath,amsthm,amsfonts,amssymb} 
\usepackage{bbm} 
\usepackage{dsfont}
\usepackage{graphicx} 
\usepackage{float} 
\usepackage[margin=3cm]{geometry}
\usepackage{hyperref} 
\usepackage{xcolor} 
\usepackage{mathrsfs}
\hypersetup{colorlinks=true,citecolor={blue},
    linkbordercolor={white},
    linkcolor={green!50!black},urlcolor={black}} 
\usepackage{tikz} 
\usetikzlibrary{automata,positioning} 
\usepackage{authblk}
\usepackage{algorithm}
\usepackage{algorithmic}

\newtheorem{theo}{Theorem}[section]
\newtheorem*{theo*}{Theorem}
\newtheorem{prop}[theo]{Proposition}
\newtheorem*{prop*}{Proposition}
\newtheorem{lemme}[theo]{Lemma}
\newtheorem*{lemme*}{Lemma}
\newtheorem{definition}[theo]{Definition}
\newtheorem{corollaire}[theo]{Corollary}
\newtheorem*{corollaire*}{Corollary}
\newtheorem{exemple}[theo]{Example}
\newtheorem{remarque}{Remark}

\newcommand{\PP}{\mathbb{P}}

\newcommand{\NN}{\mathbb{N}}
\newcommand{\CC}{\mathbb{C}}
\newcommand{\ZZ}{\mathbb{Z}}
\newcommand{\RR}{\mathbb{R}}
\newcommand{\XX}{\mathbb{X}}

\newcommand{\tend}[4][]{\overset{#4}{\xrightarrow[#2\to#3]{#1}}}

\newcommand{\NIZ}[1]{\textrm{NIZ}_{_{#1}}}
\newcommand{\CST}[1]{\textrm{CST}_{_{#1}}}

\newcommand{\xdownarrow}[1]{%
  {\left\downarrow\vbox to #1{}\right.\kern-\nulldelimiterspace}
}

\title{On the variation of the sum of digits in the Zeckendorf representation : an algorithm to compute the distribution and mixing properties.}

\author[]{Yohan HOSTEN
}

\affil[]{LAMFA, Université de Picardie Jules Verne, CNRS UMR 7352, 33, rue Saint-Leu, 80000, Amiens, France, \href{mailto:yohan.hosten@math.cnrs.fr}{yohan.hosten@math.cnrs.fr}}
\date{}


\begin{document}
\maketitle
\textbf{Abstract.} We study probability measures defined by the variation of the sum of digits in the Zeckendorf representation. For $r\ge 0$ and $d\in\ZZ$, we consider $\mu^{(r)}(d)$ the density of integers $n\in\NN$ for which the sum of digits increases by $d$ when $r$ is added to $n$. We give a probabilistic interpretation of $\mu^{(r)}$ via the dynamical system provided by the odometer of Zeckendorf-adic integers and its unique invariant measure. We give an algorithm for computing $\mu^{(r)}$ and we deduce a control on the tail of the negative distribution of $\mu^{(r)}$, as well as the formula $\mu^{(F_{\ell})}=\mu^{(1)}$ where $F_{\ell}$ is a term in the Fibonacci sequence. Finally, we decompose the Zeckendorf representation of an integer $r$ into so-called ``blocks'' and show that when added to an adic Zeckendorf integer, the successive actions of these blocks can be seen as a sequence of mixing random variables.

\textbf{Résumé.} On étudie des mesures de probabilités définies grâce à la variation de la somme des chiffres en représentation de Zeckendorf. Pour $r\ge 0$ et $d\in\ZZ$, on considère $\mu^{(r)}(d)$ la densité des entiers $n\in\NN$ pour lesquels la somme des chiffres augmente de $d$ quand on ajoute $r$ à $n$. On donne une interprétation probabiliste à $\mu^{(r)}$ via le système dynamique fourni par l'odomètre des entiers Zeckendorf-adiques et son unique mesure invariante. Nous donnons un algorithme de calcul de $\mu^{(r)}$ et en déduisons un contrôle de la queue de distribution négative de $\mu^{(r)}$ ainsi que la formule $\mu^{(F_{\ell})}=\mu^{(1)}$ où $F_{\ell}$ est un terme de la suite de Fibonacci. Enfin, on décompose l'écriture en représentation de Zeckendorf d'un entier $r$ en ce que l'on appelle des ``blocs'' et nous montrons que lors d'une addition avec un entier Zeckendorf-adique, les actions successives de ces blocs peuvent être vues comme une suite de variables aléatoires mélangeantes.

\vspace{5mm}

\textbf{Keywords.} Zeckendorf expansion, sum-of-digits, generalized odometer, mixing process.

\vspace{5mm}

\textbf{MSC (2020).} 11A63, 11B39, 11K55 and 37A25.

\section{Introduction}
Throughout this article, we denote by $\NN:=\{0,1,2,\cdots\}$ the set of \emph{integers} and $\varphi:=\frac{1+\sqrt{5}}{2}$ represents the \emph{golden ratio}. We also define the well-known \emph{Fibonacci sequence} as follows: 
\begin{align}\label{Fib_def}
    &\left\{ \begin{array}{lll}
     F_1     & :=1,               & \\
     F_2     & :=1,               & \\
     F_{k} & :=F_{k-1}+F_{k-2}     & \textrm{ if } k\ge 3.
\end{array}\right.
\end{align}
By Zeckendorf's theorem (see \cite{EZ}, Theorem I.a, page 179 or \cite{CGL} Theorem A page 1), every integer can be uniquely written as a sum of non-consecutive Fibonacci terms. That is: for any $n\in\NN$, there exists a unique sequence of \emph{digits} $(n_k)_{k\ge 2}\in\{0,1\}^{\infty}$ without two consecutive $1$'s, finitely many of them being equal to $1$ and such that
\begin{align}\label{Zeck_decomposition}
        n&=\sum_{k\ge 2}n_kF_k.
\end{align}

For $n\neq 0$ and $\ell:=\max\{k:~n_k\neq 0\}$, we introduce the notation $\overline{n_{\ell}\cdots n_2}:=n$ which generalises the usual way we write numbers in integer base, and which we refer to as the \emph{(Zeckendorf) expansion} of $n$. This way to expand numbers is actually a particular case of an \emph{Ostrowski's numeration system} (see \cite{AO}, page 1 or \cite{VB}, page 211 or \cite{GBPL}). By convention, we set $\overline{0}:=0$. Then, we define the (Zeckendorf-)\emph{sum-of-digits} function as
\[s(n):=\sum_{k\ge 2} n_k.\]
A central object in our paper is the \emph{variation of the sum of digits} when we add a fixed integer $r$ to  $n$: for $r,n\in\NN$, we set
\begin{equation}\label{def delta zeck}
    \Delta^{(r)}(n):=s(n+r)-s(n).
\end{equation}
The analogous variation in integer base has been studied extensively. The first appearance in the integer base case is in a paper from Bésineau \cite{JB} in 1970. Using a statistical vocabulary, he shows the existence of the asymptotic density for the set of integers such that the variation is some integer $d\in\ZZ$. Given an integer $r$, these densities can be seen as a probability law. The variance of this law is studied, in binary base, by Emme and Prikhod'ko \cite{JEAP} or Spiegelhofer and Wallner \cite{LSMW}. Emme and Hubert \cite{JEPH1} prove a Central Limit Theorem in the binary case which is improved by Spiegelhofer and Wallner \cite{LSMW} (still in binary) or by the author in collaboration with de~la~Rue and Janvresse \cite{TREJYH} (in arbitrary integer base). We also refer to \cite{JFMLS,LS,MDMKLS,LS2} for connected results. Some theorems are proved for the Zeckendorf expansion of an integer: for instance, Griffiths \cite{MG} about the digit proportions; or Labbé and Lepšovà about the addition in this numeration system \cite{SBJL}. Drmota, Müllner and Spiegelhofer obtain results about the existence of primes numbers with a fixed Zeckendorf sum-of-digits (in \cite{MDCMLS}). However, not much has been done about this variation in the Zeckendorf representation except the works from Dekking \cite{FMD} which caracterise the integers such that $\Delta^{(1)}$ is $>0$ (or $<0$ or $=0$) and from Spiegelhofer \cite{LS} (Lemma 1.30) which adapt Bésineau's techniques and prove that, for any $d\in\ZZ$, the following asymptotic density exists
\[\mu^{(r)}(d):=\lim\limits_{N\rightarrow +\infty}\frac{1}{N}\left\vert\left\{ n<N:~\Delta^{(r)}(n)=d\right\}\right\vert.\]
In the present paper, we adapt the ergodic theory point of view introduced in \cite{TREJYH} to the Zeckendorf expansion, recovering Spiegelhofer's result and getting new results about $\mu^{(r)}$ and $\Delta^{(r)}$. In particular, we provide an algorithm that computes $\mu^{(r)}(d)$ and we represent $\Delta^{(r)}$ as the sum of a stochastic mixing process. A perspective we have with this result is to find a Central Limit Theorem for $\Delta^{(r)}$.

To find our results, following the path started in \cite{TREJYH}, we study the variations of the sum-of-digits function in an appropriate probability space given by the compact set $\XX$ of (Zeckendorf-)\emph{adic numbers}. We consider on $\XX$ the action of the odometer, and endow $\XX$ with its unique invariant probability measure $\PP$.

We extend $\Delta^{(r)}$ almost everywhere on $\XX$ and show in Section~\ref{SOzeck} (Proposition~\ref{prop_moment Zeck}) that, for every $d\in\ZZ$
\[\mu^{(r)}(d)=\PP\left(\left\{x\in\XX~:~\Delta^{(r)}(x)=d\right\}\right). \]
Using the Rokhlin towers of the dynamical system, we provide an algorithm to compute $\mu^{(r)}(d)$. This algorithm and its consequences can be adapted in integer base. A first implication is the following corollary on the behaviour of the (negative) tail of the distribution.
\begin{corollaire}\label{corollaire pour algo2}
    For $d$ small enough in $\ZZ$, we have the formula
    \[\mu^{(r)}(d-1)=\mu^{(r)}(d)\times \frac{1}{\varphi^{2}}.\]
\end{corollaire}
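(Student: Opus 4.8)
The plan is to work entirely on the probabilistic side, using the identity $\mu^{(r)}(d)=\PP(\{x\in\XX:\Delta^{(r)}(x)=d\})$ from Proposition~\ref{prop_moment Zeck}, and to understand the geometry of the event $E_d:=\{\Delta^{(r)}=d\}$ as $d\to-\infty$. The guiding heuristic is that a strongly negative variation can only be produced by a long carry cascade: when $r$ is added to $x$, the carry generated near the bottom propagates upward, and the only way to destroy many digits at once is for it to enter and collapse a long alternating block $\cdots 1010\cdots 10$ of $x$. Concretely, applying $F_{k-1}+F_k=F_{k+1}$ repeatedly, a carry entering at the foot of a block of $m$ alternating $1$'s turns those $m$ ones into a single $1$ just above the block, so this part of the addition contributes $\approx 1-m$ to $\Delta^{(r)}$. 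Thus the most negative values of $\Delta^{(r)}$ are governed by the length of the collapsing block, and decreasing $d$ by one should amount to lengthening that block by exactly one step $10$.

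The second ingredient I would isolate is the value of $\PP$ on cylinders, which I expect to read off from the Rokhlin-tower description underlying the algorithm. The key fact is that the $\PP$-measure of an admissible cylinder depends only on the index $p$ of its first unconstrained coordinate $x_p$, and equals $\varphi^{-(p-2)}$; for instance $\PP(x_2=1)=\varphi^{-2}$ (here $p=4$, since $x_3=0$ is forced) and $\PP(x_2=0)=\varphi^{-1}$ (here $p=3$). One checks this on the basic cylinders and propagates it through the renewal structure of the towers. The crucial consequence is multiplicative: pushing the first free coordinate up by two positions multiplies the measure by exactly $\varphi^{-2}$.

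With these two facts the proof should assemble as follows. For $d$ below a threshold $d_0(r)$ I would show that $E_d$ is a finite disjoint union $\bigsqcup_{i}\,[B_i]\cap A_i(d)$, where the $[B_i]$ are finitely many fixed cylinders describing the bounded ``interaction zone'' of $x$ with the digits of $r$, and where $A_i(d)$ prescribes, contiguously above that zone, an alternating block whose length is an affine function of $d$ followed by a stop digit $0$. Because the cascade is a single contiguous run of forced coordinates, each piece $[B_i]\cap A_i(d)$ is itself a genuine cylinder, specifying all coordinates from $x_2$ up to the stop; and because the interaction zone has size bounded in terms of $r$ only, the blocks $B_i$ do not change with $d$ — only the length of the alternating part does. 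Passing from $d$ to $d-1$ lengthens each alternating block by one $10$ and hence moves the first free coordinate of each piece up by two. By the cylinder formula every piece is multiplied by $\varphi^{-2}$, and since the decomposition is disjoint the same factor applies to $\PP(E_d)=\mu^{(r)}(d)$, which is the claimed recursion.

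The main obstacle is the combinatorial bookkeeping justifying this rigid decomposition of $E_d$ for $d$ small enough: I must prove that in this regime there is genuinely a single governing cascade, that the interaction of the carry with the fixed digits of $r$ (the ``blocks'' of $r$ introduced later in the paper) stabilises into $d$-independent patterns $B_i$ attached directly below the alternating block with no intervening gap, and that no qualitatively different configuration can produce a variation as negative as $d$. This is exactly where $d\le d_0(r)$ is used: below this threshold the top alternating block is forced long enough that its length alone records $d$, decoupling it from the finitely many possibilities at the bottom. Once this separation of scales is established, the measure computation is immediate and the factor $1/\varphi^2$ is precisely the cost of two extra admissible coordinates.
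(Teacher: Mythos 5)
Your overall route is the same as the paper's: identify, for $d$ below an explicit threshold, the level set $\{x\in\XX:\Delta^{(r)}(x)=d\}$ as a finite disjoint union of cylinders whose names consist of a $d$-independent word (the bounded interaction zone with the digits of $r$), an alternating run $(10)^j$ with $j$ affine in $d$, and the stopping pattern $00$ on top; then apply the cylinder-measure formula, under which each extra $10$ costs a factor $\varphi^{-2}$. Your measure fact is correct (it is Proposition~\ref{prop caract mesure zeck}: a cylinder has measure $\varphi^{-(p-2)}$ where $x_p$ is its first free coordinate), and your closing multiplicative step coincides exactly with the paper's proof of the corollary.

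The genuine gap is the one you yourself flag as ``the main obstacle'': the rigid decomposition of the level set is asserted, not proved, and it is the entire mathematical content of the statement. In the paper this decomposition is precisely what the New-Information-Zone machinery delivers: $\Delta^{(r)}$ is constant on each level of the towers outside the $r$ top levels; the sets $\NIZ{k}$ partition $\XX\backslash\{(01)^{\infty},(10)^{\infty}\}$; Lemmas~\ref{relation cylindres NIZ} and~\ref{relation par Delta cylindres NIZ} show that inserting $10$ into the name of a cylinder of $\NIZ{k}$ yields a cylinder of $\NIZ{k+2}$ on which $\Delta^{(r)}$ drops by exactly $1$; and Proposition~\ref{conseq algo 1} converts this into the precise count --- for $d\le m$, the minimum of the $2r$ values attained at orders $\ell+2$ and $\ell+3$, the value $d$ is taken on exactly $2r$ cylinders, one in each descending chain. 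Two points in your sketch would need real work that this framework handles automatically. First, the exact $-1$ shift is not obtained in the paper by the bottom-up carry-cascade analysis you outline, but by a top-down tower argument: the cut-and-stack structure forces $(n+r)_k=1$ on the relevant levels (Figure~\ref{main argument}), so that after adding $F_{k+1}$ the pattern $011$ normalizes to $100$ and the digit sum loses exactly one unit; this simultaneously disposes of the rightward (downward) carry propagation that your upward-cascade picture ignores and that is specific to the Zeckendorf system. Second, the disjointness of your pieces $[B_i]\cap A_i(d)$ (no nesting of cylinders), the existence and explicit value of the threshold $d_0(r)$, and the claim that no qualitatively different configuration reaches such negative values are all unproved in your text, whereas they are automatic once the pieces are located in the pairwise disjoint sets $\NIZ{k}$. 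As written, the proposal establishes only the easy half --- the factor $\varphi^{-2}$ once the geometry is known --- and the decisive combinatorial lemma is missing.
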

\begin{remarque}
One can show that the analogous result in base $b\ge 2$ is the same replacing the formula by $\mu^{(r)}(d-(b-1))=\mu^{(r)}(d)\times \frac{1}{b}$.
\end{remarque}
Another implication is the next theorem about the measure $\mu^{(F_{\ell})}$. 
\begin{theo}\label{theo muFk=mu1}
For any $\ell\ge 3$
\begin{align}\label{muFk=mu1}
    \mu^{(F_{\ell})}=\mu^{(1)}.
\end{align}
\end{theo}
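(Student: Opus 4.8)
The plan is to show $\mu^{(F_\ell)}=\mu^{(1)}$ by exhibiting a measure-preserving bijection (or more precisely an a.e. equality of distributions) between the two variation functions $\Delta^{(F_\ell)}$ and $\Delta^{(1)}$ on the probability space $(\XX,\PP)$. Since Proposition~\ref{prop_moment Zeck} identifies $\mu^{(r)}(d)$ with $\PP(\Delta^{(r)}=d)$, it suffices to produce a $\PP$-preserving map $T:\XX\to\XX$ such that $\Delta^{(F_\ell)}(x)=\Delta^{(1)}(Tx)$ for $\PP$-almost every $x$.

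Let me think about what I would take for $T$. The key combinatorial input should be the behaviour of Zeckendorf addition: adding $F_\ell$ to an adic number $x$ interacts with the digits of $x$ only from position $\ell$ upward together with the carry mechanics dictated by the Fibonacci recurrence, and the crucial point is that $F_\ell$ is a single-digit number in the Zeckendorf alphabet (its expansion is $\overline{1 0 \cdots 0}$ with the leading $1$ in position $\ell$). First I would make precise, probably by direct analysis of the carry algorithm already set up in the earlier sections, that the variation $\Delta^{(F_\ell)}(x)$ depends on $x$ only through a "tail pattern" of the digits $(x_k)_{k\ge \ell-1}$, and that this pattern has, under $\PP$, exactly the same distribution as the corresponding low-order pattern governing $\Delta^{(1)}(x)=\Delta^{(F_2)}(x)$. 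The natural candidate for $T$ is therefore the shift that relabels digit position $k$ as position $k-(\ell-2)$ (i.e.\ the map induced by the shift in the Fibonacci/Zeckendorf-adic coordinates), and I would verify that this shift pushes the invariant measure $\PP$ forward to itself — this should follow from the explicit product-like/Markovian description of $\PP$ as the unique odometer-invariant measure, whose transition structure is translation-invariant in the digit index.

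The key steps, in order, are: (i) recall from Proposition~\ref{prop_moment Zeck} that $\mu^{(r)}(d)=\PP(\Delta^{(r)}=d)$, reducing the claim to equality of laws of $\Delta^{(F_\ell)}$ and $\Delta^{(1)}$; (ii) describe the addition of $F_\ell$ to an adic number and show that the resulting change in the sum of digits is computed by the same local rule as adding $1=F_2$, but read starting at position $\ell$ rather than at the bottom; (iii) introduce the index-shift map $T$ on $\XX$ and check that $\Delta^{(F_\ell)}=\Delta^{(1)}\circ T$ almost everywhere; and (iv) prove $T_*\PP=\PP$ using the characterisation of $\PP$ as the unique invariant measure together with its shift-invariant Markov structure. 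Combining (iii) and (iv) with (i) gives $\mu^{(F_\ell)}(d)=\PP(\Delta^{(1)}\circ T=d)=\PP(\Delta^{(1)}=d)=\mu^{(1)}(d)$ for all $d$.

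The hard part, I expect, is step (ii) together with the measure-invariance in step (iv): one must argue carefully that the carry propagation triggered by adding $F_\ell$ does not "see" the low-order digits below position $\ell$ in a way that would break the identification with the $F_2$ case. Because the Fibonacci carry rules are not a simple group translation (unlike the base-$b$ case, where the $b$-adic structure makes the shift manifestly measure-preserving), the self-similarity of $\PP$ under the digit-index shift needs the precise structure of the Rokhlin towers / the stationary Markov measure, and the boundary effects at positions $\ell-1,\ell$ must be handled so that the two local rules genuinely coincide. I would isolate this as a lemma on the cylinder probabilities of $\PP$ and reduce everything else to bookkeeping.
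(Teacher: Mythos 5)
Your step (iii) is false as stated, and this is precisely the trap the paper warns about: in the Zeckendorf system carries propagate to the \emph{right} as well as to the left, so $\Delta^{(F_\ell)}(x)$ is \emph{not} a function of the tail $(x_k)_{k\ge \ell-1}$ alone. Concretely, suppose $x_{\ell}=1$ with $x_{\ell+1}=x_{\ell+2}=0$. If the digits below are $x_{\ell-1}x_{\ell-2}x_{\ell-3}=000$ (case~\eqref{recul1} of Proposition~\ref{T^{F_n}} with $\ell=\ell'=0$), then $\overline{\cdots 00\mathbf{1}000\cdots}\mapsto \overline{\cdots 01\mathbf{0}010\cdots}$ and $\Delta^{(F_\ell)}(x)=+1$; if instead $x_{\ell-1}\cdots x_{\ell-4}=0010$ (case~\eqref{recul2}), then $\overline{\cdots 00\mathbf{1}0010\cdots}\mapsto\overline{\cdots 01\mathbf{0}1000\cdots}$ and $\Delta^{(F_\ell)}(x)=0$. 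Same digits at positions $\ge \ell-1$, different values: whether the right-stopping pattern below position $\ell$ is $w_0$ or $w_1$ shifts the variation by $1$. Meanwhile $\Delta^{(1)}(\sigma^{\ell-2}x)$ sees only the pattern from position $\ell$ upward (Lemma~\ref{On Delta1 Zeck}) and equals $0$ in both situations, so the pointwise identity $\Delta^{(F_\ell)}=\Delta^{(1)}\circ\sigma^{\ell-2}$ fails on a set of positive measure. Your step (iv) fails independently: the index shift does \emph{not} preserve $\PP$. The paper notes in Subsection~\ref{PIM} that $\PP(x_k=1)=F_{k-1}/\varphi^{k}$ depends on $k$; the transition kernel of the Markov chain is indeed translation-invariant, but the initial law $\PP(x_2=1)=1/\varphi^2$ is not its stationary law $1/(1+\varphi^2)$, so $\sigma_*\PP\neq\PP$ and no relabelling of indices is measure-preserving.

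What makes the theorem true is that these two defects — the extra $+1$ from right-propagation when the stopping pattern is $w_0$, and the non-stationarity of the bottom digits — cancel exactly \emph{in distribution}, and proving that cancellation is essentially the whole content; an abstract appeal to ``some measure-preserving $T$ with $\Delta^{(F_\ell)}=\Delta^{(1)}\circ T$'' is circular, since such a $T$ exists if and only if the laws already agree. The paper's actual proof does the accounting explicitly with the Rokhlin-tower algorithm of Section~\ref{HTCM}: it computes $\mu^{(1)}$ in closed form (Proposition~\ref{mu1}), shows $\Delta^{(F_\ell)}=1$ on $\NIZ{\ell}$, takes the value $0$ on $F_{\ell-1}$ levels and $1$ on $F_{\ell-2}$ levels of $\NIZ{\ell+1}$, and $0$ on all of $\NIZ{\ell+2}$ (Lemma~\ref{DeltaFl_orderl}, Propositions~\ref{DeltaFl_orderl+1} and~\ref{DeltaF1_orderl+2}), then propagates by Corollary~\ref{corollaire pour algo} and sums, e.g.\ $\mu^{(F_\ell)}(1)=F_{\ell-1}/\varphi^{\ell}+F_{\ell-2}/\varphi^{\ell+1}=1/\varphi^{2}$ via $F_{\ell-1}\varphi+F_{\ell-2}=\varphi^{\ell-1}$. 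If you want to salvage your outline, you would need to replace the shift by a map that also permutes the low-order configurations (exchanging the $w_0$/$w_1$ mass appropriately) while transporting the non-stationary bottom marginals correctly — at which point you are redoing the tower computation in disguise.
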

The analogous result in integer base $b$ replaces $F_{\ell}$ by $b^{\ell}$. It is actually a trivial result in base $b$. However, in the Zeckendorf decomposition, this result is much less obvious, due to the particular behaviour of carry propagations that we describe in Subsection~\ref{HTAI}. The main difference with additions in base $b$ is that carries propagate, here, in both directions.

Now, to state our mixing result, we need to define the notion of \emph{blocks} in the expansion of an integer $r$ and to define a probabilistic notion of ($\alpha$-)\emph{mixing coefficients} (see the survey from Bradley \cite{RCB1} for others).
\begin{definition}\label{blocks Zeck}
       A block in the expansion of an integer $r\in\NN$ is defined as a maximal sequence of the pattern $\overline{10}$. (If $r_2=1$, we agree that a maximal sequence $\overline{r_{2\ell}\cdots r_2}$ is a block if $r_{2k}=1$ for $k=1,\cdots, \ell$.) We define $\rho(r)$ as the number of blocks in the expansion of $r$.
\end{definition}
\begin{figure}[H]
    \centering
    \includegraphics[scale=0.35]{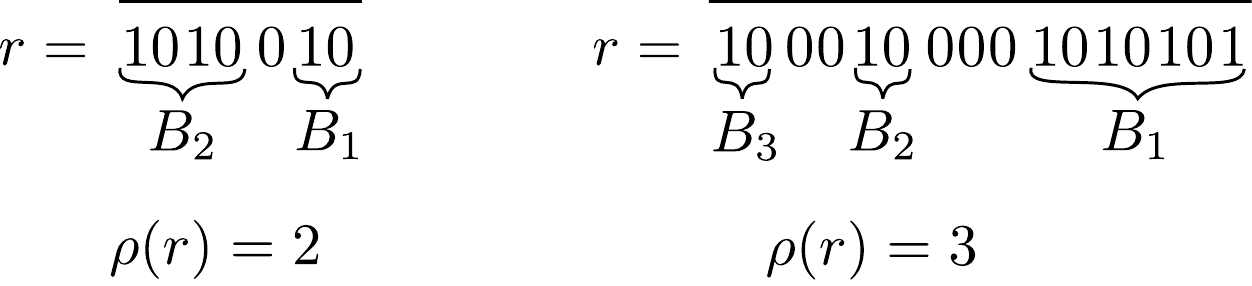}
    \caption{Two examples of decomposition in blocks.}
\end{figure}

\begin{definition}\label{coeff alpha melange Zeck}
      Let $(X_i)_{i\ge 1}$ be a (finite or infinite) sequence of random variables. The associated \emph{$\alpha$-mixing coefficients} $\alpha(k)$, $k\ge 1$, are defined by
   \[\alpha(k):=\sup_{p\ge 1} \hspace{1mm} \sup_{A,B} \hspace{2mm} \lvert \PP(A\cap B)-\PP(A)\PP(B) \rvert\]
   where the second supremum is taken over all events $A$ and $B$ such that 
\begin{itemize}
    \item $A\in\sigma(X_i:1\le  i\le p)$ and
    \item $B\in\sigma(X_i:  i\ge k+p)$.
\end{itemize}
By convention, if $X_i$ is not defined when $i\ge k+p$ then the $\sigma$-algebra is trivial.
\end{definition}
For an integer $r$, we enumerate the blocks of $r$ from the units position and we define $X_i^{(r)}$ as the action of the $i^{th}$ block once the previous blocks have already been taken into consideration (see Subsection~\ref{the process} for more details). These actions are constructed in order to have the equality
\begin{align}\label{decomposition delta zeck}
    \Delta^{(r)}=\sum_{i=1}^{\rho(r)}X_i^{(r)}.
\end{align}
We state the following theorem which gives an upper bound on the $\alpha$-mixing coefficients that is independent of $r$. 
\begin{theo}\label{mixing action bloc zeck}
 The $\alpha$-mixing coefficients of $(X_i^{(r)})_{i=1,\cdots,\rho(r)}$ satisfy
\[\forall k\ge 1, \hspace{5mm} \alpha(k)\le 12\left(1-\frac{1}{\varphi^8}\right)^{\frac{k}{6}}+\frac{1}{\varphi^{2k}}.\]
\end{theo}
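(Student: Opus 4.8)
The plan is to isolate the two distinct mechanisms by which the early actions $X_1^{(r)},\dots,X_p^{(r)}$ and the late actions $(X_i^{(r)})_{i\ge k+p}$ can be correlated under $\PP$, and to control each of them separately. The first mechanism is a genuine carry travelling across the $k-1$ blocks that separate block $p$ from block $k+p$; the second is the intrinsic dependence between far-apart digits of a random $x\in\XX$ under the invariant measure. Accordingly, I would fix $p\ge 1$ together with events $A\in\sigma(X_i^{(r)}:1\le i\le p)$ and $B\in\sigma(X_i^{(r)}:i\ge k+p)$, and aim for a bound of the shape
\[
\bigl|\PP(A\cap B)-\PP(A)\PP(B)\bigr|\le (\text{probability a carry crosses the gap})+(\text{residual mixing}),
\]
the first summand producing the $\varphi^{-2k}$ term and the second the $12(1-\varphi^{-8})^{k/6}$ term.

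For the carry term, I would first make precise the local description of $X_i^{(r)}$ already used to obtain \eqref{decomposition delta zeck}: once the carry handed up from block $i-1$ is fixed, both $X_i^{(r)}$ and the carry handed on to block $i+1$ are determined by the digits of $x$ in a bounded window around block $i$. This turns the block actions into a chain driven by the digits, whose only channel for passing information from the blocks $\le p$ to the blocks $\ge k+p$ through the arithmetic is a carry surviving the entire gap. The key estimate is then that, under $\PP$, the probability that such a carry reaches block $k+p$ decays geometrically in the number of blocks traversed, at rate $\varphi^{-2}$ per block, which yields the contribution $\varphi^{-2k}$. This is exactly where the bidirectional nature of Zeckendorf carries described in Subsection~\ref{HTAI} makes the analysis delicate, and I expect it to be the main obstacle: in contrast with integer base, one must rule out that either a left-going or a right-going carry bridges the gap with more than exponentially small probability.

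For the residual term, I would condition on the complementary event that no carry crosses the gap. On that event the early actions are measurable with respect to the digits of $x$ below the top of block $p$, and the late actions with respect to the digits above the bottom of block $k+p$, so that $A$ and $B$ are separated by a stretch of digits covering the $k-1$ intermediate blocks. Since $\PP$ is the Markov invariant measure of the golden-mean subshift, far-apart digits decouple, and I would implement this by a coupling/renewal argument rather than by a direct spectral estimate, so as to keep the bound uniform over the variable block geometry of $r$: I would partition the separating digits into chunks, each covering at most six blocks, and observe that inside each chunk the digit word realises, with probability at least $\varphi^{-8}$, a short admissible pattern that renders the two sides conditionally independent. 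Treating the renewal events of disjoint chunks as essentially independent, the probability that no renewal occurs across the whole gap is at most $(1-\varphi^{-8})^{\#\text{chunks}}$ with $\#\text{chunks}\ge k/6$; on the renewal event $A$ and $B$ are independent, so this bounds the residual dependence by $12(1-\varphi^{-8})^{k/6}$ once the coupling constant is absorbed. Adding the two contributions and taking the suprema over $p$, $A$ and $B$ then gives the stated bound, uniformly in $r$.
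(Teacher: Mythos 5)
There is a genuine gap, and it sits exactly at the step your proof hinges on. You assign the term $\varphi^{-2k}$ to ``the probability that a carry crosses the gap'', at rate $\varphi^{-2}$ per block, on the grounds that the only arithmetic channel between the first $p$ blocks and the blocks of index $\ge k+p$ is a single carry surviving the entire gap. That claim is false in the Zeckendorf setting. First, the intermediate blocks are themselves added one after another, so information can be \emph{relayed}: the left-going carry launched by the addition of block $i$ may modify digits in the zone of block $i+1$, whose own addition then launches a fresh carry, and so on across the gap, without any single carry ever traversing it; since consecutive blocks of $r$ may be separated by a single $0$, each relay step has probability bounded below by a constant, and controlling the relay uniformly in $r$ is nontrivial. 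Second, the late blocks propagate carries \emph{to the right} (downward), so the variables $X_i^{(r)}$, $i\ge k+p$, can depend on low digits of $x$ even when no early carry reaches their region. Neutralising both channels is precisely the hard combinatorial content of the theorem, and it is paid for by the $(1-\varphi^{-8})^{k/6}$ term, not the $\varphi^{-2k}$ term: what can be guaranteed cheaply is an explicit two-sided barrier, namely eight prescribed zero digits of $x$ around a block of $r$, which Corollaries~\ref{stopping condition bloc 1} and~\ref{stopping cdt bloc >1} show to be stable under the whole sequential addition and to block propagation in both directions; by Proposition~\ref{borne proba Zeck} the probability that no such barrier occurs among the $\sim k/6$ candidate blocks in a third of the window is at most $(1-\varphi^{-8})^{k/6}$. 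Your chunk-renewal computation with $\varphi^{-8}$-patterns over six-block chunks is, in substance, exactly this barrier estimate — but you apply it to the digit process, where it is not needed (a single $0$ already renews the Markov chain, Proposition~\ref{renouvellement}, and the digit $\phi$-mixing coefficients decay like $2\varphi^{-2k}$ by Proposition~\ref{Melange_coord}), while the carry estimate you actually need is left unproven.

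A second concrete error: even on your ``no carry crosses'' event, the early actions are \emph{not} measurable with respect to the digits below the top of block $p$ — each early addition moves a carry leftwards up to the first stable occurrence of $00$, which may lie anywhere inside the gap. This is why the paper plants \emph{two} barriers, one in the left third and one in the right third of the window, producing digit-measurable events $A'$, $B'$ with $A\cap C=A'\cap C$ and $B\cap C=B'\cap C$, supported on digit ranges separated by $\gtrsim k$ positions ($N_1$, $N_2$ in the paper's notation); it then transfers between $\PP_C$ and $\PP$ at cost $\PP(\overline{C})$ per exchange, and only at that point invokes digit mixing, getting $\frac{1}{2}\phi(k)\le\varphi^{-2k}$ via Propositions~\ref{Melange_coord} and~\ref{Lien coeff phi et alpha Zeck}. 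In the correct accounting, $6\PP(\overline{C})\le 12(1-\varphi^{-8})^{k/6}$ is the price of the carry barriers and $\varphi^{-2k}$ is the digit-mixing term — the reverse of your assignment. To repair your outline you would either have to prove the uniform-in-$r$ relay-plus-downward-carry estimate you assert (which is not at rate $\varphi^{-2}$ per block in any obvious way, and would also require handling the measure distortion caused by conditioning on a path-dependent event), or replace your conditioning event by the explicit barrier cylinders — at which point you have reconstructed the paper's proof.
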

In \cite{TREJYH}, a similar result together with some control of the variance depending on the number of blocks is used to prove a Central Limit Theorem for $\mu^{(r)}$ in the integer-base case. Such a control is, here, more difficult than in base $b$ since we do not have inductive relations on $\mu^{(r)}$.

\subsection*{Roadmap}

Section~\ref{HTA} is devoted to understanding the effect of adding ``$1$'' and ``$F_k$'' on the digits of a Z-adic integers. We highlight stopping patterns in the carry propagation, which play an important role in the analysis.

In Section~\ref{UEO}, we focus on the unique ergodic measure $\PP$ of the odometer. We show that $\PP$ satisfies some renewal properties (Proposition~\ref{renouvellement}) and we estimate the $\phi$-mixing coefficients for the coordinates of a Z-adic integer (Proposition~\ref{Melange_coord}).

Then, in Section~\ref{SOzeck}, we place the study of the measures $\mu^{(r)}$ in the context of the odometer on $\XX$. We extend $\Delta^{(r)}$ almost everywhere on $\XX$ and we show that the convergence
\[\lim\limits_{N\to\infty}\frac{1}{N}\sum_{n<N}f(\Delta^{(r)}(n))=\int_{\XX} f(\Delta^{(r)}(x)) \mathrm{d}\PP(x)\]
is satisfied for functions $f:\ZZ\rightarrow \CC$ of polynomial growth (Proposition \ref{prop_moment Zeck}) and, more generally, for functions $f$ such that $f\circ\Delta^{(r)}$ is integrable (Proposition \ref{prop_gene Zeck}). We deduce from Proposition \ref{prop_moment Zeck} that $\mu^{(r)}(d)=\PP(\{x\in \XX: \Delta^{(r)}(x)=d \})$ and that $\mu^{(r)}$ has finite moments. In particular, we show that $\mu^{(r)}$ is of zero-mean.

In Section~\ref{HTCM}, we construct an algorithm that computes $\mu^{(r)}$. A pseudo-code is given in Subsection~\ref{PC}. Also, in Subsection~\ref{ROTA}, we prove Corollary~\ref{corollaire pour algo2} which gives a control on the tail of $\mu^{(r)}$.

Section~\ref{HTPMFM1} is devoted to the proof of Theorem~\ref{theo muFk=mu1}. The proof consists in applying the algorithm.

In the last section, we build a finite sequence of random variables associated to the addition of some integer $r$ that will give the decomposition of $\Delta^{(r)}$ mentioned at \eqref{decomposition delta zeck}. Using that sequence, we prove Theorem~\ref{mixing action bloc zeck} on the estimation of the $\alpha$-mixing coefficients for this sequence.

\subsection*{Acknowledgments}
The author wishes to thank Thierry de la Rue and Élise Janvresse who introduced him to this subject. They gave the author a lot of valuable advice.

\section{How to do additions}\label{HTA}

    \subsection{How to add integers}\label{HTAI}
    
Here we describe the algorithm of the addition using the Zeckendorf way to represent numbers. We start with the addition of $1$. There are two cases.
\begin{enumerate}
    \item Either there exists $\ell\ge 0$ such that the Zeckendorf decomposition of $n$ is 
    \[n=\sum_{k=1}^{\ell}F_{2k+1}+\sum_{k\ge 2\ell+4}n_kF_k\] 
    Then, with the relations \eqref{Fib_def}, we get 
    \[\begin{array}{ccccccc}
         (n)    &       & \cdots    & n_{2\ell+4}   & 0 & 0 & (10)^{\ell} \\
                &+      &           &               &   &   & 1 \\ \hline
         (n+1)  &=      & \cdots    & n_{2\ell+4}   & 0 & 1 & (00)^{\ell}   
    \end{array}\]
    Indeed, $F_3+\cdots F_{2\ell+1}+F_2=F_{2\ell+2}$.
    \item Or there exists $\ell\ge 0$ such that 
    \[n=\sum_{k=1}^{\ell+1}F_{2k}+\sum_{k\ge 2\ell+5}n_kF_k.\]
    Then, for the same reasons 
    \[\begin{array}{ccccccccc}
         (n)    &       & \cdots    & n_{2\ell+5}   & 0 & 0 & 1 & (01)^{\ell} \\
                &+      &           &                &   &   &    & 1 \\ \hline
         (n+1)  &=      & \cdots    & n_{2\ell+5}   & 0 & 1 & 0 & (00)^{\ell}   
    \end{array}\]
\end{enumerate}
We observe that adding $1$ to the rightmost digit of a block as defined in \ref{blocks Zeck} modifies that block into a chain of $0$ digits of the same length and put a $1$ at the first left position of the block.

Of course, in order to compute the addition of two integers, adding $1$ as many times as needed is enough. However, we want to show a main difference with the addition in integer base. In integer base, adding $1$ at some position $k\ge 2$ to an integer $n$ may change the digits of the expansion of $n$ of higher indices due to a carry propagations. Here, it can also change the digits of lower indices. 
We consider the addition $n+F_k$ where $k\ge 3$. We observe that a consequence of \eqref{Fib_def} is 
\begin{align}\label{Fib_relat2}
    2F_k=&\left\{\begin{array}{ll}
     F_2+F_4 &\textrm{ if }k=3 \textrm{,} \\
     F_{k+1}+F_{k-2} & \textrm{ otherwise.}
\end{array}\right.
\end{align}
Many cases appear. For simplicty, the digit in \textcolor{red}{\textbf{color}} will represent the digit at position $k$ and we do not represent digits that remain the same in the expansion of $n$ and $n+F_k$. We start with the cases that change only digits of indices $\ge k$.
\begin{enumerate}
    \item If $n=\overline{\cdots 0\textcolor{red}{\textbf{0}}0\cdots}$ then 
        \[\begin{array}{cccccccc}
         (n)      &       & \cdots    & 0 & \textcolor{red}{\textbf{0}} & 0& \cdots\\
         (+F_k)   &+      &           &   & \textcolor{red}{\textbf{1}} &  &  \\ \hline
         (n+F_k)  &=      & \cdots    & 0 & \textcolor{red}{\textbf{1}} & 0& \cdots
    \end{array}\]
    \item If there exists $\ell\ge 0$ such that $n=\overline{\cdots 001(01)^{\ell}\textcolor{red}{\textbf{0}}\cdots}$ then 
    \[\begin{array}{cccccccccc}
         (n)        &       & \cdots    & 0 & 0 & 1 & (01)^{\ell} &\textcolor{red}{\textbf{0}} &  \cdots\\
         (+F_k)     &+      &           &   &   &   &             &\textcolor{red}{\textbf{1}} &   \\ \hline
         (n+F_k)    &=      & \cdots    & 0 & 1 & 0 &(00)^{\ell}  &\textcolor{red}{\textbf{0}} &  \cdots
    \end{array}\]
\end{enumerate}
We continue with the first case where the digit of index $k-1$ is changed.

\begin{enumerate}[resume]
     \item If there exists $\ell\ge 0$ such that $n=\overline{\cdots 00(10)^{\ell}\textcolor{red}{\textbf{0}}1\cdots}$ then \[\begin{array}{ccccccccc}
         (n)      &       & \cdots    & 0 & 0 & (10)^{\ell} &\textcolor{red}{\textbf{0}} & 1 & \cdots\\
         (+F_k)   &+      &           &   &   &             &\textcolor{red}{\textbf{1}} &   &  \\ \hline
         (n+F_k)  &=      & \cdots    & 0 & 1 & (00)^{\ell} &\textcolor{red}{\textbf{0}} & 0 & \cdots
    \end{array}\]
\end{enumerate}

Now, we consider the cases where many digits of indices $<k$ are changed. It is due to \eqref{Fib_relat2}.
\begin{enumerate}[resume]
    \item\label{item 4} If $k\ge 5$ and there exist $\ell,\ell'\ge 0$ such that $n=\overline{\cdots 00(10)^{\ell}\textcolor{red}{\textbf{1}}(01)^{\ell'}000\cdots}$ then
        \[\begin{array}{cccccccccccc}
         (n)      &       & \cdots    & 0 & 0 & (10)^{\ell} &\textcolor{red}{\textbf{1}} & (01)^{\ell'} & 0 & 0 & 0 & \cdots\\
        (+F_k)    &+      &           &   &   &             &\textcolor{red}{\textbf{1}} &              &   &   &   & \\ \hline
         (n+F_k)  &=      & \cdots    & 0 & 1 & (00)^{\ell} &\textcolor{red}{\textbf{0}} & (10)^{\ell'} & 0 & 1 & 0 & \cdots
    \end{array}\]
    \item\label{item 5} If $k\ge 6$ and there exist $\ell,\ell'\ge 0$ such that $n=\overline{\cdots 00(10)^{\ell}\textcolor{red}{\textbf{1}}(01)^{\ell'}0010\cdots}$ then
        \[\begin{array}{ccccccccccccc}
         (n)      &       & \cdots    & 0 & 0 & (10)^{\ell} &\textcolor{red}{\textbf{1}} & (01)^{\ell'} & 0 & 0 & 1 & 0 &\cdots\\
         (+F_k)   &+      &           &   &   &             &\textcolor{red}{\textbf{1}} &              &   &   &   &   & \\ \hline
         (n+F_k)  &=      & \cdots    & 0 & 1 & (00)^{\ell} &\textcolor{red}{\textbf{0}} & (10)^{\ell'} & 1 & 0 & 0 & 0 &\cdots
    \end{array}\]
\end{enumerate}
Finally, we consider the ``boundary'' cases where all the digits of indices $<k$ are changed.
\begin{enumerate}[resume]
    \item If $k\ge 4$, $k$ is even and there exists $\ell\ge 0$ such that $n=\overline{\cdots 00(10)^{\ell}\textcolor{red}{\textbf{1}}(01)^{\frac{k-2}{2}}}$ then
        \[\begin{array}{cccccccc}
         (n)      &       & \cdots    & 0 & 0 & (10)^{\ell} &\textcolor{red}{\textbf{1}} & (01)^{\frac{k-2}{2}}\\
        (+F_k)    &+      &           &   &   &             &\textcolor{red}{\textbf{1}} & \\ \hline
         (n+F_k)  &=      & \cdots    & 0 & 1 & (00)^{\ell} &\textcolor{red}{\textbf{0}} & (10)^{\frac{k-2}{2}}
    \end{array}\]
    \item If $k\ge 5$, $k$ is odd and there exists $\ell\ge 0$ such that $n=\overline{\cdots 00(10)^{\ell}\textcolor{red}{\textbf{1}}(01)^{\frac{k-3}{2}}0}$ then
        \[\begin{array}{ccccccccc}
         (n)      &       & \cdots    & 0 & 0 & (10)^{\ell} &\textcolor{red}{\textbf{1}} & (01)^{\frac{k-3}{2}} & 0 \\
        (+F_k)    &+      &           &   &   &             &\textcolor{red}{\textbf{1}} &                      &\\ \hline
         (n+F_k)  &=      & \cdots    & 0 & 1 & (00)^{\ell} &\textcolor{red}{\textbf{0}} & (10)^{\frac{k-3}{2}} & 1  
    \end{array}\]
\end{enumerate}

    \subsection{The Z-adic integers and how to add one of them to an integer}\label{ZIHTAI}

We define the \textit{Zeckendorf-adic integers} (or \textit{Z-adic integers} for simplicity) as elements of \[\XX:=\big\{x\in\{0;1\}^{\NN_{\ge 2}} : \forall k\ge 2~x_kx_{k+1}=0\big\}.\]
Also, coordinates of a Z-adic integer $x\in \XX$ are interpreted as digits in the Zeckendorf representation: elements of $\XX$ can be viewed as ``generalized integers having possibly infinitely many non zero digits in Zeckendorf representation''. An element $x=(x_k)_{k\ge 2}\in \XX$ will be represented as a left-infinite sequence $(\cdots, x_3, x_2)$, $x_2$ being the unit digit. We endow $\XX$ with the product topology which turns it into a compact metrizable space. The set $\NN$ can be identified with the subset of sequences with finite support. More precisely, using the inclusion function 
\begin{align*}
    i:n=\overline{n_{\ell}\cdots n_2} \in\NN \longmapsto (\cdots, 0,n_{\ell},\cdots, n_2) \in \XX
\end{align*}
we identify $\NN$ and $i(\NN)$. We can also identify the notation 
\[(\cdots, x_3, x_2)=\overline{\cdots x_3 x_2.}\]
We take the opportunity to define $\XX_f$ as the finite sequences of $0$'s and $1$'s without two consecutive $1$'s. For instance, the Zeckendorf expansion of a given integer is composed using a sequence in $\XX_f$. Let us define, for $\ell\ge 2$ and $(n_k)_{k\ge 2}\in\XX_f$, the \textit{cylinder} $C_{n_{\ell}\cdots n_2}$ as the set of sequences $x\in\XX$ such that $x_i=n_i$ for $ i=2,\cdots, \ell$. We observe that $n_{\ell}\cdots n_2$ is not necessarily the Zeckendorf expansion of a given integer : the leftmost digit(s) can be $0$('s).

We want to extend on $\XX$ the transformation $n\mapsto n+1$ defined on $\NN$. Thanks to the description given in Subsection \ref{HTAI}, it is convenient to consider the transformation $T$ on $\XX$ defined by the following formula, where $\ell\in\NN$
\[\begin{array}{clcl}
    T:  & \XX & \longrightarrow &\XX \\
        & \overline{\cdots x_{2\ell+4}00(10)^{\ell}}& \longmapsto & \overline{\cdots x_{2\ell+4}01(00)^{\ell}} \\
        & \overline{\cdots x_{2\ell'+3}001(01)^{\ell}}& \longmapsto & \overline{\cdots x_{2\ell'+3}010(00)^{\ell}} \\
        & \overline{(10)^{\infty}}& \longmapsto & \overline{0^{\infty}} \\
        & \overline{(01)^{\infty}}& \longmapsto & \overline{0^{\infty}} 
\end{array}\]
Indeed, thanks to the Subsection \ref{HTAI}, we observe that $T_{|\NN}(n)=n+1$. Now, if we take $x\in\XX$ whose expansion contains two consecutive $0$'s, we observe that the sequence $(\overline{x_{\ell}\cdots x_2}+1)_{\ell\ge 2}$ converges to $T(x)$ because the digits will not changed eventually so we can define $x+1:=T(x)$ in that case. Otherwise, if $x\in\XX$ does not have two consecutive $0$'s in its expansion, there are two cases : $\overline{(10)^{\infty}}$ and $\overline{(01)^{\infty}}$. Adding $1$ to the truncated sequence $\left(\overline{(10)^{\ell}}\right)_{\ell\in\NN}$ converges to $\overline{(10)^{\infty}}+1:=\overline{0^{\infty}}$. It is the same for $\overline{(01)^{\infty}}$. Thus, the transformation $T$ can be described in a simpler way as 
 \[T:
\left\lbrace
  \begin{array}{ccc}
    \XX & \longrightarrow   &\XX \\
    x   & \longmapsto       & x+1
  \end{array}
\right.\]

Due to the two pre-images of $0^{\infty}$, $T$ is not a homeomorphism on $\XX$ but remains continuous and surjective on $\XX$. It is one-to-one on $\XX\backslash{\{0^{\infty}\}}$. Thus, $(\XX,T)$ is a topological dynamical system that we call the \emph{Odometer}. 


We know how to add $1$ to a Z-adic integer $x$. Repeating this operation enables us to add an integer $r$ to $x$. For later purposes, we need to specify how to add $F_k$ ($\ge 3$) directly. In Subsection~\ref{HTAI}, we have compute $x+F_k$ for the $x\in\XX$ which has two consecutive $0$'s at indices $>k$. Thus, we only need to focus on what happens if $x$ does not have two consecutive $0$'s at indices $>k$. There is only a finite number of cases to consider which we detail below. For simplicity again, we write in \textcolor{red}{\textbf{color}} the digits at position $k$ and we do not represent digits that are not modified. We start with the cases where the only digits that change are those of indices $\ge k-1$.
\begin{enumerate}
    \item If $x=\overline{(10)^{\infty}\textcolor{red}{\textbf{0}}1\cdots}$ then 
    \[\begin{array}{cccccc}
        (x)    &   & (10)^{\infty} & \textcolor{red}{\textbf{0}} & 1 & \cdots  \\
        (F_k)  & + &               & \textcolor{red}{\textbf{1}} &   &       \\ \hline
        (x+F_k)& = & (00)^{\infty} & \textcolor{red}{\textbf{0}} & 0 & \cdots
    \end{array}\]
    \item If $x=\overline{(01)^{\infty}\textcolor{red}{\textbf{0}}\cdots}$ then 
    \[\begin{array}{cccccc}
        (x)    &   & (01)^{\infty} & \textcolor{red}{\textbf{0}} &  \cdots  \\
        (F_k)  & + &               & \textcolor{red}{\textbf{1}} &        \\ \hline
        (x+F_k)& = & (00)^{\infty} & \textcolor{red}{\textbf{0}} &  \cdots
    \end{array}\]
\end{enumerate}
Now we consider cases where digits of small indices are changed but not all them.
\begin{enumerate}[resume]
    \item If $k\ge 5$ and there exists $\ell'\ge0$ such that $x=\overline{(10)^{\infty}\textcolor{red}{\textbf{1}}(01)^{\ell'}000\cdots}$ then 
    \[\begin{array}{ccccccccc}
        (x)    &   & (10)^{\infty} & \textcolor{red}{\textbf{1}} & (01)^{\ell'} & 0 & 0 & 0 & \cdots  \\
        (F_k)  & + &               & \textcolor{red}{\textbf{1}} &              &   &   &   & \\ \hline
        (x+F_k)& = & (00)^{\infty} & \textcolor{red}{\textbf{0}} & (10)^{\ell'} & 0 & 1 & 0 &  \cdots
    \end{array}\]
    \item If $k\ge 6$ and there exists $\ell'\ge0$ such that $x=\overline{(10)^{\infty}\textcolor{red}{\textbf{1}}(01)^{\ell'}0010\cdots}$ then 
    \[\begin{array}{cccccccccc}
        (x)    &   & (10)^{\infty} & \textcolor{red}{\textbf{1}} & (01)^{\ell'} & 0 & 0 & 1 & 0 & \cdots  \\
        (F_k)  & + &               & \textcolor{red}{\textbf{1}} &              &   &   &   &   & \\ \hline
        (x+F_k)& = & (00)^{\infty} & \textcolor{red}{\textbf{0}} & (10)^{\ell'} & 1 & 0 & 0 & 0 & \cdots
    \end{array}\]
\end{enumerate}
Finally, we consider cases where the whole prefix of $x$ is modified.
\begin{enumerate}[resume]
    \item If $k\ge 4$ and is even and $x=\overline{(10)^{\infty}\textcolor{red}{\textbf{1}}(01)^{\frac{k-2}{2}}}$ then 
    \[\begin{array}{ccccc}
        (x)     &   & (10)^{\infty} & \textcolor{red}{\textbf{1}} & (01)^{\frac{k-2}{2}} \\
        (F_k)   & + &               & \textcolor{red}{\textbf{1}} & \\ \hline
        (x+F_k) & = & (00)^{\infty} & \textcolor{red}{\textbf{0}} & (10)^{\frac{k-2}{2}}
    \end{array}\]
    \item If $k\ge 3$ and is odd and $x=\overline{(10)^{\infty}\textcolor{red}{\textbf{1}}(01)^{\frac{k-3}{2}}0}$ then 
    \[\begin{array}{cccccc}
        (x)     &   & (10)^{\infty} & \textcolor{red}{\textbf{1}} & (01)^{\frac{k-3}{2}} & 0 \\
        (F_k)   & + &               & \textcolor{red}{\textbf{1}} &                      & \\ \hline
        (x+F_k) & = & (00)^{\infty} & \textcolor{red}{\textbf{0}} & (10)^{\frac{k-3}{2}} & 1
    \end{array}\]
\end{enumerate}
We can sum up all these cases in the following proposition.

\begin{prop}\label{T^{F_n}}
The table below summarises the action of the addition of $F_k$ ($k\ge 2)$ on the digits of $x\in\XX$. Here $\ell,\ell'\in\NN$ and we represent in \textcolor{red}{\textbf{color}}, the digits at position $k$. The digits that are not explicitly written remain untouched by the operation.
\begin{align}
    T^{F_k}:    &\XX &\longrightarrow  \hspace{2mm}  & \XX \notag \\
                &\overline{\cdots 0\textcolor{red}{\textbf{\emph{0}}}0\cdots}& \longmapsto \hspace{2mm} & \overline{\cdots 0\textcolor{red}{\textbf{\emph{1}}}0\cdots} \\
                &\overline{\cdots 001(01)^{\ell}\textcolor{red}{\textbf{\emph{0}}}\cdots} & \longmapsto \hspace{2mm}  & \overline{\cdots 010(00)^{\ell}\textcolor{red}{\textbf{\emph{0}}}\cdots} \label{avancee1}\\
                &\overline{\cdots00(10)^{\ell}\textcolor{red}{\textbf{\emph{0}}}1\cdots}&\longmapsto \hspace{2mm} &\overline{\cdots 01(00)^{\ell}\textcolor{red}{\textbf{\emph{0}}}0\cdots} \label{avancee2}\\
                &\overline{\cdots00(10)^{\ell}\textcolor{red}{\textbf{\emph{1}}}(01)^{\ell'}000\cdots}&\longmapsto \hspace{2mm} &\overline{\cdots 01(00)^{\ell}\textcolor{red}{\textbf{\emph{0}}}(10)^{\ell'}010\cdots} \label{recul1} \\
                &\overline{\cdots 00(10)^{\ell}\textcolor{red}{\textbf{\emph{1}}}(01)^{\ell'}0010\cdots}&\longmapsto \hspace{2mm} &\overline{\cdots 01(00)^{\ell}\textcolor{red}{\textbf{\emph{0}}}(10)^{\ell'}1000\cdots} \label{recul2}\\
                &\overline{\cdots 00(10)^{\ell}\textcolor{red}{\textbf{\emph{1}}}(01)^{\frac{k-5}{2}}001}&\longmapsto \hspace{2mm} &\overline{\cdots 01(00)^{\ell}\textcolor{red}{\textbf{\emph{0}}}(10)^{\frac{k-5}{2}}100} \label{11}\\
                &\overline{\cdots 00(10)^{\ell}\textcolor{red}{\textbf{\emph{1}}}(01)^{\frac{k-4}{2}}00}&\longmapsto \hspace{2mm} &\overline{\cdots 01(00)^{\ell}\textcolor{red}{\textbf{\emph{0}}}(10)^{\frac{k-4}{2}}01}\label{12}\\
                &\overline{\cdots 00(10)^{\ell}\textcolor{red}{\textbf{\emph{1}}}(01)^{\frac{k-2}{2}}}&\longmapsto \hspace{2mm} &\overline{\cdots 01(00)^{\ell}\textcolor{red}{\textbf{\emph{0}}}(10)^{\frac{k-2}{2}}} \label{recul max} \\
                &\overline{\cdots 00(10)^{\ell}\textcolor{red}{\textbf{\emph{1}}}(01)^{\frac{k-3}{2}}0}&\longmapsto \hspace{2mm} &\overline{\cdots 01(00)^{\ell}\textcolor{red}{\textbf{\emph{0}}}(10)^{\frac{k-3}{2}}1}\label{17} \\
                &\overline{(01)^{\infty}\textcolor{red}{\textbf{\emph{0}}}\cdots}&\longmapsto \hspace{2mm} &\overline{0^{\infty}\textcolor{red}{\textbf{\emph{0}}}\cdots} \tag{7.bis} \label{avancee infinie}\\
                 &\overline{(10)^{\infty}\textcolor{red}{\textbf{\emph{0}}}1\cdots}&\longmapsto \hspace{2mm} &\overline{0^{\infty}\textcolor{red}{\textbf{\emph{0}}}0\cdots} \tag{8.bis} \\
                &\overline{(10)^{\infty}\textcolor{red}{\textbf{\emph{1}}}(01)^{\ell'}000\cdots}&\longmapsto \hspace{2mm} &\overline{0^{\infty}\textcolor{red}{\textbf{\emph{0}}}(10)^{\ell'}010\cdots} \tag{9.bis}\label{recul3}\\
                &\overline{(10)^{\infty}\textcolor{red}{\textbf{\emph{1}}}(01)^{\ell'}0010\cdots}&\longmapsto \hspace{2mm} &\overline{0^{\infty}\textcolor{red}{\textbf{\emph{0}}}(10)^{\ell'}1000\cdots} \tag{10.bis}\label{recul4}\\
                &\overline{(10)^{\infty}\textcolor{red}{\textbf{\emph{1}}}(01)^{\frac{k-5}{2}}001}&\longmapsto \hspace{2mm} &\overline{0^{\infty}\textcolor{red}{\textbf{\emph{0}}}(10)^{\frac{k-5}{2}}100}\tag{11.bis}\label{14.bis} \\
                &\overline{(10)^{\infty}\textcolor{red}{\textbf{\emph{1}}}(01)^{\frac{k-4}{2}}00}&\longmapsto \hspace{2mm} &\overline{ 0^{\infty}\textcolor{red}{\textbf{\emph{0}}}(10)^{\frac{k-4}{2}}01}\tag{12.bis}\label{15.bis} \\                
                &\overline{(10)^{\infty}\textcolor{red}{\textbf{\emph{1}}}(01)^{\frac{k-2}{2}}}&\longmapsto \hspace{2mm} &\overline{0^{\infty}\textcolor{red}{\textbf{\emph{0}}}(10)^{\frac{k-2}{2}}} \tag{13.bis}\label{16.bis}\\
                &\overline{(10)^{\infty}\textcolor{red}{\textbf{\emph{1}}}(01)^{\frac{k-3}{2}}0}&\longmapsto \hspace{2mm} &\overline{0^{\infty}\textcolor{red}{\textbf{\emph{0}}}(10)^{\frac{k-3}{2}}1} \tag{14.bis}\label{17.bis}
\end{align}
(We precise that $k$ is even in cases \eqref{12}, \eqref{recul max}, \eqref{15.bis} and \eqref{16.bis} and $k$ is odd in cases \eqref{11}, \eqref{17}, \eqref{14.bis} and \eqref{17.bis}. Also, in some cases, $k$ is considered sufficiently large so the operation is possible.)
\end{prop}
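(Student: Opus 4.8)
The plan is to read the displayed table as a bookkeeping device and to prove it by assembling the local computations already carried out in Subsections~\ref{HTAI} and~\ref{ZIHTAI}. Since $T$ is the odometer $x\mapsto x+1$ and $T^{F_k}$ denotes its $F_k$-fold iterate, we have $T^{F_k}(x)=x+F_k$, and this map is continuous on $\XX$ because $T$ is. The proof then splits into three tasks: (i) show that, once $k$ is fixed, the left-hand patterns listed partition $\XX$; (ii) verify that each rewriting is correct; and (iii) obtain the ``.bis'' lines from the ordinary ones by a limiting argument.

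For (i), fix $k$ and consider the word $w=(x_{k+1},x_{k+2},\dots)$ formed by the digits strictly above position $k$. As an element coming from $\XX$ it has no factor $\overline{11}$, so either $w$ contains a factor $\overline{00}$, or $w$ is purely alternating, i.e. $w=\overline{(10)^\infty}$ or $w=\overline{(01)^\infty}$. The first alternative produces the ordinary cases and the second the ``.bis'' cases. Inside each alternative I would refine according to the value of $x_k$: when $x_k=0$ the operation either triggers no carry (the base case $\overline{\cdots0\mathbf{0}0\cdots}$, which is insensitive to the prefix and hence needs no separate ``.bis'' line) or a purely upward carry (cases \eqref{avancee1} and \eqref{avancee2}); when $x_k=1$, the relation~\eqref{Fib_relat2} splits the carry and its downward part enters the block $\overline{(01)^{\ell'}}$ sitting just below position $k$, and the admissible tails following that block are exactly $\overline{000}$, $\overline{0010}$, or the four boundary tails reaching the unit digit, which is precisely the list \eqref{recul1}--\eqref{17}. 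This makes the partition exhaustive and disjoint.

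For (ii), rather than tracing every carry I would invoke the uniqueness of the Zeckendorf representation. In each ordinary case the two words differ only on a finite window bounded by the stopping patterns, so it suffices to check the finite identity (value of the left window)$\,+\,F_k=$(value of the right window) together with the fact that the right window contains no $\overline{11}$. That value identity reduces to iterating $F_{j}+F_{j+1}=F_{j+2}$ for the upward part of the carry and the splitting relation~\eqref{Fib_relat2}, $2F_k=F_{k+1}+F_{k-2}$, for the part that travels downward; being admissible, the resulting word is then forced to equal $x+F_k$ by uniqueness. For (iii), the ``.bis'' lines follow by continuity of $T^{F_k}$: replacing the leading $\overline{\cdots00}$ by $\overline{\cdots00(10)^{\ell}}$ and letting $\ell\to\infty$ sends each ordinary left-hand pattern to the corresponding alternating one and each right-hand pattern $\overline{\cdots01(00)^{\ell}}$ to $\overline{0^\infty}$, in accordance with $T(\overline{(10)^\infty})=T(\overline{(01)^\infty})=\overline{0^\infty}$.

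The delicate point, with no analogue in integer base, is the downward half of the carry created by~\eqref{Fib_relat2}. It runs through the block $\overline{(01)^{\ell'}}$, turning it into $\overline{(10)^{\ell'}}$, and its termination depends on the tail; the genuinely subtle subcase is \eqref{recul2} (and its companion \eqref{recul4}), where the downward carry meeting the pattern $\overline{0010}$ first bounces one step back upward before stopping. Keeping this two-directional propagation consistent with the simultaneous upward carry above position $k$, and checking that the boundary tails terminate correctly at the unit digit, is where the bookkeeping must be done most carefully; everything else is a routine application of the two Fibonacci identities above.
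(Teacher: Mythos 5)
Your proof is correct, and it rests on the same engine as the paper's: in the paper, Proposition~\ref{T^{F_n}} is explicitly a summary of the case-by-case additions carried out in Subsections~\ref{HTAI} and~\ref{ZIHTAI}, where each line is obtained by tracing the carry with the identities $F_{j}+F_{j+1}=F_{j+2}$ and \eqref{Fib_relat2}. You deviate in three useful ways. First, you prove exhaustiveness and disjointness of the left-hand patterns (split on whether the digits above position $k$ contain $00$ or are purely alternating, then on $x_k$, then on the admissible tails $000$, $0010$ and the four boundary tails), which the paper leaves implicit in its enumeration; your observation that the no-carry case needs no ``.bis'' companion, and that \eqref{recul2}, \eqref{recul4} are the delicate bounce-back subcases, matches the paper's analysis. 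Second, you certify each line by a finite-window value identity plus admissibility plus uniqueness of the Zeckendorf representation, instead of tracing carries; this buys a static check in place of a dynamical computation, though the value identity is verified by the same two Fibonacci relations. Third, you derive the ``.bis'' lines by continuity of $T^{F_k}$ and a limit $\ell\to\infty$, where the paper computes a second finite list of adic additions directly; your route is consistent with how the paper defines $T$ at $\overline{(10)^{\infty}}$ and $\overline{(01)^{\infty}}$. One point to make explicit: uniqueness of the Zeckendorf expansion, as invoked, applies to integers, whereas the ordinary lines concern arbitrary $x\in\XX$ whose digits above the stopping pattern $00$ may have infinite support; to conclude that digits outside the window are untouched for such $x$, run the same truncation-plus-continuity argument you use for the ``.bis'' lines (truncate $x$ above the window, apply the integer case, pass to the limit). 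Since that mechanism is already in your proof, this is a one-line addition rather than a gap.
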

    
    \subsection{Stopping conditions when adding an integer to an adic number}\label{subsection Focus}

    Through the cases described in Proposition~\ref{T^{F_n}}, we observe that if there is a $1$ at position $k$ in the expansion of $x$, the addition of $F_k$ yields a carry propagation in both directions:
\begin{itemize}
    \item to the left, modifying digits of higher indices (as in integer base) 
    \item to the right, modifying digits of lower indices.
\end{itemize}
The propagation (in both directions) happens through a maximal sequence of alternative $0$'s and $1$'s and is stopped at the first occurrence of two consecutive $0$'s. But the modifications depend on the propagation direction.
\begin{itemize}
    \item In the propagation to the left, the maximal subword of alternative $1$'s and $0$'s will be transformed into a subword of $0$'s of the same length (case \eqref{recul1} for instance), and the stopping pattern $00$ is transformed into $01$. We precise this propagation also happens if $x_k=0$ (case \eqref{avancee1} for instance).
    \item In the propagation to the right, which only happens if $x_k=1$, the maximal subword of alternative $1$'s and $0$'s is transformed into a symmetrical subword where the $1$'s become $0$'s and vice-versa (cases \eqref{recul1} and \eqref{recul2} for instance). Then the first occurrence of $00$ (in the sense the largest index $\le k$ such that the digits of $x$ are $00$) can either be part of the pattern $w_0:=\overline{01000}$ or $w_1:=\overline{10010}$. (We call $w_0$ and $w_1$ the \emph{right-stopping pattern}.) 
\end{itemize}
Depending on the right-stopping pattern ($\overline{01000}$ or $\overline{10010}$), the modifications of digits at these indices are given by the next scheme.
\begin{figure}[H]
    \centering
    \includegraphics[scale=0.35]{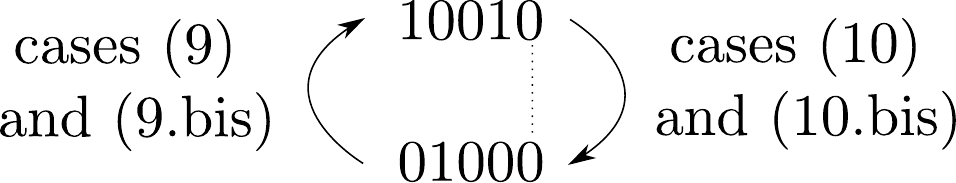}
    \caption{Modifications at the position of the right-stopping pattern.}
    \label{Scheme}
\end{figure}
Note that in both cases, we get a new right-stopping pattern at the same position as before the addition of $F_k$. This addition of $F_k$ to $x\in\XX$ modifies some digits at positions $\le k-2$ only if $x_k=1$, and, in this case, the modification of the digits takes place up to the first occurrence of one of the right-blocking patterns. 

Formally: let $x\in\XX$ and $k\ge 2$.
\begin{itemize}
    \item If $x_k=0$ then $(x+F_k)_n=x_n$ for all $n\le k-2$.
    \item If $x_k=1$ and if there exists $j\le k+1$ such that $x_jx_{j-1}\cdots x_{j-4}$ is a right-stopping patterns $w_i$ ($i\in\{0,1\})$, -we denote by $j'$ the largest index with this property- then 
    \begin{itemize}
        \item $(x+F_k)_n=x_n$ for all $n\le j'-4$
        \item $w_{1-i}$ appears in $(x+F_k)$ at the same position $j'$, unless $w_i=w_0$ and $j'=k+1$, in which case we might have $00010$ instead of $w_1$ at position $j'$ in $x+F_k$ (case~\eqref{recul1} with $\ell=\ell'=0$).
    \end{itemize}
\end{itemize}
A straightforward consequence is the following Proposition.
\begin{prop}
Assume that the right-stopping pattern $w_i$ ($i=0,1$) appears in $x\in\XX$ at position $j\ge 5$, that is: $x_jx_{j-1}\cdots x_{j-4}=w_i$. Let $k\ge j-1$. Then
\begin{itemize}
    \item for all $n\le j-4$, $(x+F_k)_n=x_n$ and
    \item $w_0$, $w_1$ or $00010$ appears in $x+F_k$ at some position $j'$ with $k+1\ge j'\ge j$.
\end{itemize}
\end{prop}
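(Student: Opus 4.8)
The plan is to read the Proposition off directly from the explicit ``stopping condition'' description established just above it (the two bulleted cases of the formal statement), after splitting according to the value of $x_k$. The only preliminary remark is that the hypothesis $k\ge j-1$ is equivalent to $j\le k+1$, which is exactly what is needed for the given pattern to lie within reach of the carry propagation triggered by the addition of $F_k$. The two conclusions of the Proposition will then be matched to the two bullets of the formal statement.

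First I would treat the case $x_k=1$, which is the substantial one. Since $j\le k+1$ and $x_j\cdots x_{j-4}=w_i$, the index $j$ is one of the indices $\le k+1$ carrying a right-stopping pattern, so the set of such indices is nonempty; let $j'$ denote the largest one, so that $j\le j'\le k+1$. Applying the formal statement at $j'$ gives $(x+F_k)_n=x_n$ for all $n\le j'-4$, and since $j'\ge j$ this yields in particular $(x+F_k)_n=x_n$ for $n\le j-4$, which is the first bullet. For the second bullet the formal statement asserts that a right-stopping pattern appears at position $j'$ in $x+F_k$, the sole exception being the borderline situation where the pattern at $j'$ equals $\overline{01000}$ and $j'=k+1$, in which case $\overline{00010}$ may appear instead (the configuration of case~\eqref{recul1} with $\ell=\ell'=0$). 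In every instance one of $\overline{01000}$, $\overline{10010}$, $\overline{00010}$ sits at $j'$ with $k+1\ge j'\ge j$, as required.

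Next I would dispose of the case $x_k=0$. Here the formal statement gives $(x+F_k)_n=x_n$ for all $n\le k-2$, and since $j\le k+1$ we have $j-4\le k-3\le k-2$, so the first bullet is immediate. For the second bullet, if $j\le k-2$ then the entire window $x_j\cdots x_{j-4}$ is left untouched and $w_i$ survives at $j'=j$. The remaining boundary values $j\in\{k-1,k,k+1\}$ are finite in number and are checked directly against those lines of Proposition~\ref{T^{F_n}} in which the digit at position $k$ stays equal to $0$, namely the cases~\eqref{avancee1} and \eqref{avancee2}; from these one reads off that the five-digit window at the relevant index is again one of $\overline{01000}$, $\overline{10010}$ or $\overline{00010}$, sitting at some $j'\ge j$.

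The routine part of the argument is precisely this bookkeeping against the table of Proposition~\ref{T^{F_n}}. The step demanding the most care will be pinning down the exact index $j'$ at which the pattern is regenerated and certifying the inclusion $k+1\ge j'\ge j$ — in particular reconciling the exceptional output $\overline{00010}$ with the genuine patterns $w_0,w_1$, and handling the borderline configurations when $x_k=0$. Once the position of the regenerated stopping pattern is located via Figure~\ref{Scheme}, both conclusions follow.
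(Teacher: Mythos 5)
Your case $x_k=1$ is fine and is exactly how the paper reads the Proposition off the formal bulleted statement preceding it (the paper offers no proof beyond calling it a straightforward consequence): taking $j'$ to be the largest index $\le k+1$ carrying a stopping pattern, both bullets follow, and $j'\ge j$ transfers the first bullet down to $n\le j-4$. The genuine gap is in your boundary analysis of the case $x_k=0$. First, a minor incompleteness: the lines of Proposition~\ref{T^{F_n}} relevant to $x_k=0$ are not only \eqref{avancee1} and \eqref{avancee2} but also the first line (where the digit at $k$ becomes $1$) and the infinite variants \eqref{avancee infinie} and (8.bis); these turn out to be benign, but your selection criterion excludes them. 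The serious problem is the subcase $w_0$ at $j=k$, which your read-off claim gets wrong. There, $x_k=0$ and $x_{k-1}=1$, so case \eqref{avancee2} applies: since $F_k+F_{k-1}=F_{k+1}$, the carry \emph{absorbs} the $1$ at position $k-1$ and deposits a $1$ at position $k+2\ell+1$, where $\ell\ge0$ is the length of the alternating run above $k$. The regenerated stopping pattern then sits at $j'=k+2\ell+2>k+1$, while the five-digit windows at positions $k$ and $k+1$ of $x+F_k$ read $00000$ and $00000$ (or $10000$ when $\ell=0$) --- none of $w_0$, $w_1$, $00010$. Concretely: $x=F_6=8=\overline{10000}$ carries $w_0$ at $j=7$, and $k=7$ satisfies $k\ge j-1$; yet $x+F_7=21=F_8=\overline{1000000}$ has its unique stopping pattern at $j'=9=k+2$, and no admissible pattern at $j'\in\{7,8\}$. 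So the step ``one reads off that the five-digit window at the relevant index is again one of $\overline{01000}$, $\overline{10010}$ or $\overline{00010}$'' is false as stated, and your proof cannot close in this configuration.

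It is worth saying that this is not merely a bookkeeping slip on your side: the Proposition's second bullet is itself too strong in exactly this configuration, which the paper glosses over by giving no proof. What survives at position $j'=k$ in the example above is only the two-digit pattern $00$, and that weaker disjunction (``either $w_0$ or $w_1$ appears at position $j'$, or $00$ appears at position $j'$'') is precisely what the paper actually invokes in its one downstream use, the proof of Corollary~\ref{blocage recul}. Your case analysis does establish that weaker statement in every branch, including the problematic one. So to repair the argument you should either prove the Proposition with the second bullet weakened to the disjunction used in Corollary~\ref{blocage recul}, or treat the configuration $w_0$ at $j=k$ (and its infinite analogue (8.bis)) separately, conceding that the regenerated pattern may land at $j'=k+2\ell+2>k+1$.
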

We now state the following corollary that enhances that property when we add not only a Fibonacci term but an integer whose expansion involves Fibonacci numbers of high indices.
\begin{corollaire}\label{blocage recul}
    Let $x\in\XX$. Assume that, for some $\ell\ge 2$, $x_{\ell}=x_{\ell+1}=0$. Let $r\in\NN$ be such that $r_j=0$ for $j=2,\cdots,\ell+1$.
    Then, for each $n\le\ell-2$, we have $(x+r)_n=x_n$.
\end{corollaire}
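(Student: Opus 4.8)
The plan is to write $r$ in its Zeckendorf expansion $r=F_{k_1}+\cdots+F_{k_m}$ with $\ell+2\le k_1<k_2<\cdots<k_m$ and $k_{i+1}\ge k_i+2$ (the hypothesis $r_j=0$ for $j\le\ell+1$ is exactly the statement that every term has index $\ge\ell+2$), and to add these terms to $x$ one at a time, setting $x^{(0)}:=x$ and $x^{(i)}:=x^{(i-1)}+F_{k_i}$, so that $x+r=x^{(m)}$. I will track how the pair of consecutive zeros $x_\ell=x_{\ell+1}=0$ prevents the rightward carry propagation from reaching positions $\le\ell-2$. The cases $\ell\in\{2,3\}$ make the conclusion vacuous (there is no index $2\le n\le\ell-2$), so I may assume $\ell\ge4$; then every position considered is $\ge2$ and the right-stopping patterns, which span five positions down to $j-4\ge\ell-2\ge2$, are well defined.

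First I would analyse a single addition $y+F_k$ under the hypothesis $y_\ell=y_{\ell+1}=0$ and $k\ge\ell+2$, splitting on the digit $y_k$. If $y_k=0$, the cases of Proposition~\ref{T^{F_n}} with digit $0$ at position $k$ modify only digits of index $\ge k-1$, and those that actually reach index $k-1$ (such as \eqref{avancee2}) require $y_{k-1}=1$; since $y_{\ell+1}=0$ this cannot occur when $k-1=\ell+1$, while for $k\ge\ell+3$ position $\ell+1$ is anyway untouched. Hence all digits of index $\le\ell+1$ are preserved and $(y+F_k)_\ell=(y+F_k)_{\ell+1}=0$ again. If $y_k=1$, the carry propagates to the right and, as described in Subsection~\ref{subsection Focus}, stops at the first occurrence of $00$ below position $k$. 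Since $y_{\ell+1}=y_\ell=0$ is itself such an occurrence and $\ell+1\le k-1$, this first $00$ sits at positions $(p+1,p)$ with $p\ge\ell$, so the associated right-stopping pattern $w_i$ lies at a position $j'\ge\ell+2$ (namely $j'=p+3$ if $w_i=w_0$ and $j'=p+2$ if $w_i=w_1$). The formal statement of Subsection~\ref{subsection Focus} then gives $(y+F_k)_n=y_n$ for all $n\le j'-4$, with $j'-4\ge\ell-2$, and a right-stopping pattern reappears in $y+F_k$ at this same position $j'\ge\ell+2$.

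With this in hand I would run the induction in two phases. As long as $x^{(i-1)}_{k_i}=0$, the first case keeps $(x^{(i)})_\ell=(x^{(i)})_{\ell+1}=0$ and all digits of index $\le\ell+1$ equal to those of $x$, so the hypothesis is reproduced for the next term. At the first index $i$ for which $x^{(i-1)}_{k_i}=1$, the second case produces a right-stopping pattern in $x^{(i)}$ at a position $j\ge\ell+2$ while preserving all digits of index $\le\ell-2$. From then on I no longer need the digit at the addition point: the Proposition stated just before this corollary applies to every later term, because the pattern position $j$ satisfies $j\le(\text{last added index})+1\le k_{i'}-1$ (using $k_{i'}\ge(\text{last added index})+2$) and $j\ge\ell+2\ge5$; it guarantees that digits of index $\le j-4\ge\ell-2$ are preserved and that a right-stopping pattern reappears at a position $\ge j\ge\ell+2$. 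The invariant ``$(x^{(i)})_n=x_n$ for all $n\le\ell-2$, and a right-stopping pattern sits at a position $\ge\ell+2$'' is thus preserved throughout, and combining the two phases gives $(x+r)_n=x^{(m)}_n=x_n$ for every $n\le\ell-2$. If $x$ ends in $\overline{(10)^\infty}$ or $\overline{(01)^\infty}$, only the leftward propagation runs to infinity; the rightward propagation governing the low digits is unaffected, and the ``.bis'' cases of Proposition~\ref{T^{F_n}} behave identically on the right, so the argument is unchanged.

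The delicate point is the bookkeeping of the iteration rather than any single computation. The naive invariant ``two consecutive zeros survive near position $\ell$'' is too weak, because the raw $00$ can slide down by one position when $w_0$ and $w_1$ are interchanged, and a $00$ at $(\ell,\ell-1)$ would only protect digits of index $\le\ell-3$. The correct quantity to monitor is the \emph{position} $j$ of the right-stopping pattern, which the preceding Proposition guarantees never decreases, so the protected threshold $j-4\ge\ell-2$ never deteriorates. Matching this monotonicity exactly to the index $\ell-2$ (and not $\ell-1$), through the extremal pattern position $j=\ell+2$, is the crux, and it is also what explains why the conclusion stops precisely at $\ell-2$.
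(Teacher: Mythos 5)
Your proposal follows essentially the same route as the paper's proof: decompose $r$ into its Fibonacci summands taken in increasing order of index, analyse each single addition via the carry description of Subsection~\ref{subsection Focus} (locating the topmost occurrence of $00$ at positions $(p+1,p)$ with $p\ge\ell$, whence a right-stopping pattern at $j'=p+3$ for $w_0$ or $j'=p+2$ for $w_1$, so $j'\ge\ell+2$), and then induct using the Proposition preceding the corollary, whose monotonicity of the pattern position keeps the protected threshold $j-4\ge\ell-2$ from deteriorating. Your bookkeeping is in fact more explicit than the paper's terse induction (the two-phase structure, the vacuous cases $\ell\in\{2,3\}$, the tails $\overline{(10)^{\infty}}$ and $\overline{(01)^{\infty}}$), and the single-step computations are correct.

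There is, however, one loose joint. You quote the Proposition as guaranteeing that ``a right-stopping pattern reappears at a position $\ge j$'', but its actual conclusion is that $w_0$, $w_1$ \emph{or the degenerate word} $\overline{00010}$ appears at some $j'$ with $k+1\ge j'\ge j$; the degenerate outcome genuinely occurs (take case~\eqref{recul1} with run-length parameters $\ell\ge1$ and $\ell'=0$, the initial $w_0$ sitting at $j'=k+1$). When $\overline{00010}$ shows up, your phase-two invariant fails as stated, and the next invocation of the Proposition is not licensed, since its hypothesis requires a genuine $w_i$. The repair is immediate and already contained in your own single-step analysis: $\overline{00010}$ at position $j'$ places two consecutive zeros at positions $(j',j'-1)$ with $j'-1\ge\ell+1$, so the corollary's hypothesis is re-established with a strictly larger baseline and the induction restarts from there with protection of all digits of index $\le j'-3\ge\ell-1$; note that, unlike the $w_0$ case you worry about in your last paragraph, the $00$ inside $\overline{00010}$ sits at the \emph{top} of the word, so re-basing on it loses nothing. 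This is exactly how the paper's proof handles the point, via its explicit branch ``or $00$ appears at position $j'$ in $x+F_{k_1}$, and $j'\ge\ell+3$''. Weaken your invariant to ``either a right-stopping pattern at position $\ge\ell+2$, or an occurrence of $00$ whose lower index is $\ge\ell+1$'', and your argument is complete.
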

\begin{proof}
    We are considering the following addition:
    \[
    \begin{array}{ccccccccccc}
        (x)     &       & \cdots    & x_{\ell+3}    & x_{\ell+2}    & 0     & 0     & x_{\ell-1}    & x_{\ell-2}    & x_{\ell-3}    & \cdots   \\
        (r)     & +     & \cdots    & r_{\ell+3}    & r_{\ell+2}    & 0     & 0     & 0             & 0             & 0             & \cdots  \\ \hline
        (x+r)   & =     & \cdots    & \star         & \star         & \star & \star & \star         & x_{\ell-2}    & x_{\ell-3}    & \cdots 
    \end{array}
    \]
    Let $r=F_{k_{s(r)}}+\cdots +F_{k_1}$ be the Zeckendorf decomposition of $r$ with $k_{s(r)}>\cdots>k_2>k_1\ge \ell+2$.
    We first consider the addition of $F_{k_1}$ to $x$:
    \begin{itemize}
        \item if, for all $j$ such that $k_1\ge j\ge \ell$, we have $x_j=0$, then we have for every $n\le\ell+1$, $(x+F_{k_1})_n=x_n$ (in particular $00$ appears at the same place in $x+F_{k_1}$)
        \item otherwise, there exists a largest integer $j'$ with $k_1+1\ge j'\ge \ell+2$, such that one of the right-stopping pattern appears in $x$ at position $j'$. We can then apply the above proposition which proves that 
        \begin{itemize}
            \item either $w_0$ or $w_1$ appears at position $j'$ in $x+F_{k_1}$
            \item or $00$ appears at position $j'$ in $x+F_{k_1}$, and $j'\ge \ell+3$
        \end{itemize}
        In each case, we still have $(x+F_{k_1})_n=x_n$ for $n\le\ell-2$.
    \end{itemize}
    Then we prove by induction on $t$ such that for each $t$, $1\le t\le s(r)$, the above is true for $x+F_{k_1}+\cdots+F_{k_t}$.
\end{proof}

The following lemma ensures that the pattern $00$ is a left-stopping condition: it stops the propagation of a carry coming from the right.
\begin{lemme}\label{blocage_avancee}
    Let $x\in\XX$, $r\in\NN$. Let $\ell\ge 2$ be such that $r<F_{\ell+1}$ and assume $x_{\ell+2}=x_{\ell+3}=0$. Then, for all $k\ge \ell+3$, we have $(x+r)_k=x_k$. 
\end{lemme}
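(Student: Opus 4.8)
The plan is to reduce the statement to a purely finite, integer-arithmetic claim about Zeckendorf representations, and then transport it to the adic setting by continuity of the odometer. First I would isolate the following finite version: if $n\in\NN$ satisfies $n_{\ell+2}=n_{\ell+3}=0$ and $r<F_{\ell+1}$, then $(n+r)_k=n_k$ for every $k\ge\ell+3$. Granting this, the lemma follows at once: writing $x^{(N)}:=\overline{x_N\cdots x_2}\in\NN$ for the truncation of $x$, the hypothesis $x_{\ell+2}=x_{\ell+3}=0$ gives $x^{(N)}_{\ell+2}=x^{(N)}_{\ell+3}=0$ for all $N\ge\ell+3$, so the finite claim yields $(x^{(N)}+r)_k=x^{(N)}_k$ for $k\ge\ell+3$. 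Since $T$ restricts to the successor map on $\NN$ and is continuous, $T^r$ is continuous and $x^{(N)}\to x$ in the product topology forces $x^{(N)}+r=T^r(x^{(N)})\to T^r(x)=x+r$ coordinatewise; for a fixed $k\ge\ell+3$ and every $N\ge k$ the coordinate on the left already equals $x_k$, so the limit $(x+r)_k$ equals $x_k$.

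The core is therefore the finite claim, which I would prove by splitting the digit string of $n$ at the $00$ buffer. Set $n^{low}:=\sum_{k=2}^{\ell+1}n_kF_k$ and $n^{high}:=\sum_{k\ge\ell+4}n_kF_k$; since $n_{\ell+2}=n_{\ell+3}=0$ these account for all of $n$, so $n=n^{high}+n^{low}$ and hence $n+r=n^{high}+(n^{low}+r)$. As $n^{low}$ is represented with indices $\le\ell+1$ only, it is its own Zeckendorf expansion, whence $n^{low}<F_{\ell+2}$; combined with $r<F_{\ell+1}$ the Fibonacci recurrence gives
\[n^{low}+r< F_{\ell+2}+F_{\ell+1}=F_{\ell+3}.\]
Thus $m:=n^{low}+r$ has $m_j=0$ for all $j\ge\ell+3$. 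Now the candidate digit string $z$ defined by $z_k=n_k$ for $k\ge\ell+4$, by $z_{\ell+3}=0$, and by $z_j=m_j$ for $j\le\ell+2$ has no two consecutive $1$'s — the two blocks are individually admissible and are separated by the forced $0$ at position $\ell+3$ — and its value is $n^{high}+m=n+r$. By uniqueness in Zeckendorf's theorem, $z$ is the expansion of $n+r$, and reading off its coordinates gives $(n+r)_k=n_k$ for all $k\ge\ell+3$.

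I expect the only genuine subtlety to be the adic nature of $x$: one cannot speak of the ``value'' of an infinite Z-adic integer, so the uniqueness argument must be carried out at the finite (truncated) level and then pushed through the limit, which is exactly what continuity of $T^r$ provides. The arithmetic itself is routine once one notices that the hypothesis $r<F_{\ell+1}$ is calibrated precisely so that $n^{low}+r$ stays below $F_{\ell+3}$: this is what guarantees a $0$ survives at position $\ell+3$ and prevents the leftward carry from ever reaching the digits of index $\ge\ell+3$. An alternative, more hands-on route would add $r$ one Fibonacci term at a time and track the left-stopping pattern $00$ directly through the cases of Proposition~\ref{T^{F_n}}, but the decomposition above avoids the bookkeeping of how the buffer might shift during intermediate additions.
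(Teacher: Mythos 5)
Your proof is correct and rests on exactly the same key observation as the paper's: since $r<F_{\ell+1}$ forces the digits of $r$ to live at indices $\le\ell$, the low part satisfies $\overline{x_{\ell+1}\cdots x_2}+r<F_{\ell+2}+F_{\ell+1}=F_{\ell+3}$, so the carry cannot reach index $\ell+3$. The paper states this inequality and simply displays the resulting addition, whereas you formalize the same idea via the split $n=n^{high}+n^{low}$, uniqueness of the Zeckendorf expansion of the concatenated string, and a truncation-plus-continuity passage to the adic setting --- a more detailed write-up of the argument the paper leaves implicit, not a different route.
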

\begin{proof}
The assumption $r<F_{\ell+1}$, implies $r=\overline{r_{\ell}\cdots r_2}$ with $(r_{\ell},\cdots,r_2)\in\XX_f$ . Since $\overline{x_{\ell+1}\cdots x_2}+r<F_{\ell+2}+F_{\ell+1}=F_{\ell+3}$, a carry cannot propagate on digits of indices $\ge \ell+3$: we have the addition 
\[\begin{array}{cccccccccc}
    (x)  &   & \cdots & x_{\ell+4} & 0 & 0              & x_{\ell+1}   & x_{\ell}    & \cdots & x_2 \\
    (r)  & + &        &            &   &                &              & r_{\ell}    & \cdots & r_2 \\ \hline
    (x+r)& = & \cdots & x_{\ell+4} & 0 & (x+r)_{\ell+2} &(x+r)_{\ell+1}& (x+r)_{\ell}& \cdots & (x+r)_2
\end{array}\]
\end{proof}
The next lemma enhances the previous one: it shows that, given $x$ with some restrictions, right-stopping patterns can appear in the expansion $x+F_k$ when $k$ is a small integer. 
\begin{lemme}\label{blocage avancee + motifs}
Let $x\in\XX$ and $r\in\NN$. Let $\ell\ge 2$ be such that $x_{\ell+1}=x_{\ell+2}=x_{\ell+3}=x_{\ell+4}=0$ and $r_{\ell+2}=1$. We also suppose $r<F_{\ell+3}$. Then, for $k=1,2,3,4$
\[(x+r)_{\ell+k}=r_{\ell+k}\]
or
\[(x+r)_{\ell+1}=0 \textrm{ and } (x+r)_{\ell+3}=1.\]
\end{lemme}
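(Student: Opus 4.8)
The plan is to split the addition into a single ``Fibonacci flip'' followed by the addition of a small integer, and then to reduce the whole question to reading off three Zeckendorf digits of one ordinary integer. Since $r_{\ell+2}=1$, Zeckendorf validity forces $r_{\ell+1}=0$, and $r<F_{\ell+3}$ forces all digits of $r$ above position $\ell+2$ to vanish; hence $r=F_{\ell+2}+r'$ with $r'=\overline{r_\ell\cdots r_2}<F_{\ell+1}$. I would then write $x+r=(x+F_{\ell+2})+r'$ (legitimate because adding an integer on $\XX$ is a power of $T$ and powers of $T$ commute) and first compute $z:=x+F_{\ell+2}$. Because $x_{\ell+1}=x_{\ell+2}=x_{\ell+3}=0$, the pattern around position $\ell+2$ is $0\mathbf{0}0$, so the first line of Proposition~\ref{T^{F_n}} applies and $z$ is obtained from $x$ by the single flip $z_{\ell+2}=1$, all other digits unchanged; in particular $z_{\ell+1}=z_{\ell+3}=z_{\ell+4}=0$ and $z_j=x_j$ for $j\le\ell$.

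Next I would control the carry produced by adding $r'$ to $z$. Applying Lemma~\ref{blocage_avancee} with parameter $\ell+1$ (its hypotheses $z_{(\ell+1)+2}=z_{(\ell+1)+3}=0$ and $r'<F_{(\ell+1)+1}=F_{\ell+2}$ both hold) yields $(z+r')_k=z_k$ for every $k\ge\ell+4$. In particular $(x+r)_{\ell+4}=z_{\ell+4}=0$, and all digits of $x+r$ at positions $\ge\ell+4$ coincide with those of $z$. Subtracting this common high part, the digits of $x+r$ at positions $2,\dots,\ell+3$ are exactly the Zeckendorf expansion of
\[
w:=\overline{z_{\ell+3}\cdots z_2}+r'=\big(F_{\ell+2}+\overline{x_\ell\cdots x_2}\big)+r'=\overline{x_\ell\cdots x_2}+r,
\]
which is consistent because the bounds below give $w<F_{\ell+4}$, so this expansion indeed lives on positions $\le\ell+3$ and produces no carry into position $\ell+4$.

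It then remains to read $w_{\ell+1},w_{\ell+2},w_{\ell+3}$. From $\overline{x_\ell\cdots x_2}<F_{\ell+1}$ and $F_{\ell+2}\le r<F_{\ell+3}$ I obtain $F_{\ell+2}\le w<F_{\ell+1}+F_{\ell+3}<F_{\ell+4}$, and two cases cover this range. If $w<F_{\ell+3}$, then $w_{\ell+3}=0$, greediness gives $w_{\ell+2}=1$, and $w-F_{\ell+2}<F_{\ell+1}$ forces $w_{\ell+1}=0$; together with $(x+r)_{\ell+4}=0$ this means $x+r$ reads $0,1,0,0$ at positions $\ell+1,\dots,\ell+4$, i.e. agrees with $r$ there (the first alternative). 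If instead $F_{\ell+3}\le w$, then $w_{\ell+3}=1$ and $w-F_{\ell+3}<F_{\ell+1}$ forces $w_{\ell+2}=w_{\ell+1}=0$, so $(x+r)_{\ell+1}=0$ and $(x+r)_{\ell+3}=1$ (the second alternative). This exhausts the possibilities for $w$ and proves the dichotomy.

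The step I expect to be most delicate is the carry-containment of the second paragraph: although $z_{\ell+2}=1$ would normally let a carry escape the window to the left, the double zero $z_{\ell+3}=z_{\ell+4}=0$ is precisely what allows Lemma~\ref{blocage_avancee} to be invoked, and one must take care that it is the \emph{shifted} index $\ell+1$ (not $\ell$) that matches the hypotheses, since $z_{\ell+2}\neq0$. The quantitative input that makes everything fit is the sharp estimate $w<F_{\ell+1}+F_{\ell+3}<F_{\ell+4}$, which simultaneously confines the expansion of $w$ to positions $\le\ell+3$ and, in the second case, forces $w_{\ell+2}=w_{\ell+1}=0$; checking these Fibonacci inequalities is routine but is where the hypothesis $r<F_{\ell+3}$ is genuinely used.
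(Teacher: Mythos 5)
Your proof is correct, and each estimate checks out: $r'=r-F_{\ell+2}<F_{\ell+1}$, the flip $z=x+F_{\ell+2}$ is licensed by the first case of Proposition~\ref{T^{F_n}} since $x_{\ell+1}=x_{\ell+2}=x_{\ell+3}=0$, Lemma~\ref{blocage_avancee} applies with the shifted parameter $\ell+1$, and the dichotomy $w<F_{\ell+3}$ versus $w\ge F_{\ell+3}$, under $F_{\ell+2}\le w<F_{\ell+1}+F_{\ell+3}<F_{\ell+4}$, yields exactly the two alternatives. The paper's proof has the same skeleton --- freeze the digits at indices $\ge\ell+4$, then read positions $\ell+1,\ell+2,\ell+3$ off the integer sum $w=\overline{x_\ell\cdots x_2}+r$ --- but it adds all of $r$ in one step: it invokes Lemma~\ref{blocage_avancee} directly for the containment $(x+r)_k=x_k$ for $k\ge\ell+4$, and then splits cases by first showing, by contradiction from $r\ge F_{\ell+2}$, that at least one of $(x+r)_{\ell+2}$, $(x+r)_{\ell+3}$ equals $1$, handling the second case with the same inequality $w-F_{\ell+3}<F_{\ell+1}$ that you use. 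Your preliminary decomposition $r=F_{\ell+2}+r'$ is the one genuine difference, and it buys something real: as you anticipated, the paper's direct citation of Lemma~\ref{blocage_avancee} does not literally match that lemma's hypotheses (with parameter $\ell+1$ one would need $r<F_{\ell+2}$, which fails since $r_{\ell+2}=1$; with parameter $\ell+2$ one would need $x_{\ell+5}=0$, which is not assumed), so the paper implicitly reruns the lemma's proof argument, whereas after your flip the hypotheses $z_{\ell+3}=z_{\ell+4}=0$ and $r'<F_{\ell+2}$ hold on the nose. Your size split on $w$ is equivalent to the paper's digit split, merely phrased before rather than after extracting the Zeckendorf expansion, so apart from the cleaner carry-containment your route and the paper's coincide.
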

\begin{proof}
    We are actually considering the following addition
\[    \begin{array}{ccccccccccc}
        (x) &   & \cdots & x_{\ell+5}   & 0  & 0  & 0   & 0 & x_{\ell} &  \cdots    & x_2 \\
         (r)& + & \cdots &              &    &    & 1   & 0 & r_{\ell} & \cdots     & r_2
    \end{array}
 \]
We want to show that the expansion of $x+r$ is either
   \begin{align}\label{case 1}
      \overline{\cdots x_{\ell+5}0010(x+r)_{\ell}\cdots(x+r)_2} \tag{C1}
  \end{align}
 or
  \begin{align}\label{case 2}
      \overline{\cdots x_{\ell+5}0100(x+r)_{\ell}\cdots(x+r)_2}. \tag{C2}
 \end{align}
 Thanks to Lemma~\ref{blocage_avancee}, we have $(x+r)_k=x_k$ for every $k\ge\ell+4$ which means that the digits of $x+r$ of indices $\le \ell+3$ are given by the addition $r+\overline{x_{\ell}\cdots x_2}$.
 Now, we claim
 \[(x+r)_{\ell+3} \textrm{ or } (x+r)_{\ell+2} \textrm{ is }1.\]
 Indeed, if it is not the case then $\overline{x_{\ell}\cdots x_2}+r<F_{\ell+2}$ while $r\ge F_{\ell+2}$ (since $r_{\ell+2}=1$).
 We consider now both possibilities.
 \begin{itemize}
     \item If $(x+r)_{\ell+2}=1$ then we obtain \eqref{case 1}.
     \item If $(x+r)_{\ell+3}=1$ then we must have $(x+r)_{\ell+1}=0$ since we have
     \begin{align*}
         \overline{x_{\ell}\cdots x_2}+r-F_{\ell+3}&=\overline{x_{\ell}\cdots x_2}+\overline{r_{\ell}\cdots r_2}+F_{\ell+2}-F_{\ell+3}\\
         &< F_{\ell+1}.
     \end{align*}
     Thus, the expansion of $x+r$ is \eqref{case 2}.
 \end{itemize}
\end{proof}

We deduce the next corollary.
\begin{corollaire}\label{stopping condition bloc 1}
Let $x\in\XX$ and $r\in\NN$ such that it exists $\ell\ge 2$ with $x_{\ell}=\cdots=x_{\ell+5}=0$, $r_{\ell+1}=r_{\ell+5}=0$ and $r_{\ell+3}=1$. Denote $\widetilde{r}:=\overline{r_{\ell}\cdots r_2}$. Then, for any $k\ge \ell+4$
    \begin{align}\label{ccl 1 pour bloc de taille 1}
      (x+\widetilde{r}+F_{\ell+3})_k=(x+\widetilde{r})_k=x_k  
    \end{align}
    and, for any $k\le \ell+2$
    \begin{align}\label{ccl2 pour bloc de taille 1}
      (x+r)_k=(x+\widetilde{r}+F_{\ell+3})_k=(x+\widetilde{r})_k.  
    \end{align}
\end{corollaire}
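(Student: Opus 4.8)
The plan is to split $r$ into three pieces that act on $x$ one after another, using the six consecutive zeros $x_\ell=\cdots=x_{\ell+5}=0$ as a two-sided buffer. Since $r_{\ell+3}=1$ forces $r_{\ell+2}=r_{\ell+4}=0$, and $r_{\ell+1}=r_{\ell+5}=0$ by hypothesis, the term $F_{\ell+3}$ sits in $r$ in isolation: writing $\widetilde r=\overline{r_\ell\cdots r_2}$ (supported on positions $\le\ell$) and $\hat r:=\sum_{k\ge\ell+6}r_kF_k$ (supported on positions $\ge\ell+6$), we get the genuine carry-free decomposition $r=\widetilde r+F_{\ell+3}+\hat r$. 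Because adding an integer amounts to iterating $T$, this yields $x+r=\big((x+\widetilde r)+F_{\ell+3}\big)+\hat r$, and it suffices to track the digits through these three additions.

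First I would handle the addition of $\widetilde r$. The point is to show $(x+\widetilde r)_k=x_k$ for every $k\ge\ell+2$, so that $y:=x+\widetilde r$ still satisfies $y_{\ell+2}=y_{\ell+3}=y_{\ell+4}=y_{\ell+5}=0$. Since $x_\ell=x_{\ell+1}=0$, the low part $\overline{x_{\ell+1}\cdots x_2}$ is $<F_\ell$, while $\widetilde r<F_{\ell+1}$; hence $\overline{x_{\ell+1}\cdots x_2}+\widetilde r<F_{\ell+2}$, so this sum occupies only positions $\le\ell+1$. Combined with the buffer $x_{\ell+2}=\cdots=x_{\ell+5}=0$ separating it from the part of $x$ supported on positions $\ge\ell+6$, no carry can reach position $\ell+2$; this gives the claim, and in particular the second equality of \eqref{ccl 1 pour bloc de taille 1} for $k\ge\ell+4$. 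One can phrase this through Lemma~\ref{blocage_avancee}, but it is the size estimate that actually forces $y_{\ell+2}=0$.

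Next I would add $F_{\ell+3}$ to $y$. Having arranged $y_{\ell+2}=y_{\ell+3}=y_{\ell+4}=0$, the local configuration at position $\ell+3$ is $\overline{0\,\mathbf 0\,0}$, so we are exactly in the first line of Proposition~\ref{T^{F_n}}: adding $F_{\ell+3}$ merely turns the digit at $\ell+3$ from $0$ into $1$ and leaves every other digit untouched. Hence $z:=x+\widetilde r+F_{\ell+3}$ agrees with $y$ off position $\ell+3$, which immediately gives the first equality of \eqref{ccl 1 pour bloc de taille 1} (for $k\ge\ell+4$), the second equality of \eqref{ccl2 pour bloc de taille 1} (for $k\le\ell+2$), and, together with the previous step, the second equality of \eqref{ccl 1 pour bloc de taille 1}. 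Finally I would add $\hat r$ to $z$: from the previous step $z_{\ell+4}=z_{\ell+5}=0$, while $\hat r_j=0$ for $j=2,\dots,\ell+5$, so Corollary~\ref{blocage recul} with the barrier at $\ell+4,\ell+5$ gives $(z+\hat r)_k=z_k$ for all $k\le\ell+2$, which is the remaining first equality of \eqref{ccl2 pour bloc de taille 1}.

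The main obstacle is the first step: everything hinges on the low addition not climbing above position $\ell+1$, so that the whole four-zero buffer $y_{\ell+2}=\cdots=y_{\ell+5}=0$ survives. If instead $y_{\ell+2}$ were $1$, then adding $F_{\ell+3}$ would fall into case~\eqref{avancee2} and propagate a carry up to position $\ell+4$, breaking \eqref{ccl 1 pour bloc de taille 1}; it is precisely the combination $x_\ell=x_{\ell+1}=0$ with $\widetilde r<F_{\ell+1}$ that rules this out.
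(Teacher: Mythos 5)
Your proof is correct and follows essentially the same route as the paper's: the same carry-free decomposition $r=\widetilde r+F_{\ell+3}+\hat r$, the same three-stage addition controlled by Lemma~\ref{blocage_avancee} (for the low part), case~1 of Proposition~\ref{T^{F_n}} (for $F_{\ell+3}$), and Corollary~\ref{blocage recul} (for the high part). Your explicit size estimate $\overline{x_{\ell+1}\cdots x_2}+\widetilde r<F_\ell+F_{\ell+1}=F_{\ell+2}$ is a welcome touch, since Lemma~\ref{blocage_avancee} as stated only freezes digits of indices $\ge\ell+3$ while the paper asserts (and needs) invariance from index $\ell+2$ on.
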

\begin{remarque}\label{rem stop cdts 1}
The hypothesis means that we are considering the following addition 
\[\begin{array}{ccccccccccccc}
    (x) &  & \cdots & x_{\ell+6} & 0 & 0 & 0 & 0 & 0 & 0      & x_{\ell-1}  & \cdots    & x_2  \\
    (r) & +& \cdots & r_{\ell+6} & 0 & 0 & 1 & 0 & 0 & r_\ell     & \cdots  & \cdots    & r_2 \\
\end{array}\]
and the conclusion ensures that the digits of indices $\le \ell+2$ (\textrm{i.e.} those on the right-hand side of the pattern we have imposed) of $x+r$ are the same if we compute this previous addition as if we compute 
\[\begin{array}{ccccccccccccc}
    (x)                     &  & \cdots & x_{\ell+6}   & 0 & 0 & 0 & 0 & 0 & 0      & x_{\ell-1}  & \cdots    & x_2  \\
    (\widetilde{r}+F_{\ell+3}) & +&        &           &   &   & 1 & 0 & 0 & r_\ell     & \cdots  & \cdots    & r_2 \\
\end{array}\]
or if we compute
\[\begin{array}{ccccccccccccc}
    (x)            &  & \cdots & x_{\ell+6}    & 0 & 0 & 0 & 0 & 0 & 0      & x_{\ell-1}  & \cdots    & x_2  \\
    (\widetilde{r})& +&        &            &   &   &   &   &   & r_\ell     & \cdots  & \cdots    & r_2 \\
\end{array}\]

In other words, the corollary states that the conditions we put on $x$ and $r$ stop the propagation of carries in both direction. 
\end{remarque}
\begin{proof}
    First, we have the following addition
    \[\begin{array}{ccccccccccccc}
    (x)                &  & \cdots & x_{\ell+6}    & 0   & 0 & 0 & 0 & 0                         & 0     & x_{\ell-1}   & \cdots    & x_2  \\
    (\widetilde{r})    & +&       &             &     &   &   &   &                           & r_\ell   & \cdots    & \cdots    & r_2 \\ \hline
    (x+\widetilde{r})  & =& \cdots&  x_{\ell+6}    & 0  & 0 & 0 & 0 & (x+\widetilde{r})_{\ell+1}   & \cdots& \cdots    & \cdots    & (x+\widetilde{r})_2 
\end{array}\]
    Indeed, due to Lemma~\ref{blocage_avancee}, we have that $(x+\widetilde{r})_j=x_j$ for every $j\ge \ell+2$. Then, we add $F_{\ell+3}$, it gives $x+\widetilde{r}+F_{\ell+3}$ whose Zeckendorf expansion is
    \[(x+\widetilde{r}+F_{\ell+3})=\overline{\cdots x_{\ell+6}0010(x+\widetilde{r})_{\ell+1}\cdots(x+\widetilde{r})_2}.\]
    We thus get the relation~\eqref{ccl 1 pour bloc de taille 1}.
    
    We now consider the addition (denoting $\widetilde{x}:=x+\widetilde{r}+F_{\ell+3}$)
    \[
    \begin{array}{ccccccccccc}
    (\widetilde{x}) &   & \cdots    & x_{\ell+6}    & 0 & 0 & 1 & 0 & (x+\widetilde{r})_{\ell+1}    & \cdots    & (x+\widetilde{r})_2       \\
                    & + & \cdots    & r_{\ell+6}    & 0 & 0 & 0 & 0 &   0                           & \cdots    & 0 
    \end{array}
    \]
    Now, using Corollary~\ref{blocage recul}, we obtain \eqref{ccl2 pour bloc de taille 1}.
\end{proof}
The statement of Corollary~\ref{stopping condition bloc 1} means that, given a given block of length $1$ (in the sense that it has one pattern $10$) in $r$, we are able to control the propagation of carries so that the left part of the addition does not change the expansion on the right part and vice versa. We now want to have the same kind of control for larger blocks. We could assume that $x$ has many $0$'s facing the block in $r$ that we want to control. However, this condition would be more and more ``expensive'' (in the sense that the probability for $x$ to satisfy it would decrease to $0$) as the length of the block increases. To avoid this, we are looking for conditions on $x$ that affect only a bounded number of digits, regardless of the length of the block. In other words, we want our conditions to appear in ``most'' of $x\in\XX$. We obtain the following corollary where the length of the block is $m+2$ and where we fixed $8$ digits in $x$.
\begin{corollaire}\label{stopping cdt bloc >1}
Let $x\in\XX$, $r\in\NN$ and $m\ge0$. Suppose that it exists $\ell\ge 2$ such that 
    \begin{itemize}
        \item $x_i=0$ where $i\in\{\ell,\ell+1,\ell+2,\ell+3,\ell+2m+4,\ell+2m+5,\ell+2m+6,\ell+2m+7\}$,
        \item $r_{\ell+1}=r_{\ell+2m+7}=0$ and $r_{\ell+2i+3}=1$ for $i=0,\cdots,m+1$.
    \end{itemize}
    Denote $\widetilde{r}:=\overline{r_{\ell}\cdots r_2}$. Then we have, for all $k\ge\ell+2m+7$
    \begin{align}\label{ccl 1 lemme stopping cdt}
        (x+\widetilde{r}+\sum_{i=0}^{m+1}F_{\ell+2i+3})_k=(x+\widetilde{r})_k=x_k
    \end{align}
    and, for all $k\le \ell+2m+2$
    \begin{align}\label{ccl 2 lemme stopping cdt}
      (x+r)_k=(x+\widetilde{r}+\sum_{i=0}^{m+1}F_{\ell+2i+3})_k.  
    \end{align}
\end{corollaire}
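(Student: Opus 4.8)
The plan is to isolate the block by writing $r=\widetilde{r}+B+R$, where $B:=\sum_{i=0}^{m+1}F_{\ell+2i+3}$ is the block itself and $R$ collects the digits of $r$ at positions $\ge\ell+2m+8$. Note that $r_{\ell+2m+6}=r_{\ell+2m+7}=0$, the former by Zeckendorf from $r_{\ell+2m+5}=1$ and the latter by hypothesis, so $R$ is indeed supported there; likewise $r_{\ell+1}=r_{\ell+2}=0$ separates $\widetilde{r}$ from $B$. Since the three parts lie in disjoint, non-adjacent ranges of positions, this is a genuine Zeckendorf splitting and $x+r=\big((x+\widetilde{r})+B\big)+R$. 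Writing $y:=x+\widetilde{r}$, Lemma~\ref{blocage_avancee} applied with the parameter $\ell$ (using $\widetilde{r}<F_{\ell+1}$ and $x_{\ell+2}=x_{\ell+3}=0$) gives $y_k=x_k$ for every $k\ge\ell+3$; in particular the four top conditions survive, $y_{\ell+2m+4}=\dots=y_{\ell+2m+7}=0$.

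The heart of the argument is to understand $y+B$ near the top of the block, and I would do this by a magnitude estimate rather than by tracking the (two-directional) carries term by term. Because $y$ vanishes at positions $\ell+2m+4,\dots,\ell+2m+7$, I split $y=H+N$ with $H$ supported on positions $\ge\ell+2m+8$ and $N:=\overline{y_{\ell+2m+3}\cdots y_2}<F_{\ell+2m+4}$. A telescoping sum gives $B=\sum_{i=0}^{m+1}F_{\ell+2i+3}=F_{\ell+2m+6}-F_{\ell+2}<F_{\ell+2m+6}$, hence $N+B<F_{\ell+2m+4}+F_{\ell+2m+6}<F_{\ell+2m+7}$, so $H$ and $N+B$ remain in non-adjacent ranges and $(y+B)_k=y_k$ for $k\ge\ell+2m+7$; combined with the previous paragraph this is exactly \eqref{ccl 1 lemme stopping cdt}. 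For the digits below, I use a dichotomy on the size of $N+B$: if $N+B<F_{\ell+2m+6}$ then its top digit is $\le\ell+2m+5$, so $(y+B)_{\ell+2m+6}=(y+B)_{\ell+2m+7}=0$; if instead $N+B\ge F_{\ell+2m+6}$, then the greedy expansion begins with $F_{\ell+2m+6}$ and the remainder $N+B-F_{\ell+2m+6}<N<F_{\ell+2m+4}$ occupies only positions $\le\ell+2m+3$, so $(y+B)_{\ell+2m+4}=(y+B)_{\ell+2m+5}=0$. In either case $y+B=x+\widetilde{r}+B$ carries two consecutive zeros at some position $(a,a+1)$ with $a\in\{\ell+2m+4,\ell+2m+6\}$.

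It then remains to add back $R$. Since $R$ is supported on positions $\ge\ell+2m+8$ and $x+\widetilde{r}+B$ has the two consecutive zeros just produced at $(a,a+1)$ with $a+2\le\ell+2m+8$, Corollary~\ref{blocage recul} applied at the parameter $a$ shows that adjoining $R$ does not alter any digit of index $\le a-2$, and $a-2\ge\ell+2m+2$. As $x+r=(x+\widetilde{r}+B)+R$, this is precisely \eqref{ccl 2 lemme stopping cdt}.

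The step I expect to be delicate is the ``two consecutive zeros'' claim, since a naive term-by-term addition of $B$ triggers carries running in both directions through the arbitrary middle digits of $x$, and one might fear ending with the pattern $\overline{0101}$ at positions $\ell+2m+7,\dots,\ell+2m+4$, which would contain no adjacent pair of zeros. The purpose of the magnitude argument above is exactly to bypass this danger: the bounds $N<F_{\ell+2m+4}$ and $B<F_{\ell+2m+6}$ force one of the two favourable configurations no matter how the carries cascade through the middle, which is why fixing only the eight digits of $x$ in the hypothesis is enough regardless of the block length $m+2$.
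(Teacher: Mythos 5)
Your proof is correct, and it keeps the paper's global skeleton — split $r=\widetilde r+B+R$ with $B=\sum_{i=0}^{m+1}F_{\ell+2i+3}$, use Lemma~\ref{blocage_avancee} to see that adding $\widetilde r$ leaves all digits of index $\ge \ell+3$ untouched, and finish by applying Corollary~\ref{blocage recul} once a pattern $00$ has been secured somewhere in the window $\ell+2m+4,\dots,\ell+2m+7$ — but you handle the middle step by a genuinely different device. Where the paper invokes Lemma~\ref{blocage avancee + motifs} (applied at the shifted index $\ell+2m+3$) to conclude that $x+\widetilde r+B$ carries exactly $0100$ or $0010$ at the top of the block, you re-derive a weaker but sufficient statement from scratch: the telescoped value $B=F_{\ell+2m+6}-F_{\ell+2}$ together with $N<F_{\ell+2m+4}$ gives $N+B<F_{\ell+2m+7}$, and the dichotomy on $N+B\gtrless F_{\ell+2m+6}$ yields two consecutive zeros either at positions $(\ell+2m+6,\ell+2m+7)$ or, via the neat cancellation $N+B-F_{\ell+2m+6}=N-F_{\ell+2}<F_{\ell+2m+4}$, at positions $(\ell+2m+4,\ell+2m+5)$. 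This is the same style of magnitude comparison that the paper uses \emph{inside} the proof of Lemma~\ref{blocage avancee + motifs} (there: $\overline{x_\ell\cdots x_2}+r\ge F_{\ell+2}$, and $<F_{\ell+1}$ after subtracting $F_{\ell+3}$), so in effect you have inlined and re-proved that lemma in the special case needed; the paper's route buys a reusable statement with the exact stopping patterns (C1)/(C2), while yours buys self-containedness and makes explicit that only the existence of a $00$ in the window matters, not the precise pattern — which is indeed all that Corollary~\ref{blocage recul} requires, since in both of your cases $a\le\ell+2m+6$ and $a-2\ge\ell+2m+2$. One point you pass over silently: when $y=x+\widetilde r$ has infinitely many nonzero digits, $H$ is not an integer, and the identity $y+B=H+(N+B)$ with concatenated expansions (legitimate because the supports are separated by the forced zero at position $\ell+2m+7$) should formally be justified by truncating $y$, invoking uniqueness of the Zeckendorf expansion at each finite level, and using the continuity of $T^B$; this is routine and at the same level of rigor as the paper's own digit manipulations, so it is not a gap.
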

\begin{proof}
For simplicity, we write $B$ as the block $\sum_{i=0}^{m+1}F_{\ell+2i+3}$. We decompose the addition $x+r$ in several steps. First we add $\widetilde{r}$. With the hypothesis on $x$, we actually consider the following addition
\[
\begin{array}{cccccccccccccccccc}
     (x)                &   & \cdots    & x_{\ell+8+2m} & 0 & 0 & 0 & 0 & x_{\ell+2m+3} & \cdots    & x_{\ell+4}    & 0 & 0 & 0 & 0         & x_{\ell-1}    & \cdots    & x_2  \\
     (\widetilde{r})    & + &           &               &   &   &   &   &               &           &               &   &   &   & r_{\ell}  & r_{\ell-1}    & \cdots    & r_2
\end{array}
\]
Thanks to Lemma~\ref{blocage_avancee}, we know that this addition can only modify digits of $x$ of indices $\le \ell+1$. We continue with the addition of $B$. For simplicity, we write $\widetilde{x}=x+\widetilde{r}$
\[
\begin{array}{cccccccccccccccccc}
    (\widetilde{x}) &   & \cdots    & x_{\ell+8+2m} & 0 & 0 & 0 & 0 & x_{\ell+2m+3} & \cdots    & x_{\ell+4}    & 0 & 0 & \widetilde{x}_{\ell+1}  & \widetilde{x}_{\ell}    & \widetilde{x}_{\ell-1}  & \cdots    & \widetilde{x}_2 \\
    (B)             & + &           &               &   &   & 1 & 0 & 1             & \cdots    & 0             & 1 & 0 & 0                         & 0                         & 0                         & \cdots    & 0
\end{array}
\]
Now, Lemma~\ref{blocage avancee + motifs} ensures that the expansion of $\widetilde{x}$ is not modified for digits of indices $\ge \ell+2m+7$. We thus prove \eqref{ccl 1 lemme stopping cdt}. But more precisely, Lemma~\ref{blocage avancee + motifs} concludes that the expansion of $\widetilde{x}+B$ is either
\[\overline{\cdots x_{\ell+8+2m}0100(\widetilde{x}+B)_{\ell+2m+3}\cdots(\widetilde{x}+B)_2 }\]
or
\[\overline{\cdots x_{\ell+8+2m}0010(\widetilde{x}+B)_{\ell+2m+3}\cdots(\widetilde{x}+B)_2 }.\]
In both case, a pattern $00$ appears between the indices $\ell+2m+4$ and $\ell+2m+7$. Thus, we can apply Corollary~\ref{blocage recul}: the final step of the addition, which consists into adding what remains in $r$, will not modify the digits of indices $\le \ell+2m+2$. We obtain the conclusion \eqref{ccl 2 lemme stopping cdt}.
\end{proof}

 

\section{Unique Ergodicity of the Odometer}\label{UEO}

    \subsection{Rokhlin towers and the ergodic measure}\label{RTEM}

In this Subsection, we focus on the action of $T$ on cylinders. For each $k\ge 1$, we consider the partition of $\XX$ into $F_{k+2}$ cylinders corresponding to all possible blocks formed by the rightmost $k$ digits of a Z-adic integers (we call them ``\emph{cylinders of order $k$}'').
For example, at order $1$, we partition $\XX$ into $C_0$ and $C_1$. At order $2$, we get $\XX=C_{00}\sqcup C_{01}\sqcup C_{10}$. At order $3$
\[\XX=C_{000}\sqcup C_{001} \sqcup C_{010} \sqcup C_{100} \sqcup C_{101}.\]
In general, we order lexicographically the cylinders of order $k$:
\begin{itemize}
    \item first, those whose name has a $0$ at the leftmost position (there are $F_{k+1}$ of them),
    \item then those whose name has a $1$ at the leftmost position (there are $F_{k}$ of them).
\end{itemize}
We observe that each cylinders of order $k$ with a $0$ at the leftmost position, except the last one, is mapped by $T$ onto the next one, giving rise to a Rokhlin tower of height $F_{k+1}$ : we call it the \emph{large} tower of order $k$. Similarly, each cylinders or order $k$ with a $1$ at the leftmost position, except the last one, is mapped by $T$ onto the next one, giving rise to another Rokhlin tower of height $F_k$ : we call it the \emph{small} tower of order $k$. Thus, for each $k\ge 1$, we get a partition of $\XX$ into two Rokhlin towers whose levels are all cylinders of order $k$. For instance, for $k=4$, we get 

\begin{figure}[H]
    \centering
    \includegraphics[scale=0.35]{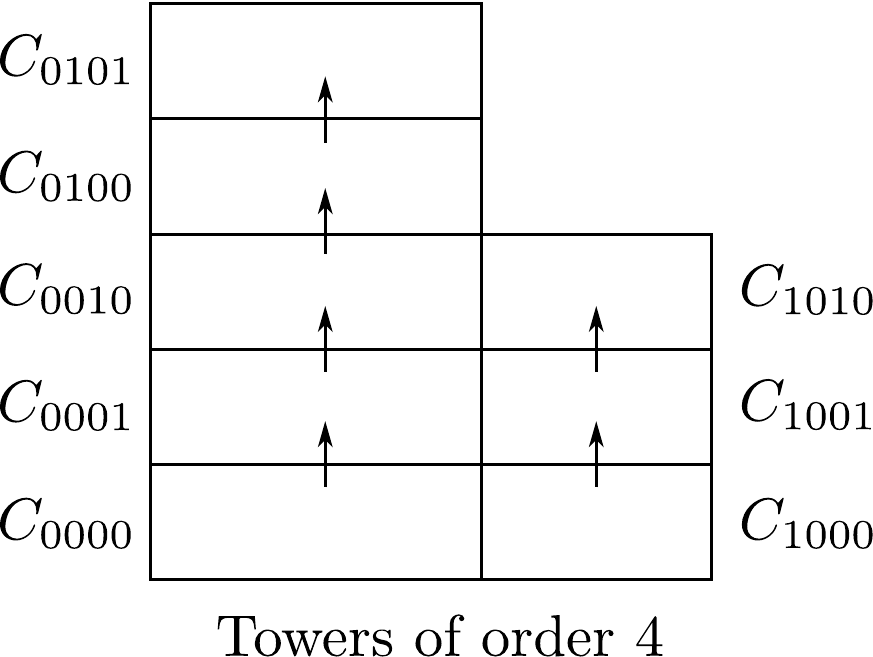}
    \caption{Rokhlin towers of order $4$ (the action of $T$ is representing by the arrows).}
    \label{RT4_Zeck}
\end{figure}
It remains to describe the transition from the Rokhlin towers of order $k$ to those of order $k+1$. First, note that all levels of the small tower of order $k$ are also levels of the large tower of order $k+1$, since a cylinder of order $k$ with a $1$ at the leftmost position coincides with the cylinders of order $k+1$ with an additional $0$ concatenated at the left of its name (e.g. $C_{101000}=C_{0101000}$).

Each level of the large tower of order $k$ is partitioned into two cylinders of order $k+1$ : one obtained by concatenating a $0$ at the left of its name and the other obtained by concatenating a $1$. Thus, the large Rokhlin tower of order $k$ is cut into two subtowers : 
\begin{itemize}
    \item the first one is the bottom part of the large Rokhlin tower of order $k+1$ (the top part being nothing but the small Rokhlin towers of order $k$);
    \item the second one is the small Rokhlin tower of order $k+1$.
\end{itemize}
This transition will be refered as the \emph{Cut and Stack process}.
\begin{figure}[H]
    \centering
    \includegraphics[scale=0.26]{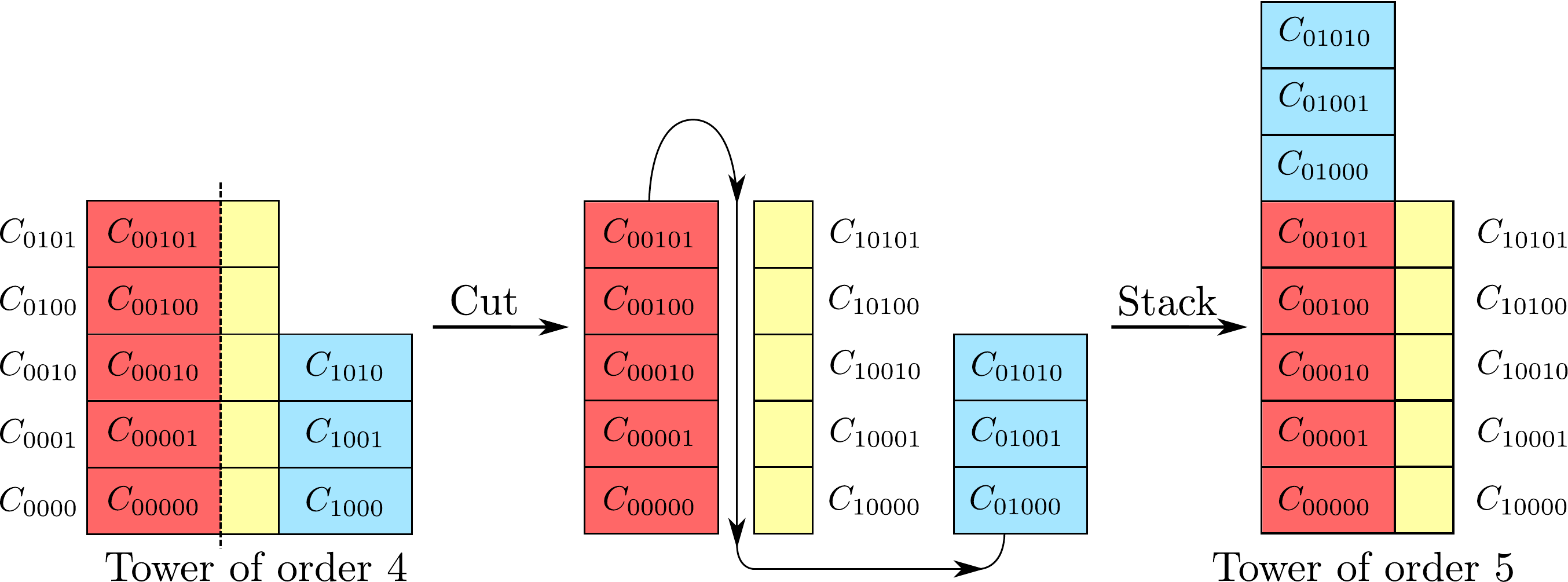}
    \caption{Visual description of the Cut and Stack process (for $k=4$).}
    \label{C_S_Zeck}
\end{figure}

The proceding analysis of the action of $T$ on cylinders, yielding to the construction of the Rokhlin towers, enables us to describe the unique $T$-invariant probability measure.
\begin{prop}\label{prop caract mesure zeck}
$(\XX,T)$ is uniquely ergodic and the unique $T$-invariant measure $\PP$ satisfies $\forall \ell\ge 2$ and $(r_{\ell},\cdots,r_2)\in\XX_f$ 
\begin{align}\label{caract mesure zeck}
    \PP(C_{r_{\ell}\cdots r_2})= &\left\{\begin{array}{cc}
         \dfrac{1}{\varphi^{\ell-1}}& \textrm{ if } r_{\ell}=0 \\
         \dfrac{1}{\varphi^{\ell}}& \textrm{ if } r_{\ell}=1  
    \end{array}\right.
\end{align}
\end{prop}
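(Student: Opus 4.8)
The plan is to prove both assertions at once. I will invoke the Krylov--Bogolyubov theorem to guarantee that the continuous map $T$ on the compact space $\XX$ admits at least one invariant Borel probability measure, and then show that \emph{any} $T$-invariant measure $\mu$ is completely determined on cylinders by the announced values. Since the cylinders of all orders generate the Borel $\sigma$-algebra of $\XX$, this forces uniqueness of the invariant measure, which is precisely unique ergodicity, and simultaneously yields \eqref{caract mesure zeck}. So fix an arbitrary $T$-invariant $\mu$. The starting observation is that, within each of the two Rokhlin towers of order $k$, the map $T$ sends every non-top level homeomorphically onto the level above it; by $T$-invariance of $\mu$, consecutive levels therefore carry the same mass. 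Hence there exist numbers $a_k$ and $b_k$ such that every level of the large tower of order $k$ (name starting with $0$) has measure $a_k$, and every level of the small tower of order $k$ (name starting with $1$) has measure $b_k$.

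The next step is to turn the Cut and Stack description into relations between the $a_k,b_k$. On one hand, each level of the small tower of order $k$ is literally a level of the large tower of order $k+1$ (prepending a $0$ to a name starting with $1$ yields a name starting with $0$), which gives $b_k=a_{k+1}$. On the other hand, each level $C_w$ of the large tower of order $k$ splits into the two order-$(k+1)$ cylinders $C_{0w}$ and $C_{1w}$, the first being a level of the large tower and the second a level of the small tower of order $k+1$; additivity of $\mu$ gives $a_k=a_{k+1}+b_{k+1}$. Eliminating $b$ through $b_{k+1}=a_{k+2}$ yields the linear recursion $a_{k+2}=a_k-a_{k+1}$. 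Counting the $F_{k+1}$ levels of the large tower and the $F_k$ levels of the small tower, the normalization $\mu(\XX)=1$ reads $F_{k+1}a_k+F_k b_k=1$, that is $F_{k+1}a_k+F_k a_{k+1}=1$.

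It remains to solve this system. The characteristic equation of $a_{k+2}+a_{k+1}-a_k=0$ is $x^2+x-1=0$, with roots $1/\varphi$ and $-\varphi$, so $a_k=A\varphi^{-k}+B(-\varphi)^k$ for some constants $A,B$. Nonnegativity of $b_k$ together with the normalization forces $F_{k+1}a_k\le 1$, hence $a_k\le 1/F_{k+1}$ and $a_k\to 0$; since $|(-\varphi)^k|\to\infty$ this is possible only if $B=0$. Plugging $a_k=A\varphi^{-k}$ into the normalization for $k=1$ gives $A(\varphi^{-1}+\varphi^{-2})=1$, and since $\varphi^{-1}+\varphi^{-2}=\varphi^{-2}(\varphi+1)=\varphi^{-2}\varphi^2=1$ we get $A=1$. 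Therefore $a_k=\varphi^{-k}$ and $b_k=a_{k+1}=\varphi^{-(k+1)}$. Finally, $C_{r_\ell\cdots r_2}$ is a cylinder of order $\ell-1$ whose leftmost digit is $r_\ell$, so its measure is $a_{\ell-1}=\varphi^{-(\ell-1)}$ when $r_\ell=0$ and $b_{\ell-1}=\varphi^{-\ell}$ when $r_\ell=1$, which is exactly \eqref{caract mesure zeck}.

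The main obstacle I anticipate is the bookkeeping around the single point $\overline{0^\infty}$, where $T$ fails to be injective: one must check that this exceptional point does not disturb the equality of masses of consecutive tower levels, and that the identities $b_k=a_{k+1}$ and $a_k=a_{k+1}+b_{k+1}$ only compare levels through the genuinely bijective part of $T$ (the point $\overline{0^\infty}$ sitting in a bottom level). This is what makes the recursion valid for \emph{every} invariant $\mu$ rather than just for the candidate measure. The other point requiring care is the elimination of the parasitic component $(-\varphi)^k$ of the recursion, which is supplied cleanly by the a priori bound $a_k\le 1/F_{k+1}$.
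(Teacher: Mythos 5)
Your proof is correct, and it shares the paper's structural foundation --- Rokhlin towers, $T$-invariance forcing all levels of each tower to carry equal mass (with the same careful remark about $\overline{0^\infty}$ sitting in a bottom level, so that $T^{-1}$ of any non-bottom level is exactly the level below), the Cut and Stack identities, and normalization --- but it diverges from the paper in how the resulting system is solved, and this is a genuine difference. The paper first pins down $\PP(C_1)=1/\varphi^2$ by a limiting argument: it counts the levels of the order-$k$ towers contained in $C_1$ (obtaining $u_k=F_{k-1}$, $v_k=F_{k-2}$ from a two-term recursion), writes $\PP(C_1)$ as a convex combination with coefficients $u_k/F_{k+1}$ and $v_k/F_k$ both tending to $1/\varphi^2$, and lets $k\to\infty$; it then runs an induction on the order $k$, using the identity $\varphi F_{k+2}+F_{k+1}=\varphi^{k+2}$ at each step to determine the small-tower level measure. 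You instead encode everything in the single linear recurrence $a_{k+2}=a_k-a_{k+1}$ with normalization $F_{k+1}a_k+F_k a_{k+1}=1$, solve it by characteristic roots $1/\varphi$ and $-\varphi$, and eliminate the parasitic mode via the a priori bound $a_k\le 1/F_{k+1}$ --- which plays exactly the role of the paper's limit computation for $\PP(C_1)$, but in one clean step with no induction. Your route is arguably more economical and makes the uniqueness mechanism transparent (the invariance relations admit exactly one solution that stays bounded and normalized); the paper's route keeps the tower combinatorics more visible and computes $\PP(C_1)$ as a by-product. One presentational point: the paper leaves existence implicit, whereas you correctly make it explicit via Krylov--Bogolyubov, which is needed to conclude unique ergodicity rather than mere uniqueness of invariant measures; and your final identification of $C_{r_\ell\cdots r_2}$ as a cylinder of order $\ell-1$ matches the paper's indexing, so the constants in \eqref{caract mesure zeck} come out right.
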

\begin{proof}
Let $\PP$ be a $T$-invariant measure. Due to $T$-invariance, we observe that, for each order $k\ge1$, the measure of each level of the large tower of order $k$ is the same so as the measure of each level of the small tower of order $k$. We claim that 
\begin{align}\label{mesure C_1}
    \PP(C_1)=\frac{1}{\varphi^2}.
\end{align} 
Indeed, if we denote by $u_k$ (resp. $v_k$) the number of levels included in $C_1$ in the large (resp. small) tower of order $k$, then, due to the Cut and Stack process, we have the relations for $k\ge 2$
\[\left\{\begin{array}{ll}
     u_{k+1}&=u_k+v_k   \\
     v_{k+1}&=u_k 
\end{array}\right.\]
with initial conditions $u_1=0$ and $v_0=1$. We deduce that, for $k\ge 1$, $u_k=F_{k-1}$ and $v_k=F_{k-2}$. Also, for all $k\ge 1$, we have the identity
\begin{align*}
    \PP(C_1)&= u_k~\PP~(x\in \textrm{one level of the large tower of order k})\\
        & \hspace{5mm} + v_k~\PP~(x\in\textrm{one level of the small tower of order k}) \\
        &= \frac{u_k}{F_{k+1}}~\PP~(x\in \textrm{the large tower of order k}) \\
        & \hspace{5mm} + \frac{v_k}{F_k}~\PP~(x\in\textrm{the small tower of order k}) \\
        &=\frac{v_k}{F_k} + (\frac{u_k}{F_{k+1}}-\frac{v_k}{F_k})~\PP~(x\in \textrm{the large tower of order k}) \\
\end{align*}
Taking the limit on $k$ gives \eqref{mesure C_1}. From the towers and the $T$-invariance, we successively deduce that $\PP(C_0)=\frac{1}{\varphi}$, $\PP(C_{00})=\frac{1}{\varphi^2}$, $\PP(C_{10})=\frac{1}{\varphi^3}$. Then, suppose that, at order $k\ge 2$, each level of the large (resp. small) tower measures $\frac{1}{\varphi^k}$ (resp. $\frac{1}{\varphi^{k+1}}$). Observe that a level of the small tower of order $k$ is, due to the Cut and Stack process, exactly a level of the large tower of order $k+1$. Thus, by $T$-invariance, we deduce that each level of the large tower of order $k+1$ measures $\frac{1}{\varphi^{k+1}}$. Also by $T$-invariance, each level of the small tower of order $k+1$ has the same measure. We denote it $p\in [0,1]$. Since there are $F_{k+2}$ (resp. $F_{k+1}$) levels in the large (resp.small) tower of order $k+1$, we have the relation 
\begin{align*}
    \frac{F_{k+2}}{\varphi^{k+1}} + p F_{k+1} &=1
\end{align*}
which is equivalent to the relation
\begin{align*}
    \varphi F_{k+2} + p\varphi^{k+2} F_{k+1} &= \varphi^{k+2}
\end{align*}
Combining with the classical identity $\varphi F_{k+2}+F_{k+1}=\varphi^{k+2}$, we deduce that $p=\frac{1}{\varphi^{k+2}}$. By induction, we prove \eqref{caract mesure zeck}.
\end{proof}

    \subsection{Probabilistic interpretation of the measure}\label{PIM}
    
In the case of the $b$-adic odometer ($b\ge 2)$, the $T$-invariant measure can be interpreted as an independent choice of each digit according to the uniform law on the set of possible digits (see \cite{TREJYH}, page 7). For the Z-adic odometer, it is not as easy to describe. First, the digits do not follow the same law. Indeed, for all $k\ge 2$, $\PP(x_k=1)=\frac{F_{k-1}}{\varphi^{k}}$ and depends on $k$. That gives another proof about the frequency of $1$'s in the Zeckendorf expansion : $\PP(x_k=1)\tend{k}{+\infty}{}{\frac{1}{\varphi^2+1}}$ (see \cite{MG}, \cite{CGL} for other proofs). Therefore, if $\sigma$ is the shift on $\XX$, the law of $x$ is not the same as the law of $\sigma^k(x)$, for $k\ge 1$ which means $\PP$ is not stationary. Furthermore, the choice of a digit is not independent of the other digits because a $1$ must be followed by a $0$. However, the lack of stationarity and independency of $\PP$ is compensed by the following renewal property.
\begin{prop}\label{renouvellement}
   Let $C$ be a cylinder, $k\ge 2$, and $(r_k,\cdots,r_2)\in\XX_f$. Then
\begin{enumerate}
    \item $\PP\left(\sigma^{k}x\in C~\lvert~x\in C_{0r_{k}\cdots r_2}\right)=\PP(C)$ 
\end{enumerate}
and
\begin{enumerate}[resume]
    \item $\PP\left(\sigma^{k+1}x\in C~\lvert~x\in C_{1r_{k}\cdots r_2}\textbf{}\right)=\PP(C)$ (if $r_{k}=0$).
\end{enumerate}
\end{prop}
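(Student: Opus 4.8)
The plan is to compute both conditional probabilities directly from the explicit formula \eqref{caract mesure zeck}, reading each one as a ratio $\PP(A\cap B)/\PP(B)$, where $B$ is the conditioning cylinder and $A$ is the event bearing on the shifted point. The computation becomes transparent once \eqref{caract mesure zeck} is rewritten in the uniform shape
\[
\PP(C_{a_{\ell}\cdots a_2})=\varphi^{-(\ell-1+a_{\ell})},
\]
valid for every $(a_{\ell},\dots,a_2)\in\XX_f$, since $a_{\ell}=0$ recovers $\varphi^{-(\ell-1)}$ and $a_{\ell}=1$ recovers $\varphi^{-\ell}$. With the convention $(\sigma^{j}x)_n=x_{n+j}$, the event $\{\sigma^{j}x\in C\}$ for $C=C_{s_m\cdots s_2}$ reads $x_{j+2}=s_2,\dots,x_{j+m}=s_m$, so intersecting it with a conditioning cylinder always produces a single cylinder whose measure I can read off from the displayed formula.

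For part (1) I would take $B=C_{0r_k\cdots r_2}$, which fixes the digits at positions $2,\dots,k+1$ with top digit $0$, so $\PP(B)=\varphi^{-k}$. The event $\{\sigma^{k}x\in C\}$ fixes $x_{k+2}=s_2,\dots,x_{k+m}=s_m$, and since these positions all lie strictly above $k+1$, the intersection is exactly the cylinder $C_{s_m\cdots s_2\,0\,r_k\cdots r_2}$; this name is admissible because the $0$ at position $k+1$ separates the two blocks, so no two consecutive $1$'s can arise at the junction. Its top index is $k+m$ with top digit $s_m$, hence its measure is $\varphi^{-(k+m-1+s_m)}$. Dividing by $\PP(B)=\varphi^{-k}$ yields $\varphi^{-(m-1+s_m)}=\PP(C)$, which is the claim, with no case split required.

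For part (2) the decisive point is that conditioning on $B=C_{1r_k\cdots r_2}$ forces an extra $0$: since $x_{k+1}=1$ and $x\in\XX$ admits no two consecutive $1$'s, every point of $B$ automatically satisfies $x_{k+2}=0$. This is precisely why the shift is by $k+1$ and not by $k$, for the first unconstrained digit beyond the block boundary sits at position $k+3$, and $(\sigma^{k+1}x)_2=x_{k+3}$. Here $\PP(B)=\varphi^{-(k+1)}$, and the intersection with $\{\sigma^{k+1}x\in C\}$ is the cylinder $C_{s_m\cdots s_2\,0\,1\,0\,r_{k-1}\cdots r_2}$, where I have written out the forced $0$ at position $k+2$ and used the hypothesis $r_k=0$ to guarantee admissibility around the $1$ at position $k+1$. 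Its top index is $k+1+m$ with top digit $s_m$, so its measure is $\varphi^{-(k+m+s_m)}$; dividing by $\varphi^{-(k+1)}$ again gives $\varphi^{-(m-1+s_m)}=\PP(C)$.

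The only genuine subtlety, and the step I would be most careful about, is the bookkeeping of indices together with the forced $0$ in part (2): one must recognise that the constraint $x_{k+1}=1$ already determines $x_{k+2}$, so that no mass is spent at that coordinate and the shift by $k+1$ lands exactly on the first free digit. Everything else is a direct substitution into the uniform measure formula, and the cancellation of the factor $\varphi^{-k}$ (respectively $\varphi^{-(k+1)}$) is what encodes the renewal property.
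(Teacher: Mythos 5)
Your proof is correct and follows essentially the same route as the paper: both compute the conditional probability as a ratio of cylinder measures via Proposition~\ref{prop caract mesure zeck}, observing that the intersection event is a single cylinder whose measure can be read off directly. The only cosmetic differences are that you treat both top-digit cases of $C$ uniformly with the formula $\varphi^{-(\ell-1+a_{\ell})}$ and prove part (2) directly via the forced $0$ at position $k+2$, whereas the paper reduces to a cylinder with leftmost digit $0$ and deduces part (2) from part (1) through the identity $C_{1r_k\cdots r_2}=C_{01r_k\cdots r_2}$.
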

\begin{proof}
Without a loss of generality, we can suppose $C$ is a cylinder of order $k_0$ with a $0$ at the left side of its name for some $k_0\ge 1$ so $\PP(C)=\frac{1}{\varphi^{k_0}}$. Then
\begin{align*}
    \PP\left(\sigma^{k+2}x\in C~\lvert~x\in C_{0r_{k}\cdots r_2}\right) &=\frac{\PP\left(x\in C_{0r_{k}\cdots r_2}\cap\sigma^{k+2}x\in C\right)}{\PP(x\in C_{0r_{k}\cdots r_2})}.
\end{align*}
We observe that the set $\{x\in C_{0r_{k}\cdots r_2}\cap\sigma^{k+2}x\in C\}$ is actually a cylinder of order $k+k_0$ with a leftmost $0$ in its name. Its measure is therefore $\frac{1}{\varphi^{k+k_0}}$.
\begin{align*}
    \PP\left(\sigma^{k+2}x\in C~\lvert~x\in C_{0r_{k}\cdots r_2}\right) &=\frac{\varphi^{k}}{\varphi^{k+k_0}}=\frac{1}{\varphi^{k_0}}=\PP(C).
\end{align*}
We get the first point of the proposition. Then, we observe that, since $C_{1r_k\cdots r_2}=C_{01r_k\cdots r_2}$, the second is a particular case of the first one (with $k+1$ instead of $k$).
\end{proof}
Once we know the value of $x_k$, the conditionnal law of $x_{k+1}$ depends neither on $k$ or $x_{k-1},\cdots,x_2$. In particular, we get that
\begin{align*}
    \PP(x_{k+2}=1~\lvert~x_{k+1},\cdots,x_2)= &\left\{\begin{array}{cc}
         \dfrac{1}{\varphi^{2}}& \textrm{ if } x_{k+1}=0 \\
         0& \textrm{ otherwise }   
    \end{array}\right.
\end{align*}
and
\begin{align*}
    \PP(x_{k+2}=0~\lvert~x_{k+1},\cdots,x_2)= &\left\{\begin{array}{cc}
         \dfrac{1}{\varphi}& \textrm{ if } x_{k+1}=0 \\
         1& \textrm{ otherwise. }   
    \end{array}\right.
\end{align*}
Therefore, under $\PP$, the digits $x_2$, $x_3$, $\cdots$ form a Markov Chain with transition probabilities given on the figure, starting with the initial law 
\[\PP(x_2=1)=\frac{1}{\varphi^2} \textrm{ and } \PP(x_2=0)=\frac{1}{\varphi}.\]

\begin{figure}[H]
    \centering
    \begin{tikzpicture}[shorten >=1pt,node distance=2cm,on grid,auto] 
        \node[state] (q_0)   {$0$}; 
        \node[state](q_3) [right=of q_0] {$1$};
        \path[->] 
        (q_0) edge [bend left] node [] {$\frac{1}{\varphi^2}$} (q_3)
        (q_3) edge [bend left] node [] {1} (q_0)
        (q_0) edge [loop above] node [] {$\frac{1}{\varphi}$} (q_0);
    \end{tikzpicture}
    \caption{Transition probabilities of the Markov Chain.}
    \label{Markov chain}
\end{figure}
    
    \subsection{Reminders on some notions of mixing coefficients}\label{RNMC}
    
In the Introduction (more precisely at Definition~\ref{coeff alpha melange Zeck}), we introduced the notion of $\alpha$-mixing coefficients. It happens that the distribution of the digits in a $Z$-adic integer satisfies a good inequality for another (better) notion of mixing coefficients : the $\phi$-mixing coefficients. They are defined as follows.
\begin{definition}\label{coeff phi melange Zeck}
   Let $(X_j)_{j\ge 1}$ be a (finite or infinite) sequence of random variables. The associated \emph{$\phi$-mixing coefficients} $\phi(k)$, $k\ge 1$, are defined by
   \[\phi(k):=\sup_{p\ge 1} \hspace{1mm} \sup_{A,B} \hspace{2mm} \lvert \PP_A(B)-\PP(B) \rvert\]
   where the second supremum is taken over all events $A$ and $B$ such that 
\begin{itemize}
    \item $A\in\sigma(X_j:1\le  j\le p)$,
    \item $\PP(A)>0$ and
    \item $B\in\sigma(X_j:  j\ge k+p)$.
\end{itemize}
By convention, if $X_j$ is not defined when $j\ge k+p$ then the $\sigma$-algebra is trivial.
\end{definition}
In the case of a finite sequence $(X_1,\cdots,X_n)$, the convention implies that $\phi(k)=0$ for $k\ge n$.

Both $\alpha$ and $\phi$-mixing coefficients are linked together and, as written above, $\phi$-mixing coefficients are ``better'' than $\alpha$-mixing coefficients. Indeed, we have the following property from the survey \cite{RCB1} by Bradley.
\begin{prop}[Bradley]\label{Lien coeff phi et alpha Zeck}
For every $k\ge 1$
\[\alpha(k)\le \frac{1}{2}\phi(k).\]
\end{prop}

    \subsection{\texorpdfstring{$\phi$}{0}-mixing property for Z-adic digits}\label{MPZD}

As mentionned in the previous Subsection, there exists a good upper bound on the $\phi$-mixing coefficients for the coordinates of $x\in\XX$.
\begin{prop}\label{Melange_coord}
For $x=(x_j)_{j\ge 2}\in\XX$ randomly chosen with law $\PP$, we have for $k\ge 1$
\[\phi(k)\le \frac{2}{\varphi^{2k}}.\]
\end{prop}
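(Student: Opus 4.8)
The plan is to exploit the renewal property of Proposition~\ref{renouvellement}, which tells us that once the value $x_{k+1}$ is known, the conditional law of the digits $x_{k+2}, x_{k+3}, \dots$ is \emph{independent} of $x_2, \dots, x_k$. More precisely, I would fix $p \ge 1$ and sets $A \in \sigma(x_2, \dots, x_{p+1})$ and $B \in \sigma(x_j : j \ge k+p+1)$, and aim to bound $\lvert \PP_A(B) - \PP(B)\rvert$ uniformly. The key observation is that $B$ depends only on digits of index $\ge k+p+1$, and the renewal property says that conditioning on the pair $(A, \{x_{k+p}=\varepsilon\})$ collapses — for $\varepsilon=0$ via point~1 and, after noting $C_{1\cdots}=C_{01\cdots}$, for $\varepsilon=1$ via point~2 — to the \emph{unconditioned} law of the tail $\sigma(x_j : j \ge k+p+1)$. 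Thus the only obstruction to perfect independence is that $A$ and the tail $B$ are separated by the ``link'' digit $x_{k+p}$, whose conditional distribution given $A$ may differ from its marginal.

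Concretely, I would condition on $x_{k+p}$ and write
\[
\PP_A(B) = \PP_A(x_{k+p}=0)\,\PP_A(B \mid x_{k+p}=0) + \PP_A(x_{k+p}=1)\,\PP_A(B \mid x_{k+p}=1).
\]
By the renewal property each factor $\PP_A(B \mid x_{k+p}=\varepsilon)$ equals $\PP(B \mid x_{k+p}=\varepsilon)$ (the conditioning on $A$ drops out once $x_{k+p}$ is fixed), so the discrepancy $\PP_A(B)-\PP(B)$ reduces to a weighted difference controlled by $\lvert \PP_A(x_{k+p}=1)-\PP(x_{k+p}=1)\rvert$. This in turn is governed by how much knowledge of $x_2,\dots,x_{p+1}$ perturbs the distribution of $x_{k+p}$, which by the Markov chain structure of Figure~\ref{Markov chain} decays geometrically in the gap $k$. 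I expect the right-hand quantity $\PP(x_{k+p}=1 \mid x_{p+1}=a)$ to converge to the stationary value $\tfrac{1}{\varphi^2+1}$ at rate governed by the subdominant eigenvalue of the transition matrix, and a direct computation of the chain's two-step behaviour should yield the explicit bound $2/\varphi^{2k}$.

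The main obstacle will be making the dependence on the gap $k$ (rather than the absolute index) precise and showing the constant is exactly $2$ with decay rate exactly $\varphi^{-2}$. Because $\PP$ is \emph{not} stationary (the law of $x_j$ genuinely depends on $j$, as emphasised in Subsection~\ref{PIM}), I cannot simply invoke a stationary-chain mixing estimate; instead I would track the conditional law of $x_{k+p}$ given each of the two possible values $x_{p+1}\in\{0,1\}$, and bound the total-variation distance between these two conditional laws. The non-stationarity is handled by the fact that the renewal property holds at \emph{every} index, so the chain's transition mechanism between index $p+1$ and index $k+p$ is homogeneous enough for the eigenvalue estimate to apply; iterating the transition matrix $k-1$ times and reading off the off-diagonal decay should give the factor $\varphi^{-2k}$ after absorbing the edge effects into the constant $2$.
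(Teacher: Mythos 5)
Your approach is correct, but it takes a genuinely different route from the paper. The paper's proof runs through the explicit parametrization of the Markov chain by i.i.d.\ uniform variables $(U_j)_{j\ge 2}$ via the map $\psi$ (Lemma~\ref{Automate=Loi}): it introduces the event $C=\{\exists j\in ]p,p+k] : U_j\ge \varphi^{-2}\}$, of probability $1-\varphi^{-2k}$, on which a digit $0$ — hence a renewal — is forced inside the gap; it then checks that $A$ is independent of $C$ and of $B\cap C$, and concludes with the generic inequality \eqref{ineg generale conditionnement Zeck}, yielding $2\PP(\overline{C})=2\varphi^{-2k}$ with essentially no computation (the same scheme is recycled later in the proof of Theorem~\ref{mixing action bloc zeck}). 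You instead argue directly on the chain: your decomposition over the link digit $x_{k+p}$ is exact, and the collapse $\PP_A(B\mid x_{k+p}=\varepsilon)=\PP(B\mid x_{k+p}=\varepsilon)$ is indeed available from Proposition~\ref{renouvellement}, by writing $A\cap\{x_{k+p}=\varepsilon\}$ as a disjoint union of cylinders of order $k+p-1$ and applying point~1 for $\varepsilon=0$, point~2 for $\varepsilon=1$ (a leftmost $1$ automatically forces the next digit to be $0$, so point~2 applies). Your remark on non-stationarity is also on target: only the initial law depends on the index, while the transition mechanism is homogeneous — precisely what the renewal property guarantees at every position — so the eigenvalue estimate applies to the $k-1$ transitions between $x_{p+1}$ and $x_{k+p}$.

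One quantitative caution, which is the only place your sketch needs tightening. Since the difference of the two rows of $P$ is $\varphi^{-2}(-1,1)$ and $(-1,1)$ is a left eigenvector with eigenvalue $-\varphi^{-2}$, one gets exactly $\lvert P^{m}(0,1)-P^{m}(1,1)\rvert=\varphi^{-2m}$, hence $\lvert\PP_A(x_{k+p}=1)-\PP(x_{k+p}=1)\rvert\le\varphi^{-2(k-1)}=\varphi^{2}\varphi^{-2k}\approx 2.618\,\varphi^{-2k}$: bounding your second factor trivially by $1$ therefore \emph{overshoots} the claimed constant $2$. You must use the second factor. Given $x_{k+p}=1$ the next digit is forced to $0$ and the tail then renews; given $x_{k+p}=0$ the same continuation occurs with probability $\varphi^{-1}$, so the two conditional tail laws differ in total variation by exactly $\varphi^{-2}$, whence $\lvert\PP(B\mid x_{k+p}=1)-\PP(B\mid x_{k+p}=0)\rvert\le\varphi^{-2}$. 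The product identity then gives $\lvert\PP_A(B)-\PP(B)\rvert\le\varphi^{-2(k-1)}\cdot\varphi^{-2}=\varphi^{-2k}\le 2\varphi^{-2k}$ (and this covers $k=1$ as well, the first factor being at most $1$ there). With that step made explicit your proof is complete, and in fact yields a constant strictly better than the paper's.
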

Our way to prove this result needs to introduce a parametrization of the Markov Chain. So, we define the function 
\[\begin{array}{llll}
    \psi:  & \{0,1\}\times [0,1] & \longrightarrow \{0,1\} \\
         &(0,t) &  \longmapsto 1 &\textrm{ if }t<\frac{1}{\varphi^2}  \\
         &(0,t) & \longmapsto 0 &\textrm{ if }t\ge \frac{1}{\varphi^2} \\
         &(1,t) & \longmapsto 0.
\end{array}\]
Now, let $(U_j)_{j\ge 2}$ independent and identically distributed random variables following a uniform law on the real unit interval and we define the sequence $(y_j)_{j\ge 2}$ as 
\begin{align}\label{genere_coord}
    &\left\{ \begin{array}{lc}
    y_j=\psi(y_{j-1},U_j) & \forall j\ge 2 \\
    y_1=0. & 
\end{array}\right.
\end{align}
\begin{lemme}\label{Automate=Loi}
    The law of $(y_j)_{j\ge 2}$ is $\PP$.
\end{lemme}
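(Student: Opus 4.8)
The plan is to show that the process $(y_j)_{j\ge 2}$ defined by the recursion \eqref{genere_coord} is a Markov chain with exactly the same initial law and transition probabilities as the coordinate process $(x_j)_{j\ge 2}$ under $\PP$, and then invoke the fact that these two quantities determine the law of a Markov chain uniquely. Concretely, I would first verify that $(y_j)$ is indeed Markov: since $y_j=\psi(y_{j-1},U_j)$ depends only on $y_{j-1}$ and the fresh independent randomness $U_j$, the conditional law of $y_j$ given the whole past $(y_{j-1},\dots,y_2)$ depends only on $y_{j-1}$. This is the standard ``random map'' representation of a Markov chain and follows from the independence of the $U_j$'s.

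Next I would compute the transition probabilities of $(y_j)$ directly from the definition of $\psi$ and the uniform law of $U_j$. If $y_{j-1}=0$, then $\PP(y_j=1)=\PP(U_j<1/\varphi^2)=1/\varphi^2$ and $\PP(y_j=0)=1-1/\varphi^2=1/\varphi$ (using the golden-ratio identity $1/\varphi^2+1/\varphi=1$); if $y_{j-1}=1$, then $\psi(1,t)=0$ deterministically, so $\PP(y_j=0\mid y_{j-1}=1)=1$. These match exactly the transition probabilities displayed for the $x$-chain in Subsection~\ref{PIM} (Figure~\ref{Markov chain}). I would also check the initial distribution: here there is a small bookkeeping subtlety because the recursion is seeded with $y_1=0$ rather than with an explicit law on $y_2$. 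Starting from $y_1=0$ and applying one step gives $\PP(y_2=1)=1/\varphi^2$ and $\PP(y_2=0)=1/\varphi$, which coincides with the stated initial law $\PP(x_2=1)=1/\varphi^2$, $\PP(x_2=0)=1/\varphi$.

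Having matched both the initial law (on $y_2$, resp. $x_2$) and all one-step transitions, I would conclude that the finite-dimensional distributions of $(y_j)_{j\ge 2}$ and $(x_j)_{j\ge 2}$ agree: for any $\ell\ge 2$ and admissible word $(r_\ell,\dots,r_2)\in\XX_f$, both assign the product of the initial probability and the successive transition probabilities to the cylinder event. A clean way to present this is to verify directly that $\PP(y_\ell=r_\ell,\dots,y_2=r_2)$ equals the value in \eqref{caract mesure zeck}, namely $1/\varphi^{\ell-1}$ if $r_\ell=0$ and $1/\varphi^\ell$ if $r_\ell=1$, by a telescoping product of the transitions (each ``$0\to 0$'' contributes $1/\varphi$, each ``$0\to 1$'' contributes $1/\varphi^2$, each ``$1\to 0$'' contributes $1$, and the admissibility condition forbids ``$1\to 1$''). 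Since these cylinder probabilities characterize $\PP$ by Proposition~\ref{prop caract mesure zeck}, and since they determine a probability measure on $\XX$ via Kolmogorov consistency, the law of $(y_j)_{j\ge 2}$ must be $\PP$.

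The only real obstacle I anticipate is the indexing/initialization convention: one must be careful that the seed $y_1=0$ together with one application of $\psi$ reproduces the correct marginal for $y_2$, and that the admissibility constraint (no two consecutive $1$'s, equivalently that $\psi(1,\cdot)\equiv 0$ forces a $0$ after every $1$) exactly mirrors the structure of $\XX_f$, so that $(y_j)$ a.s.\ takes values in $\XX$. Once this matching is confirmed the rest is a routine induction on word length, so I would keep the computation brief and lean on Proposition~\ref{prop caract mesure zeck} for the characterization of $\PP$.
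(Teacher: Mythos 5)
Your proposal is correct and takes essentially the same approach as the paper: its (one-line) proof likewise observes that the random-map recursion \eqref{genere_coord} defines a Markov chain whose transition matrix matches that of $\PP$ and whose initial law does too, via $\PP(y_2=0)=\PP\left(U_2\ge \frac{1}{\varphi^2}\right)=\frac{1}{\varphi}=\PP(C_0)$. Your extra telescoping verification of the cylinder formula \eqref{caract mesure zeck} just makes explicit what the paper delegates to the computation in Subsection~\ref{PIM}, where the digits under $\PP$ were already shown to form that Markov chain.
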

\begin{proof}[Proof of Lemma~\ref{Automate=Loi}]
The transition matrix is the same as for $\PP$ and so is the initial law : the probability that $y_2$ is $0$ is the same as the event $U_2\ge \frac{1}{\varphi^2}$ that is $\frac{1}{\varphi}=\PP(C_0)$.
\end{proof}

\begin{proof}[Proof of Proposition~\ref{Melange_coord}]
Let $k\ge 1$ and $p\ge 2$. To choose $x$ with law $\PP$ is equivalent to construct a sequence $y=(y_j)_{j\ge 2}$ using the process $\eqref{genere_coord}$ thanks to Lemma~\ref{Automate=Loi}. Thus, we have a sequence of independent and identically distributed random variables $(U_j)_{j\ge 2}$ with a uniform law on the real unit interval. We consider the event 
\[C:=\left\{\exists j\in]p,p+k] : U_j\ge \frac{1}{\varphi^2}\right\}\]
which has a probability 
\[\PP(C)=1-\frac{1}{\varphi^{2k}}>0.\]
If $y\in C$ then it implies that $y_j=0$ because of $\eqref{genere_coord}$. Then, we take $A\in\sigma(x_j:2\le  j\le p)$ and $B\in\sigma(x_j:  j\ge k+p)$. We observe that, due to \eqref{genere_coord}, $A\in\sigma(U_j : 2\le j \le p)$ while $C\in\sigma(U_j: p<j\le k+p)$ thus $A$ and $C$ are independent. Also, due to Proposition~\ref{renouvellement}, we observe $B\cap C\in\sigma(U_j:j>p)$. So $A$ and $B\cap C$ are independent. 
Also, we have
\begin{equation*}
    \begin{split}
        \lvert\PP_A(B)-\PP(B)\rvert & \le\lvert\PP_A(B)-\PP_{A\cap C}(B)\rvert +\lvert\PP_{A\cap C}(B)-\PP_C(B)\rvert\\
        & \hspace{1 cm} + \lvert\PP_C(B)-\PP(B)\vert.        
    \end{split}
\end{equation*}
But the independences between $A$ and $C$ or $A$ and $B\cap C$ give that 
\begin{equation}\label{ineg triang Zeck}
    \begin{split}
        \lvert\PP_A(B)-\PP(B)\rvert & \le\lvert\PP_A(B)-\PP_{A\cap C}(B)\rvert + \lvert\PP_C(B)-\PP(B)\vert.         
    \end{split}
\end{equation}
As shown in \cite{TREJYH} (Lemma 4.3), denoting $\overline{C}$ the complement of $C$, we have the general inequality for any event $D$
\begin{align}\label{ineg generale conditionnement Zeck}
    \lvert\PP_C(D)-\PP(D)\rvert &\le \PP(\overline{C})
\end{align}
which implies in \eqref{ineg triang Zeck} that
\[\lvert\PP_A(B)-\PP(B)\rvert\le 2\PP(\overline{C}).\]
\end{proof}
Another property about the measure is that if indeed $\PP(x_k=1)$ depends on $k$, it is actually bounded between two positive values. We generalise that fact with the next proposition.
\begin{prop}\label{borne proba Zeck}
Let $(k_0,\cdots,k_{\ell})$ be a collection of integers such that 
\[2\le k_0<k_1<\cdots<k_{\ell}\]
and also $A$ be a union of cylinders of order $k_0-1$ such that $x_{k_0}=0$ if $x\in A$ and such that $\PP(A)>0$. Then
\[\frac{1}{\varphi^{\ell}}\le \PP_A~(\forall 1\le i\le \ell~:~x_{k_i}=0)\le \left(\frac{2}{\varphi^2}\right)^{\ell}.\]
\end{prop}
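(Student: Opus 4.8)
The plan is to condition digit by digit and use the renewal property (Proposition~\ref{renouvellement}) to turn the joint conditional probability into a product of single-gap factors, each of which I will sandwich between $1/\varphi$ and $2/\varphi^2$ using the Markov recursion for the coordinates.

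First I would introduce, for $1\le i\le\ell$, the events $A_i:=A\cap\{x_{k_1}=0,\dots,x_{k_i}=0\}$ with $A_0:=A$, and write by the chain rule
\[\PP_A\bigl(\forall\, 1\le i\le\ell:\ x_{k_i}=0\bigr)=\prod_{i=1}^{\ell}\PP_{A_{i-1}}\bigl(x_{k_i}=0\bigr).\]
The key observation is that each $A_{i-1}$ is a union of cylinders of order $k_{i-1}-1$, every one of which carries a $0$ at position $k_{i-1}$: indeed $A$ fixes $x_2,\dots,x_{k_0}$ with $x_{k_0}=0$, and each added constraint $\{x_{k_j}=0\}$ (for $j<i$) fixes one further digit equal to $0$, the last of these being at position $k_{i-1}$. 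Hence Proposition~\ref{renouvellement}(1) applies to each cylinder composing $A_{i-1}$ (its leftmost $0$ sitting at position $k_{i-1}$): the conditional law of $\sigma^{k_{i-1}-1}x$ is $\PP$. Reading the event $\{x_{k_i}=0\}=\{(\sigma^{k_{i-1}-1}x)_{k_i-k_{i-1}+1}=0\}$ through this renewal, and averaging over the finitely many cylinders of $A_{i-1}$, which all give the same value, yields
\[\PP_{A_{i-1}}\bigl(x_{k_i}=0\bigr)=\PP\bigl(x_{\,k_i-k_{i-1}+1}=0\bigr)=:q_{\,k_i-k_{i-1}+1},\]
a marginal of $\PP$ that depends only on the gap $d_i:=k_i-k_{i-1}\ge1$, not on $i$ nor on $A$.

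Next I would bound $q_m:=\PP(x_m=0)$ for $m\ge2$. Decomposing on $x_{m-1}$ and using the transition probabilities of the Markov chain (Figure~\ref{Markov chain}), together with the initial value $q_2=1/\varphi$, gives the recursion $q_{m+1}=1-q_m/\varphi^2$. Since every $q_m$ lies in $[0,1]$, the identity $1-1/\varphi^2=1/\varphi$ gives $q_{m}=1-q_{m-1}/\varphi^2\ge 1/\varphi$ for $m\ge3$, and $q_2=1/\varphi$ as well, so $q_m\ge1/\varphi$ for all $m\ge2$. Feeding this back and using the identity $1-1/\varphi^3=2/\varphi^2$, we get $q_m=1-q_{m-1}/\varphi^2\le 1-1/\varphi^3=2/\varphi^2$ for $m\ge3$, while $q_2=1/\varphi\le2/\varphi^2$ is clear. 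Thus $1/\varphi\le q_m\le2/\varphi^2$ for every $m\ge2$, and since each factor above is $q_{d_i+1}$ with $d_i+1\ge2$, multiplying the $\ell$ factors gives precisely $\varphi^{-\ell}\le\PP_A(\cdots)\le(2/\varphi^2)^{\ell}$.

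The only delicate point is the first step: one must verify that conditioning on the \emph{union} $A_{i-1}$, rather than on a single cylinder, still produces the clean renewal. This is exactly why the hypothesis forces a $0$ at each $k_j$: every cylinder composing $A_{i-1}$ ends with a $0$ at the same position $k_{i-1}$, so Proposition~\ref{renouvellement} assigns each of them the identical conditional value $q_{d_i+1}$, and the average over the union is that same value. Everything else reduces to the two golden-ratio identities $1-1/\varphi^2=1/\varphi$ and $1-1/\varphi^3=2/\varphi^2$.
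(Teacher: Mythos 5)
Your proof is correct and follows essentially the paper's own route: the chain rule together with the renewal property (Proposition~\ref{renouvellement}) reduces the conditional probability to the product $\prod_{i=1}^{\ell}\PP\bigl(x_{k_i-k_{i-1}+1}=0\bigr)$, exactly as in the paper, including the observation that every cylinder composing $A_{i-1}$ carries a $0$ at position $k_{i-1}$ so the union conditions cleanly. The only difference is cosmetic: you bound the marginals $q_m=\PP(x_m=0)$ via the Markov recursion $q_{m+1}=1-q_m/\varphi^2$ with $q_2=1/\varphi$, whereas the paper uses the closed form $q_m=F_m/\varphi^{m-1}$ and the adjacency of the even/odd subsequences; both correctly yield $1/\varphi\le q_m\le 2/\varphi^2$.
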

\begin{proof}
Let $I\subset \NN$. Since there is a finite number of cylinders of order $k_0-1$, we can write $A$ as a disjoint union $\sqcup_{i\in I} C^{(i)}$ where $C^{(i)}$ is a cylinder of order $k_0-1$. We have the identity
\begin{align}\label{etap prop borne proba}
    \PP_A~(\forall 1\le i\le \ell~:~x_{k_i}=0)&=\prod_{i=1}^{\ell}\PP~(x_{k_i}=0~\lvert~A\cap \bigcap_{j=1}^{i-1}(x_{k_j}=0))
\end{align}
with the convention $\cap_{j=1}^0 (x_{k_j}=0)=\XX$.
But we have 
\begin{align*}
    \PP_A(x_{k_1}=0)&=\frac{\PP\left(\left(x_{k_1}=0\right)\cap \left(x\in\sqcup_{i\in I} C^{(i)}\right)\right)}{\PP(A)}\\
        &=\frac{1}{\PP(A)}\sum_{i\in I} \PP\left((x_{k_1}=0)\cap x\in C^{(i)}\right).
\end{align*}
We recall $\PP(A)=\frac{\lvert I\rvert}{\varphi^{k_0-1}}$ and  that the summand is the probability of a disjoint union of $F_{k_1-k_0+1}$ cylinders of order $k_1-1$ such that $x_{k_1}=0$ so its measure is $\frac{F_{k_1-k_0+1}}{\varphi^{k_1-1}}$. Thus, we obtain
\[\PP_A(x_{k_1}=0)=\PP(x_{k_1-k_0+1}=0).\]
We proceed similarly for the other terms in \eqref{etap prop borne proba} and get
\begin{align}
    \PP_A~(\forall 1\le i\le \ell~:~x_{k_i}=0)&=\prod_{i=1}^{\ell}\PP~(x_{k_i-k_{i-1}+1}=0).
\end{align}
The last equality is given using the renewal of $\PP$. Now we claim that, for any $k\ge 2$
\[\frac{1}{\varphi}\le \PP(x_k=0)\le \frac{2}{\varphi^2}.\]
Indeed, we recall $\PP(x_k=0)=\frac{F_k}{\varphi^{k-1}}$ and observe that the subsequences $\left(\frac{F_{2k}}{\varphi^{2k-1}}\right)$ and $\left(\frac{F_{2k+1}}{\varphi^{2k}}\right)$ are adjacent sequences.
\end{proof}

    \subsection{Ergodic convergence}

This subsection is exactly the same as the end of Subsection 2.1 in \cite{TREJYH} in the context of an integer base but we write it again in the Z-adic context where the arguments are identical.

For $x$ in $\XX$, we define the sequence of empirical probability measures along the (beginning of the) orbit of $x$: for every $N\ge 1$, we set
\[\epsilon_N(x):=\frac{1}{N}\sum_{0\le n<N}\delta_{T^nx}\] 
(where $\delta_y$ denotes the Dirac measure on $y\in\XX$).\\
Since the space of probability measures on $\XX$ is compact for the weak-$*$ topology, we can always extract a convergent subsequence. Moreover, every subsequential limit of $(\epsilon_N(x))$ is a $T$-invariant probability measure. By the uniqueness of the $T$-invariant probability measure, for every $x\in \XX$ we have $\epsilon_N(x)\rightarrow \PP$. In other words, we have the convergence 
\begin{equation}\label{uniq_ergo+birk1 Zeck}
        \forall x\in \XX, \hspace{2mm} \forall f\in\mathcal{C}(\XX), \hspace{5mm} \frac{1}{N}\sum_{0\le n< N}f(T^nx) \tend{N}{+\infty}{}{\int_{\XX} f \mathrm{d}\PP}.
\end{equation}
We will be interested here in the special case $x=0$ because $\NN=\{T^n0:n\in\NN\}$. Then \eqref{uniq_ergo+birk1 Zeck} becomes
\begin{equation}\label{uniq_ergo+birk Zeck} 
        \forall f\in\mathcal{C}(\XX), \hspace{5mm} \frac{1}{N}\sum_{0\le n< N}f(n) \tend{N}{+\infty}{}{\int_{\XX} f \mathrm{d}\PP}.
\end{equation}
Equation \eqref{uniq_ergo+birk Zeck} shows that, for a continuous function $f$, averaging $f$ over $\NN$ (for the natural density) amounts to averaging over $\XX$ (for $\PP$). The next section shows how this convergence can be extended to some non-continuous functions related to the sum-of-digits function.

\section{Sum of digits on the Odometer}\label{SOzeck}

This section is very similar to the corresponding section in \cite{TREJYH} for the integer base case. The only differences are the need of the new Lemma~\ref{On Delta1 Zeck}. All the other results in this section are the same even though we need to adapt the proofs.
For every integer $k\ge 2$,  we define the continuous map $s_k:\XX\rightarrow\ZZ$ as the sum of the digits of indices $\le k$, that is to say 
\begin{align*}
    s_k(x):=x_k+\cdots +x_2.
\end{align*}
Let $r\in\NN$. We define the functions $\Delta^{(r)}_k:\XX\rightarrow\ZZ$ by 
\begin{align*}
    \Delta_k^{(r)}(x):=s_k(x+r)-s_k(x).
\end{align*}
The functions $\Delta_k^{(r)}$ are well-defined, continuous (and bounded) on $\XX$. By \eqref{uniq_ergo+birk Zeck}, we have 
\begin{align}\label{birkhoff_result_delta_k Zeck}
         \frac{1}{N}\sum_{n< N} \Delta_k^{(r)}(n)&=\frac{1}{N}\sum_{n< N} \Delta_k^{(r)}(T^n0)\tend{N}{+\infty}{}{\int_{\XX} \Delta_k^{(r)} \mathrm{d}\PP}.
\end{align}
Although the sum-of-digits function $s$ is not well defined on $\XX$, we can extend the function $\Delta^{(r)}$ defined by \eqref{def delta zeck} on the set of $x\in\XX$ for which the number of different digits between $x$ and $x+r$ is finite. This subset contains the $Z$-adic integers $x$ such that there exists an index $k\ge 2+\max(\{\ell: r_{\ell}\neq 0\})$ such that $x_{k}=x_{k+1}=0$ (see Lemma \ref{blocage_avancee}). So, except for a finite number of $Z$-adic integers, we can define 
\[\Delta^{(r)}(x):=\lim\limits_{k\to\infty}\Delta^{(r)}_k(x).\]
\begin{remarque}
Let $t,u$ be two integers. For every integer $k$ we have the decomposition formula 
\begin{align}\label{decomp_sumk Zeck}
    \Delta_k^{(t+u)}&=\Delta_k^{(t)}+\Delta_k^{(u)}\circ T^t.
\end{align}
So, taking $\PP$-almost everywhere the limit when $k$ tends to infinity, we get
\begin{align}\label{decomp_sum Zeck}
    \Delta^{(t+u)}&=\Delta^{(t)}+\Delta^{(u)}\circ T^t \hspace{2mm}  (\PP\textrm{-a-s.}).
\end{align}
Then, by induction on $t$, we deduce
\begin{align}\label{decomp_gene Zeck}
    \Delta^{(t)}&=\Delta^{(1)}+\Delta^{(1)}\circ T+\cdots+\Delta^{(1)}\circ T^{t-1} \hspace{2mm}  (\PP\textrm{-a-s.}).
\end{align}
\end{remarque}
$\Delta^{(r)}$ is not bounded on $\XX$ therefore it is not continuous. So, \eqref{uniq_ergo+birk Zeck} is not applicable for $f\circ \Delta^{(r)}$ with $f$ a continuous map on $\XX$. However, it is possible to get the same convergence as in \eqref{uniq_ergo+birk Zeck} with weaker assumptions on $f$ than continuity.
\begin{prop}\label{prop_moment Zeck}
Let $r\ge 1$ and $f:\ZZ\rightarrow\CC$. Assume that there exist  $\alpha\ge 1$ and $C$ in $\RR^*_+$  such that for every $n\in\ZZ$  
\begin{align}\label{hyp_prop_moment2 Zeck}
    \lvert f(n) \rvert &\le C\lvert n \rvert^{\alpha} +\lvert f(0)\rvert.
\end{align}
Then $f\circ \Delta^{(r)}\in L^1\left(\PP\right)$ and we have the convergence
\begin{align*}
    \lim\limits_{N\to\infty}\frac{1}{N}\sum_{n<N}f(\Delta^{(r)}(n))&=\int_{\XX} f(\Delta^{(r)}(x)) \mathrm{d}\PP(x)\\
            &= \lim\limits_{k\to\infty} \int_{\XX} f(\Delta^{(r)}_k(x)) \mathrm{d}\PP(x).
\end{align*}
\end{prop}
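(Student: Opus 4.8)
The plan is to transfer everything to the continuous bounded approximants $\Delta^{(r)}_k$, for which the ergodic average \eqref{uniq_ergo+birk Zeck} is directly available, and then to pass to the limit $k\to\infty$ in a controlled way on both the $\NN$-side and the $\XX$-side. For every $k\ge 2$ and $N\ge 1$ the triangle inequality gives
\begin{align*}
\left\lvert \frac{1}{N}\sum_{n<N}f(\Delta^{(r)}(n))-\int_{\XX}f(\Delta^{(r)})\,\mathrm{d}\PP\right\rvert
&\le \frac{1}{N}\sum_{n<N}\left\lvert f(\Delta^{(r)}(n))-f(\Delta^{(r)}_k(n))\right\rvert \\
&\quad+\left\lvert \frac{1}{N}\sum_{n<N}f(\Delta^{(r)}_k(n))-\int_{\XX}f(\Delta^{(r)}_k)\,\mathrm{d}\PP\right\rvert \\
&\quad+\left\lvert \int_{\XX}f(\Delta^{(r)}_k)\,\mathrm{d}\PP-\int_{\XX}f(\Delta^{(r)})\,\mathrm{d}\PP\right\rvert .
\end{align*}
Call the three terms $(\mathrm{I})$, $(\mathrm{II})$, $(\mathrm{III})$. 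I will show that $(\mathrm{II})\to 0$ as $N\to\infty$ for each fixed $k$, that $(\mathrm{III})\to 0$ as $k\to\infty$, and that $\limsup_{N}(\mathrm{I})\to 0$ as $k\to\infty$; an $\varepsilon/3$ argument (first fix $k$, then let $N\to\infty$) then yields the first asserted convergence, while the vanishing of $(\mathrm{III})$ is exactly the second asserted equality.

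The first step is a moment bound uniform in $k$. Iterating \eqref{decomp_sumk Zeck} gives $\Delta^{(r)}_k=\sum_{j=0}^{r-1}\Delta^{(1)}_k\circ T^j$, and since adding $1$ produces at most one new digit of index $\le k$ we get $\Delta^{(r)}_k\le r$ for all $k$. For the negative tail, the event $\{\Delta^{(r)}_k\le -t\}$ forces the addition of $r$ to erase a long maximal alternating block near the units; by the carry analysis of Subsection~\ref{subsection Focus} this requires a word of length growing linearly in $t$ that contains no stopping pattern $\overline{00}$, an event of $\PP$-measure decaying geometrically by the cylinder formula \eqref{caract mesure zeck} of Proposition~\ref{prop caract mesure zeck}. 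Hence there are $A,\lambda>0$, depending on $r$ but not on $k$, with $\PP(\lvert\Delta^{(r)}_k\rvert>t)\le A\lambda^{t}$, so $\sup_k\lVert\Delta^{(r)}_k\rVert_{L^{\beta}(\PP)}<\infty$ for every $\beta\ge 1$. Since $\Delta^{(r)}_k\to\Delta^{(r)}$ $\PP$-almost everywhere, Fatou's lemma transfers these bounds to the limit, giving $\Delta^{(r)}\in L^{\beta}(\PP)$ for all $\beta$; together with the growth hypothesis \eqref{hyp_prop_moment2 Zeck} this already proves $f\circ\Delta^{(r)}\in L^{1}(\PP)$.

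Term $(\mathrm{III})$ then follows from uniform integrability: by \eqref{hyp_prop_moment2 Zeck} the family $\{f\circ\Delta^{(r)}_k\}_k$ is dominated by $C\lvert\Delta^{(r)}_k\rvert^{\alpha}+\lvert f(0)\rvert$, and the uniform $L^{\beta}$ bound from the previous step with some $\beta>\alpha$ makes this family uniformly integrable; combined with the $\PP$-a.e. convergence $\Delta^{(r)}_k\to\Delta^{(r)}$ it yields $\int f(\Delta^{(r)}_k)\,\mathrm{d}\PP\to\int f(\Delta^{(r)})\,\mathrm{d}\PP$. Term $(\mathrm{II})$ is immediate, since for fixed $k$ the function $f\circ\Delta^{(r)}_k$ is continuous and bounded on $\XX$, so \eqref{uniq_ergo+birk Zeck} applies.

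The delicate point, and the main obstacle, is $(\mathrm{I})$: the Cesàro average over $\NN$ of $\lvert f(\Delta^{(r)}(n))-f(\Delta^{(r)}_k(n))\rvert$, whose summand is unbounded and whose limit is only $\PP$-a.e.\ defined. The starting observation is that $\Delta^{(r)}(n)-\Delta^{(r)}_k(n)=\sum_{j>k}(n+r)_j-\sum_{j>k}n_j$ vanishes unless adding $r$ to $n$ modifies some digit of index $>k$, which by Lemma~\ref{blocage_avancee} cannot occur once a stopping pattern $\overline{00}$ appears in $n$ between the support of $r$ and index $k$; the set of such ``bad'' $n$ is a union of cylinders whose density is exactly its $\PP$-measure and decays geometrically in $k$. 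I would therefore bound $(\mathrm{I})$ by the $\NN$-average of a continuous truncation of $\lvert f\circ\Delta^{(r)}-f\circ\Delta^{(r)}_k\rvert$ supported on this bad set, use \eqref{uniq_ergo+birk Zeck} on that truncation to replace the $\NN$-average by a $\PP$-integral, and then let $k\to\infty$ so that the $\PP$-mass of the bad set, weighted by the polynomial cost $\lvert\Delta^{(r)}\rvert^{\alpha}$, tends to $0$. Producing a genuinely continuous majorant that still carries the polynomial weight — equivalently, taming the interaction between the truncation level, the index $k$, and the range $N$ — is the technical heart of the proof, and it is precisely here that the uniformity in $k$ of the tail estimate from the moment step, rather than the mere finiteness of each individual moment, is indispensable.
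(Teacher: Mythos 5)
Your decomposition into (I), (II), (III) is the same as the paper's $A_1,A_2,A_3$, and two of the three terms are handled correctly and essentially as in the paper: (II) follows from unique ergodicity applied to the continuous bounded function $f\circ\Delta^{(r)}_k$, and (III) from a geometric tail bound for $\Delta^{(r)}_k$ uniform in $k$ together with uniform integrability — this is the same mechanism as the paper's dominated-convergence argument, where the dominant is built from $g_i:=\sup_{k\ge 2}\lvert\Delta^{(1)}_k\circ T^i\rvert$ via \eqref{decomp_sumk Zeck} and Lemma~\ref{On Delta1 Zeck}, and which also yields $f\circ\Delta^{(r)}\in L^1(\PP)$. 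But for (I) you only state a plan, and you yourself flag its key step — producing a continuous majorant that carries the polynomial weight — as unresolved. That is a genuine gap, and it sits exactly where the paper does something you have not reproduced. The difficulty is real: \eqref{uniq_ergo+birk Zeck} applies only to continuous functions, and your $\PP$-tail estimate, however uniform in $k$, says nothing a priori about Ces\`aro averages over $\NN$ of the unbounded, discontinuous function $\lvert f\circ\Delta^{(r)}-f\circ\Delta^{(r)}_k\rvert$: for a fixed $N$ the orbit $\{0,\dots,N-1\}$ could overrepresent the deep layers of your ``bad set'', and passing $N\to\infty$ through the infinite layered sum over truncation levels requires a bound that is uniform in $N$, which unique ergodicity alone does not give.

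The paper supplies precisely this missing ingredient as Lemma~\ref{lemme_technique Zeck}: for every $N\ge 1$, $k\ge 2$ and $(d,d')\in\ZZ^2$,
\begin{equation*}
\frac{1}{N}\left\lvert\left\{n<N:\left(\Delta^{(r)}(n),\Delta^{(r)}_k(n)\right)=(d,d')\right\}\right\rvert\le r\varphi^3\,\PP\left(\left(\Delta^{(r)},\Delta^{(r)}_k\right)=(d,d')\right),
\end{equation*}
proved via the Rokhlin towers by locating the order at which the value $(d,d')$ first appears on a full level. Summing this over $(d,d')$ weighted by $\lvert f(d)-f(d')\rvert$ gives $A_1\le r\varphi^3 A_3$ uniformly in $N$, so the single dominated-convergence argument on $A_3$ closes the whole proof — no continuous truncation of the unbounded part is ever needed. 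To repair your argument you must either prove this lemma or an equivalent uniform counting estimate (e.g.\ that the number of $n<N$ lying in a given cylinder of order $m$ is at most $N/F_m+1$, then sum the layers of your bad set against the geometric decay and check that the accumulated $O(1/N)$ boundary terms vanish, the number of relevant layers being $O(\log N)$); without such a uniform-in-$N$ comparison between empirical densities of cylinders and their $\PP$-measures, your $\varepsilon/3$ scheme does not close.
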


\begin{corollaire}\label{corollaire_prop_moment Zeck}
 For every $d\in\ZZ$
\begin{align}\label{lien_mu^r_et_mu Zeck}
        \mu^{(r)}(d)&:=\lim\limits_{N\to\infty}\frac{1}{N} \left|\left\{n<N: \Delta^{(r)}(n)=d\right\}\right| \notag \\
        &=\PP\left(\left\{x\in \XX: \Delta^{(r)}(x)=d\right\}\right).
\end{align}
Moreover, $\Delta^{(r)}$ has zero-mean and has finite moments. 
\end{corollaire}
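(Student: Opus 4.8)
The plan is to deduce all three assertions as direct specialisations of Proposition~\ref{prop_moment Zeck}, choosing in each case a test function $f:\ZZ\to\CC$ satisfying the growth hypothesis \eqref{hyp_prop_moment2 Zeck}. For the density formula, I would apply the proposition to the indicator $f=\mathbbm{1}_{\{d\}}$. This $f$ is bounded, so \eqref{hyp_prop_moment2 Zeck} holds trivially (e.g. with $\alpha=1$ and $C$ large enough, distinguishing $d=0$, where $f(0)=1$ and $|f(n)|\le |f(0)|$, from $d\neq 0$, where $f(0)=0$ and one only needs $1\le C|d|^{\alpha}$). The conclusion of Proposition~\ref{prop_moment Zeck} then reads
\[\lim_{N\to\infty}\frac{1}{N}\sum_{n<N}\mathbbm{1}_{\{d\}}(\Delta^{(r)}(n))=\int_{\XX}\mathbbm{1}_{\{d\}}(\Delta^{(r)}(x))\,\mathrm{d}\PP(x).\]
The left-hand average is exactly $\frac{1}{N}\lvert\{n<N:\Delta^{(r)}(n)=d\}\rvert$, whose limit is $\mu^{(r)}(d)$, while the right-hand side is $\PP(\{x\in\XX:\Delta^{(r)}(x)=d\})$; this is precisely \eqref{lien_mu^r_et_mu Zeck}.

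For the finiteness of the moments, I would take $f(n)=\lvert n\rvert^{p}$ for an arbitrary integer $p\ge 1$, which satisfies \eqref{hyp_prop_moment2 Zeck} with $\alpha=p$, $C=1$ and $f(0)=0$. Proposition~\ref{prop_moment Zeck} then gives $\lvert\Delta^{(r)}\rvert^{p}\in L^{1}(\PP)$, i.e.
\[\sum_{d\in\ZZ}\lvert d\rvert^{p}\,\mu^{(r)}(d)=\int_{\XX}\lvert\Delta^{(r)}(x)\rvert^{p}\,\mathrm{d}\PP(x)<\infty,\]
where the first equality uses the density formula just established. Hence every moment of $\mu^{(r)}$ is finite.

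Finally, for the zero-mean property, I would apply the proposition to the identity $f(n)=n$, which satisfies \eqref{hyp_prop_moment2 Zeck} with $\alpha=1$. Using the second equality in the conclusion of Proposition~\ref{prop_moment Zeck}, the mean reduces to $\int_{\XX}\Delta^{(r)}\,\mathrm{d}\PP=\lim_{k\to\infty}\int_{\XX}\Delta_k^{(r)}\,\mathrm{d}\PP$. The key observation is that $x\mapsto x+r$ is precisely $T^{r}$ and $\PP$ is $T$-invariant, so $T^{r}$ preserves $\PP$; the change of variables $y=T^{r}x$ then yields $\int_{\XX}s_k(x+r)\,\mathrm{d}\PP(x)=\int_{\XX}s_k(y)\,\mathrm{d}\PP(y)$, whence $\int_{\XX}\Delta_k^{(r)}\,\mathrm{d}\PP=0$ for every $k$. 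Passing to the limit in $k$ gives $\int_{\XX}\Delta^{(r)}\,\mathrm{d}\PP=0$.

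I expect no serious obstacle here: the whole statement is a repackaging of Proposition~\ref{prop_moment Zeck}. The only points needing a moment's care are the (routine) verification of \eqref{hyp_prop_moment2 Zeck} for the bounded indicator and for the power functions, and the invariance computation $\int_{\XX}s_k\circ T^{r}\,\mathrm{d}\PP=\int_{\XX}s_k\,\mathrm{d}\PP$. This last identity is worth emphasising, since it shows that the vanishing of the mean comes from the $T$-invariance of $\PP$ together with the splitting $\Delta_k^{(r)}=s_k\circ T^{r}-s_k$, rather than from any symmetry of the measure $\mu^{(r)}$ itself.
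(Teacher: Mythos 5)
Your proof is correct and follows essentially the same route as the paper, which simply notes that the argument is identical to the one in \cite{TREJYH}: specialise Proposition~\ref{prop_moment Zeck} to $f=\mathbbm{1}_{\{d\}}$ for \eqref{lien_mu^r_et_mu Zeck}, to $f(n)=\lvert n\rvert^{p}$ for the finiteness of moments, and obtain the zero mean from the splitting $\Delta^{(r)}_k=s_k\circ T^{r}-s_k$ together with the $T$-invariance of $\PP$. Your verification of the growth hypothesis \eqref{hyp_prop_moment2 Zeck} for the indicator and power functions, and the invariance computation $\int_{\XX}s_k\circ T^{r}\,\mathrm{d}\PP=\int_{\XX}s_k\,\mathrm{d}\PP$, are exactly the intended steps.
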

The proof of this corollary is exactly the same as in \cite{TREJYH}. In particular, we just prove the existence of the asymptotic densities of the sets $\{n\in\NN~\lvert~ \Delta^{(r)}(n)=d\}$, where $d\in\ZZ$. In integer base, it is easy to prove (see \cite{JB} or \cite{TREJYH}). It is harder here to follow Besineau's proof because it uses the arithmetic properties of the sum-of-digits  (in integer base) function that we do not have anymore. 
\begin{remarque}\label{et pour 0 ? Zeck}
    Using trivial arguments, Proposition~\ref{prop_moment Zeck} and Corollary~\ref{corollaire_prop_moment Zeck} are also true when $r=0$. We observe that $\mu^{(0)}=\delta_0$.
\end{remarque}
Before proving this proposition and its corollary, we need the following lemma that looks like Lemma~1.29 in \cite{LS}.
\begin{lemme}\label{lemme_technique Zeck}
    Let $r\ge 1$. For $N\ge 1$, $k\ge 2$ and $d,d'\in\ZZ$, we have the inequality
    \begin{equation}\label{ineg_techni2 Zeck}
        \begin{split}
             \frac{1}{N}\left| \{n<N: (\Delta^{(r)}(n),\Delta^{(r)}_k(n))=(d,d')\} \right| & \\
             & \hspace{-3 cm}\le r\varphi^3  \hspace{1mm}\PP \hspace{-1mm} \left( \{x\in \XX: (\Delta^{(r)}(x),\Delta^{(r)}_k(x))=(d,d')\}\right).
        \end{split}
    \end{equation}
    In particular, we have
    \begin{align}\label{ineg_techni Zeck}
        \frac{1}{N}\left| \{n<N: \Delta^{(r)}(n)=d\} \right| \le r\varphi^3 \hspace{1mm} \PP \hspace{-1mm} \left( \{x\in \XX: \Delta^{(r)}(x)=d\}\right).
    \end{align}
\end{lemme}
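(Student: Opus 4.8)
The plan is to prove the finer statement \eqref{ineg_techni2 Zeck} for each pair $(d,d')$ and then deduce \eqref{ineg_techni Zeck} by summing over $d'\in\ZZ$, since the events $\{(\Delta^{(r)},\Delta^{(r)}_k)=(d,d')\}$ partition $\{\Delta^{(r)}=d\}$ and the right-hand sides add up to $r\varphi^3\,\PP(\{\Delta^{(r)}=d\})$. The first step is to make precise that the pair $(\Delta^{(r)},\Delta^{(r)}_k)$ depends, for $\PP$-almost every $x$, on only finitely many digits of $x$. Let $L$ be the largest index of a nonzero digit of $r$, and for $x\in\XX$ let $\ell(x)$ be the smallest index $\ge L$ with $x_{\ell(x)+2}=x_{\ell(x)+3}=0$. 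By Lemma~\ref{blocage_avancee}, adding $r$ leaves every digit of index $\ge \ell(x)+3$ unchanged, so $\Delta^{(r)}(x)=\Delta^{(r)}_{\ell(x)+1}(x)$, and the pair $(\Delta^{(r)}(x),\Delta^{(r)}_k(x))$ becomes a function of $x_2,\dots,x_{\max(k,\ell(x)+1)+1}$. Consequently the set $E:=\{x:(\Delta^{(r)}(x),\Delta^{(r)}_k(x))=(d,d')\}$ is, up to a $\PP$-null set, a countable disjoint union of cylinders $\bigsqcup_W C_W$, each determined by a finite Zeckendorf word $W$ fixing the low-order digits.

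The heart of the argument is a comparison, for a single cylinder, between its natural density along $\NN$ and its $\PP$-measure. For a cylinder $C_W$ of order $m$, an integer $n\in C_W$ is exactly one of the form $n=v(W)+h$, where $v(W)=\sum_{j=2}^{m}W_jF_j$ and $h$ is a \emph{high part}, i.e. a sum of non-consecutive Fibonacci terms of index $\ge m+1$ (with the digit at position $m+1$ forced to $0$ when $W_m=1$). Shifting all indices down by $m-1$ sets up a bijection between high parts and ordinary nonnegative integers, under which the weight $F_{i+m-1}$ is replaced by $F_i$. Since Binet's formula gives $F_{i+m-1}/F_i\to\varphi^{m-1}$ with the ratios trapped in a bounded window, the number of admissible high parts below a given bound is comparable to the corresponding count of ordinary integers rescaled by $\varphi^{-(m-1)}$. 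As $\PP(C_W)=\varphi^{-(m-1)}$ or $\varphi^{-m}$ by Proposition~\ref{prop caract mesure zeck}, this yields a per-cylinder bound of the shape $\#\{n<N:n\in C_W\}\le \varphi^3\,N\,\PP(C_W)+O(1)$, with constant independent of $W$ and $N$.

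The remaining and most delicate step is to sum these estimates over the infinitely many, variable-length cylinders constituting $E$ and to absorb the boundary terms, so as to reach the inequality with the uniform constant $r\varphi^3$. The naive sum of the $O(1)$ contributions is far too lossy, so instead I would organise $[0,N)$ through the Rokhlin tower structure of Subsection~\ref{RTEM}: climbing a full tower by $T$ reproduces the successive additions of $1$ to consecutive integers, and on the levels where no carry overflows the tower the number of levels lying in $E$ is exactly $\PP(E)$ times the tower height, matching $N\,\PP(E)$ with constant $1$. The discrepancy comes entirely from the integers $n$ near a structural boundary for which $n$ and $n+r$ straddle a double-zero that the addition of $r$ creates or destroys; there are at most of order $r$ such exceptional integers per boundary, and this is precisely where the linear factor $r$ enters, while the worst-case ratio in the per-level count supplies the $\varphi^3$. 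Controlling this exceptional band uniformly in $N$ — showing it contributes at most $r\varphi^3\,N\,\PP(E)$ rather than an uncontrolled additive error — is the main obstacle; it rests on the renewal property (Proposition~\ref{renouvellement}) together with the two-sided carry-stopping results (Corollary~\ref{blocage recul}), which guarantee that a carry triggered by adding $r$ cannot propagate past a double-zero and hence confine the exceptional integers to short, well-understood windows.
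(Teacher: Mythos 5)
Your reduction of \eqref{ineg_techni Zeck} to \eqref{ineg_techni2 Zeck} by summing over $d'$ is correct and is exactly how the paper proceeds, and your observation that $(\Delta^{(r)},\Delta^{(r)}_k)$ depends $\PP$-a.e.\ on finitely many digits is sound. But the heart of the lemma is a multiplicative inequality valid for \emph{every} $N\ge 1$, with no additive error, and your proposal leaves precisely that step open — you say so yourself (``the main obstacle''). The per-cylinder estimate $\#\{n<N:n\in C_W\}\le \varphi^3 N\,\PP(C_W)+O(1)$ cannot be summed over the countably many cylinders of unbounded order composing $E:=\{(\Delta^{(r)},\Delta^{(r)}_k)=(d,d')\}$, as you note; and your fallback claim that within a full tower the number of levels lying in $E$ is ``exactly $\PP(E)$ times the tower height'' is false: a level of the order-$m$ towers has measure $\varphi^{-m}$ or $\varphi^{-(m+1)}$ while the heights are $F_{m+1}$ and $F_m$, so per-pass visit frequencies and measures agree only up to bounded ratios, and for $N$ not aligned with tower heights the discrepancy worsens. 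The tools you invoke for the exceptional band (Proposition~\ref{renouvellement}, Corollary~\ref{blocage recul}) control carry propagation, not counting, and do not by themselves produce a bound of the form $r\varphi^3 N\,\PP(E)$ uniformly in $N$; note that already for $N=1$ any additive $O(1)$ term can exceed the target, so the error terms must be eliminated entirely, not merely controlled.

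The missing idea is the paper's stratification by \emph{first appearance order}. For each $\ell$, let $V_\ell$ be the set of values taken by the pair $(\Delta^{(r)},\Delta^{(r)}_k)$ on all but the $r$ top levels of the order-$\ell$ towers (on such levels the pair is constant, since the low digits of $x$ and of $x+r$ are fixed there). The sequence $(V_\ell)$ is increasing; if $(d,d')$ is never attained the inequality is trivial, and otherwise there is a unique $\ell$ with $(d,d')\in V_\ell\setminus V_{\ell-1}$. This yields a two-sided comparison at a single scale: on the measure side, by the Cut and Stack process the value first appears on an entire level of the large tower of order $\ell$, so $\PP(E)\ge \varphi^{-\ell}$; on the counting side, since $(d,d')\notin V_{\ell-1}$, every $n$ realizing the value satisfies $T^n0\in C$, where $C$ is the union of the $r$ top levels of the order-$(\ell-1)$ towers, and because $0$ lies at the bottom of the large tower, the visit times to $C$ form blocks of $r$ consecutive integers separated by gaps of length at least $F_{\ell-1}-r$, whence $\frac1N\lvert\{n<N: T^n0\in C\}\rvert\le r/F_{\ell-1}\le r\varphi^{3-\ell}$ for every $N\ge 1$ (using $F_{\ell-1}\ge\varphi^{\ell-3}$). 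Combining the two bounds gives \eqref{ineg_techni2 Zeck} exactly, with no boundary terms to absorb. Without this first-appearance argument — or some substitute providing a lower bound on $\PP(E)$ matched to an upper density bound uniform in $N$ — your outline does not close.
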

\begin{proof}[Proof of Lemma~\ref{lemme_technique Zeck}]
    We adapt the proof of Lemma~2.3 in \cite{TREJYH}. First, \eqref{ineg_techni2 Zeck} implies \eqref{ineg_techni Zeck} so we just prove \eqref{ineg_techni2 Zeck}. We fix $k\ge 2$. For $\ell\ge 1$, let $V_{\ell}$ be the set of the values reached by the couple $(\Delta^{(r)},\Delta^{(r)}_k)$ on the $F_{\ell+1}-r$ (resp. $F_{\ell}-r$) first levels of the large (resp. small) tower of order $\ell$. Of course, if $F_{\ell+1}-r\le 0$ then $V_{\ell}:=\emptyset$. In particular, for any $r\ge 1$, $V_1=\emptyset$. Also, if $F_{\ell}\le r<F_{\ell+1}$ then $V_{\ell}$ is defined considering only the large tower of order $\ell$. For any $\ell$, we observe that $V_{\ell}$ is a finite set. On each level that is not in the $r$ top levels of the large or small tower, the first $\ell$ digits of both $x$ and $x+r$ are constant, and digits of higher order are the same. Therefore, $\Delta^{(r)}$ and $\Delta^{(r)}_k$ are constant on such a level. We observe that the sequence $(V_{\ell})_{\ell\ge 1}$ is increasing for the inclusion.
    
    Now, for $d,d'\in\ZZ$, there are $2 $ cases.
    \begin{enumerate}
        \item If $(d,d')\notin\cup_{\ell\ge 2} V_{\ell}$, then for each $n\in\NN$ we have $(\Delta^{(r)}(n),\Delta^{(r)}_k(n))\neq(d,d')$. Indeed, for each $n\in\NN$, there exists a smallest integer $\ell\ge 2$ such that $n$ is in the first $F_{\ell+1}-r$ levels of the large tower of order $\ell$, hence $(\Delta^{(r)}(n),\Delta^{(r)}_k(n)) \in V_{\ell}$. In this case, \eqref{ineg_techni2 Zeck} is trivial.
        \item If $(d,d')\in\cup_{\ell\ge 1} V_{\ell}$ then there exists a unique $\ell\ge 2$ such that $(d,d')\in V_{\ell}\backslash V_{\ell-1}$. We observe that, due to the Cut and Stack process, the value $(d,d')$ must appear firstly in the large tower of order $\ell$. Since $(\Delta^{(r)},\Delta^{(r)}_k)$ is constant on each of the first $F_{\ell+1}-r$ levels of the large tower, it takes the value $(d,d')$ on at least one whole such level which is of measure $\frac{1}{\varphi^{\ell}}$. So, we have  
        \begin{align}\label{etap1 lemme tech zeck}
            \PP\hspace{-1mm} \left( \{x\in \XX: (\Delta^{(r)}(x),\Delta^{(r)}_k(x))=(d,d')\}\right)\ge \frac{1}{\varphi^{\ell}}.
        \end{align}
    Also, since the couple $(d,d')\notin V_{\ell-1}$, we claim that, for every $N\ge 1$
    \[\frac{1}{N}\left| \{n<N: (\Delta^{(r)}(n),\Delta^{(r)}_k(n))=(d,d')\} \right|\le \frac{r}{F_{\ell-1}}.\]
    Indeed, 
    \begin{enumerate}
        \item If $r\ge F_{\ell-1}$, the inequality is then trivial.
        \item If $r< F_{\ell-1}$ then, since $(d,d')$ is not in $V_{\ell-1}$, $(d,d')$ can only appear inside a part the  $r$ highest levels of the big or the small towers of order $\ell-1$. Let us denote $C$ the union of these $r$ highest levels. Since $0$ lies in the bottom level of the large tower of order $\ell-1$, the set $S$ of integers $n\ge 0$ such that $T^n0\in C$ has the following properties :
        \begin{itemize}
            \item $\{0,\cdots,F_{\ell}-r-1\}\cap S=\emptyset$ and
            \item $S$ is the union of subsets formed by $r$ consecutive integers separated by gaps of length $F_{\ell}-r$ or $F_{\ell-1}-r$.
        \end{itemize}
        So, we have
        \begin{align*}
            \frac{1}{N}\left| \{0\le n<N: (\Delta^{(r)}(n),\Delta^{(r)}_k(n))=(d,d')\} \right|&\\
            &\hspace{-1.38cm}\le \frac{1}{N}\left| \{0\le n<N: T^n0\in C \}\right|\\
            &\hspace{-1.38cm}\le\frac{r}{F_{\ell-1}}.
        \end{align*}
    \end{enumerate}
    Since $F_{\ell-1}\ge \varphi^{\ell-3}$ (by double induction), we get
    \begin{align}\label{etap2 lemme tech zeck}
        \frac{1}{N}\left| \{n<N: (\Delta^{(r)}(n),\Delta^{(r)}_k(n))=(d,d')\} \right|\le \frac{r}{\varphi^{\ell-3}}.
    \end{align}
    Combining inequalities \eqref{etap1 lemme tech zeck} and \eqref{etap2 lemme tech zeck} gives \eqref{ineg_techni2 Zeck}.
    \end{enumerate}
    \begin{figure}[H]
        \centering
        \includegraphics[scale=0.33]{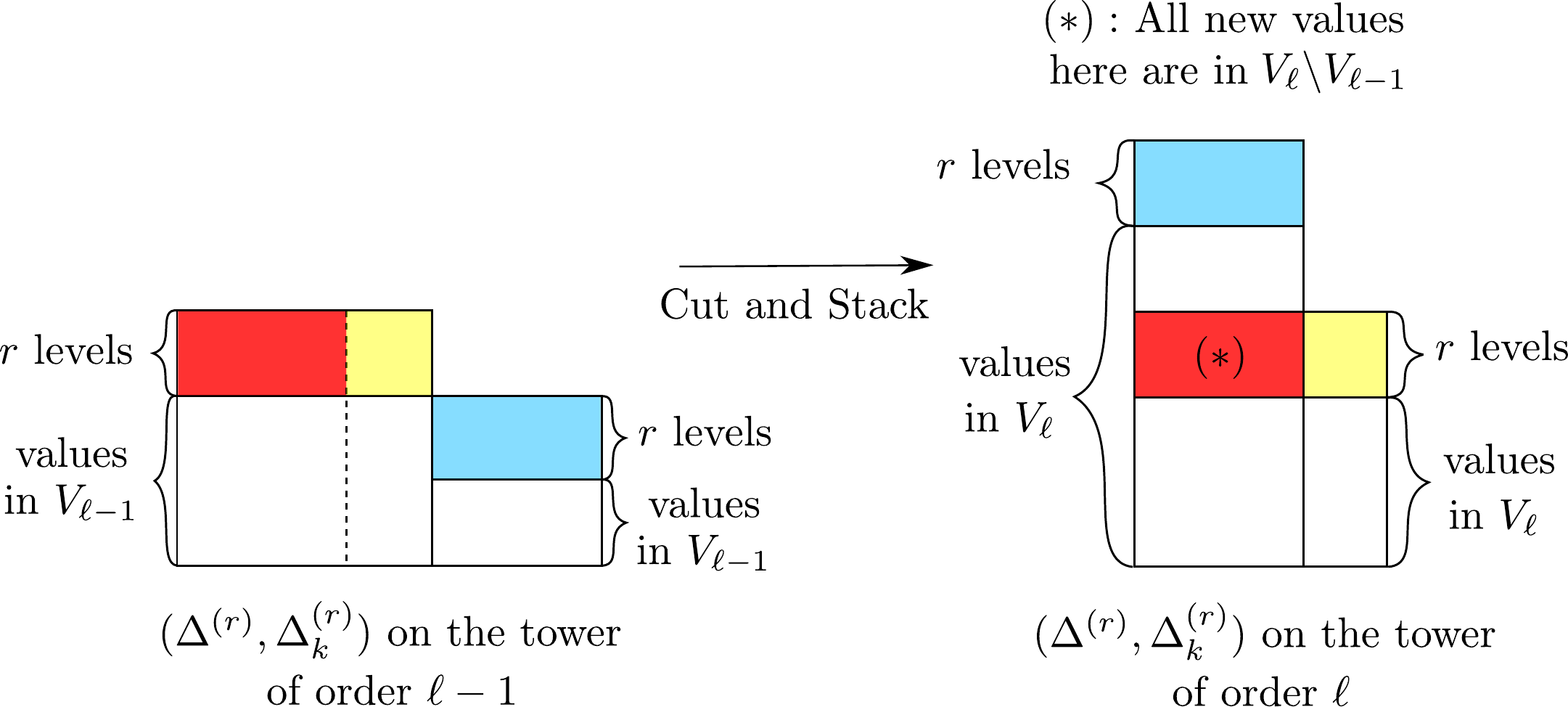}
        \caption{Visual description of $V_{\ell-1}$, $V_{\ell}$ and $V_{\ell}\backslash{V_{\ell-1}}$.}
        \label{Vl Zeck}
    \end{figure}
\end{proof}
As shown in \eqref{decomp_gene Zeck}, the understanding of $\Delta^{(1)}$ (and so $\Delta_k^{(1)}$ for $k\ge 2$) is fundamental to understand $\Delta^{(r)}$ so we also state the following lemma.
\begin{lemme}\label{On Delta1 Zeck}
    The function $\Delta^{(1)}$ is well-defined for $x\in\XX$ if and only if $x\in C_{00(10)^{d}}\cup C_{001(01)^d}$ for some $d\ge 0$. 
    
    Furthermore, if $x\in C_{00(10)^{d}}$ then $\Delta^{(1)}(x)=1-d$ and 
    \[\Delta^{(1)}_k(x)=\left\{\begin{array}{ll}
        \frac{2-k}{2} & \textrm{ if } k\equiv 0 [2] \textrm{ and } k\le 2d,  \\
        \frac{1-k}{2} & \textrm{ if } k\equiv 1 [2] \textrm{ and } k\le 2d+1,  \\
        1-d & \textrm{ if } k\ge 2d+2.  \\
    \end{array}\right.\]
    
    Also, if $x\in C_{001(01)^{d}}$ then $\Delta^{(1)}(x)=-d$ and 
    \[\Delta^{(1)}_k(x)=\left\{\begin{array}{ll}
        \frac{-k}{2} & \textrm{ if } k\equiv 0 [2] \textrm{ and } k\le 2d+2, \\
        \frac{1-k}{2} & \textrm{ if } k\equiv 1 [2] \textrm{ and } k\le 2d+1,  \\
        \frac{3-k}{2} & \textrm{ if } k=2d+3,\\
        -d & \textrm{ if } k\ge 2d+4.  \\
    \end{array}\right.\]
\end{lemme}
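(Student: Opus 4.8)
The plan is to read off every assertion from the rule for adding $1$ given in Subsection~\ref{HTAI}, using that $\Delta^{(1)}_k(x)=s_k(x+1)-s_k(x)$ with $x+1=T(x)$, and that $\Delta^{(1)}(x)=\lim_k\Delta^{(1)}_k(x)$.

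First I would settle the domain of definition. The limit defining $\Delta^{(1)}(x)$ exists and is finite exactly when $x$ and $x+1$ differ in only finitely many digits, i.e.\ when the leftward carry produced by adding $1$ eventually halts. By Subsection~\ref{HTAI} this carry runs through the rightmost maximal alternating block of $x$ and stops at the first pattern $\overline{00}$; hence $\Delta^{(1)}(x)$ is defined if and only if the word $x$ contains $\overline{00}$. The only elements of $\XX$ having no $\overline{00}$ are, since no $\overline{11}$ can occur either, the two alternating sequences $\overline{(10)^{\infty}}$ and $\overline{(01)^{\infty}}$; for these $T(x)=\overline{0^{\infty}}$, so $\Delta^{(1)}_k(x)=-s_k(x)\to-\infty$ and the limit fails. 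Conversely, if $x$ contains $\overline{00}$, let $j\ge2$ be minimal with $x_j=x_{j+1}=0$. By minimality the prefix $x_2\cdots x_{j-1}$ contains no $\overline{00}$, and it contains no $\overline{11}$, so it is alternating and thus entirely determined by $x_2$: if $x_2=0$ then $j$ is even and $x\in C_{00(10)^{d}}$ with $d=(j-2)/2$, while if $x_2=1$ then $j$ is odd and $x\in C_{001(01)^{d}}$ with $d=(j-3)/2$. This is exactly the claimed characterisation.

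Next I would compute the two limiting values. Specialising Case~1 (resp.\ Case~2) of Subsection~\ref{HTAI} with $\ell=d$ gives $x+1$ explicitly: the altered block of $x$ carries $d$ ones (resp.\ $d+1$ ones), whereas $x+1$ carries a single one on that block, and all digits of higher index are untouched. Counting therefore yields $\Delta^{(1)}(x)=1-d$ for $x\in C_{00(10)^{d}}$ and $\Delta^{(1)}(x)=-d$ for $x\in C_{001(01)^{d}}$.

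Finally, the formulas for $\Delta^{(1)}_k$ are a direct count. In both families $x+1$ is all zeros on the altered block, so as long as $k$ stays inside that block one has $\Delta^{(1)}_k(x)=-s_k(x)$, and $s_k(x)$ is simply the number of ones of the alternating part among positions $2,\dots,k$, a quantity depending only on the parity of $k$; this produces the half-integer values. Once $k$ reaches the top of the block the unique new $1$ of $x+1$ is counted and all higher digits of $x$ and $x+1$ coincide, so $\Delta^{(1)}_k$ freezes at the limiting value $1-d$ (resp.\ $-d$). The only point demanding care is the isolated case $k=2d+3$ in the second family: there the generic odd formula $\tfrac{1-k}{2}$ no longer applies because the new $1$ of $x+1$ already sits at position $2d+3$, so $s_k(x+1)=1$ rather than $0$ and one gets $\tfrac{3-k}{2}=-d$. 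This boundary/parity bookkeeping is the main (and essentially only) obstacle; the conceptual content is confined to the ``if and only if''.
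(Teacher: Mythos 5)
Your proof is correct and follows essentially the same route as the paper: both read everything off the explicit description of adding $1$ from Subsection~\ref{HTAI} (equivalently, the definition of $T$), identifying $x+1\in C_{01(00)^{d}}$ resp.\ $C_{010(00)^{d}}$ and counting ones, with the two alternating sequences $\overline{(10)^{\infty}}$, $\overline{(01)^{\infty}}$ as the only points where $(\Delta^{(1)}_k(x))$ diverges. Your minimal-index argument for the ``only if'' direction and the explicit parity bookkeeping for $\Delta^{(1)}_k$ (including the boundary case $k=2d+3$) are just careful write-ups of steps the paper's proof states without detail.
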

\begin{proof}[Proof of Lemma \ref{On Delta1 Zeck}]
    If $x\in C_{00(10)^{d}}$ for some $d\ge 0$, the definition of $T$ gives that $x+1\in C_{01(00)^{d}}$ with the other digits unchanged. Thus the sequence $(\Delta^{(1)}_k(x))$ is stationary, $\Delta^{(1)}(x)$ is well defined and $\Delta^{(1)}(x)=1-d$. Also, if $x\in C_{001(01)^{d}}$ for some $d\ge 0$, then $x+1\in C_{010(00)^{d}}$, $\Delta^{(1)}(x)$ is well defined and $\Delta^{(1)}(x)=-d$. If $x$ does not belong to those cylinders then $x=(01)^{\infty}$ or  $x=(10)^{\infty}$. In that case, $x+1=\overline{0^{\infty}}$ and the sequence $(\Delta^{(1)}_k(x))$ diverges to $-\infty$. This proves the equivalence. The values of $\Delta_k^{(1)}(x)$ can be easily found by looking at the addition we write in \ref{HTAI}.
\end{proof}
\begin{proof}[Proof of Proposition \ref{prop_moment Zeck}]
We adapt the proof of Proposition 2.1 in \cite{TREJYH}. Let $\varepsilon>0$. For any integer $k\ge 2$, we have
\begin{align*}
    \left\vert\frac{1}{N}\sum_{n<N} f(\Delta^{(r)}(n))-\int_{\XX} f(\Delta^{(r)}(x)) \mathrm{d}\PP(x)\right\vert 
        &\le A_1+A_2+A_3,
\end{align*}
where
\begin{align*}
    A_1 &:= \displaystyle\frac{1}{N}\sum_{n<N} \left\vert f(\Delta^{(r)}(n))-f(\Delta^{(r)}_k(n))\right\vert, \\ 
    A_2 &:= \displaystyle\left\vert\frac{1}{N}\sum_{n<N}f(\Delta^{(r)}_k(n))-\int_{\XX} f(\Delta^{(r)}_k(x)) \mathrm{d}\PP(x)\right\vert, \\
    A_3 &:=\displaystyle\int_{\XX} \left\vert f(\Delta^{(r)}(x))-f(\Delta^{(r)}_k(x))\right\vert \mathrm{d}\PP(x).
\end{align*}
However, as in \cite{TREJYH}, we have
\begin{align*}
    A_1&=\frac{1}{N}\sum_{n<N} \sum_{j,j'\in\ZZ}\Big\vert f(j)-f(j')\Big\vert\mathbbm{1}_{(j,j')}\left(\Delta^{(r)}(n),\Delta^{(r)}_k(n)\right)\\
    &= \sum_{j,j'\in\ZZ}\Big\vert f(j)-f(j')\Big\vert\frac{1}{N}\sum_{n<N}\mathbbm{1}_{(j,j')}\left(\Delta^{(r)}(n),\Delta^{(r)}_k(n)\right)\\
    &\le r \varphi^3\sum_{j,j'\in\ZZ}\Big\vert f(j)-f(j')\Big\vert \hspace{1mm}\PP \hspace{-1mm} \left( \Delta^{(r)}(n) = j , \Delta^{(r)}_k(n)=j'\right)\\
    &=r \varphi^3\int_{\XX} \Big\vert f(\Delta^{(r)}(x))-f(\Delta^{(r)}_k(x))\Big\vert\mathrm{d}\PP(x)=r \varphi^3 A_3.
\end{align*}
It follows that
\begin{align*}
    \left\vert\frac{1}{N}\sum_{n<N} f(\Delta^{(r)}(n))-\int_{\XX} f(\Delta^{(r)}(x)) \mathrm{d}\PP(x)\right\vert 
        &\le A_2 + (1+r \varphi^3)A_3.
\end{align*}
We want to apply a Dominated Convergence Theorem to deal with $A_3$ (observe that we have the convergence $f(\Delta^{(r)}_k(x))\tend{k}{\infty}{}{f(\Delta^{(r)}(x))}$ $\PP$-almost-surely). For this, we need to find a good dominant function. As in \cite{TREJYH}, we define $g_i:=\sup_{k\ge 2}|\Delta^{(1)}_k\circ T^i(x)|$ for $i=0,\cdots,r-1$ and by \eqref{hyp_prop_moment2 Zeck} we get the inequalities
\[\left\lvert f\circ\Delta^{(r)}_k(x)\right\rvert\le C \sum_{j_0+\cdots+j_{r-1}=\alpha}\sum_{i=0}^{r-1}\frac{\binom{\alpha}{j_0,\cdots,j_{r-1}}}{r}g_i(x)^{rj_i}+\Big\lvert f(0)\Big\rvert\]
and
\[\left\lvert f\circ\Delta^{(r)}(x)\right\rvert\le C \sum_{j_0+\cdots+j_{r-1}=\alpha}\sum_{i=0}^{r-1}\frac{\binom{\alpha}{j_0,\cdots,j_{r-1}}}{r}g_i(x)^{rj_i}+\Big\lvert f(0)\Big\rvert.\]
We need to prove that $g_i^{rj_i}$ is integrable for the measure $\PP$. It is equivalent to show that $\sum_m\PP(\{x\in\XX : g_i^{rj_i}(x)>m\})$ is a convergent series. We have 
\begin{align*}
    g_i(x)> m^{\frac{1}{rj_i}} &\Leftrightarrow \sup_{k\ge 2}|\Delta^{(1)}_k\circ T^i(x)|> m^{\frac{1}{rj_i}}\\&\Leftrightarrow \exists k\ge 2, \hspace{2mm} \lvert\Delta_k^{(1)}(T^ix) \rvert > m^{\frac{1}{rj_i}}. \\
\end{align*}
From Lemma~\ref{On Delta1 Zeck}
\begin{align*}
    \exists k\ge 2, \hspace{2mm} \lvert\Delta_k^{(1)}(T^ix) \rvert > m^{\frac{1}{rj_i}} &\Leftrightarrow T^ix\in C_{00(10)^{\lfloor m^{\frac{1}{rj_i}}\rfloor}}\cup C_{0(01)^{\lfloor m^{\frac{1}{rj_i}}\rfloor}}
\end{align*}
It follows, by $T$-invariance
\[\PP(\{x\in\XX : g_i^{rj_i}(x)>m\})\le \left(\frac{1}{\varphi}\right)^{2+2\lfloor m^{\frac{1}{rj_i}}\rfloor}+\left(\frac{1}{\varphi}\right)^{1+2\lfloor m^{\frac{1}{rj_i}}\rfloor}=\left(\frac{1}{\varphi}\right)^{2\lfloor m^{\frac{1}{rj_i}}\rfloor}.\]
The quantity on the RHS is the general term of a convergent series which shows that $g_i^{rj_i}$ is integrable for the measure $\PP$. The dominated theorem can be applied and, for $k$ large enough, $(1+r \varphi^3)A_3\le \frac{\varepsilon}{2}$ for every $N\ge 1$. Now, once we have fixed such a $k$, for $N$ large enough, $A_2$ is bounded by $\frac{\varepsilon}{2}$ because of \eqref{uniq_ergo+birk Zeck} and the continuity of $\Delta_k^{(r)}$ and $f$. The convergence in the statement is thus proved.
 
 Note that the argument of the dominated convergence theorem also proves that $f\circ\Delta^{(r)}\in L^1(\PP)$ and $\int_{\XX} f\circ \Delta^{(r)}\mathrm{d}\PP=\lim\limits_{k\rightarrow \infty}\int_{\XX} f\circ\Delta^{(r)}_k\mathrm{d}\PP$.
\end{proof}

More generally, we have the following convergence.
\begin{prop}\label{prop_gene Zeck}
Let $r\ge 1$ and $f:\ZZ\rightarrow\CC$ be such that $f\circ\Delta^{(r)}\in L^1(\PP)$. Then 
    \[\lim\limits_{N\to\infty}\frac{1}{N}\sum_{n<N}f(\Delta^{(r)}(n))=\int_{\XX} f(\Delta^{(r)}(x)) \mathrm{d}\PP(x).\]
\end{prop}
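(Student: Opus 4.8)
The plan is to reduce the statement to the polynomial-growth case already settled in Proposition~\ref{prop_moment Zeck} by a truncation argument; the only new ingredient needed is the one-sided comparison of Lemma~\ref{lemme_technique Zeck}, which controls the tail of $f$ uniformly in $N$. For $M\ge 1$ I would set $f_M(d):=f(d)\,\mathbbm{1}_{\{|d|\le M\}}$ and decompose $f=f_M+(f-f_M)$. Since $f_M$ is bounded (it has finite support), it trivially satisfies the growth hypothesis \eqref{hyp_prop_moment2 Zeck}, so Proposition~\ref{prop_moment Zeck} applies to $f_M$ and yields
\[\frac{1}{N}\sum_{n<N}f_M(\Delta^{(r)}(n))\tend{N}{+\infty}{}{\int_{\XX}f_M(\Delta^{(r)}(x))\,\mathrm{d}\PP(x)}.\]
Note also that $f_M\circ\Delta^{(r)}$ is bounded and $f\circ\Delta^{(r)}\in L^1(\PP)$ by hypothesis, so $(f-f_M)\circ\Delta^{(r)}\in L^1(\PP)$ as well, and all the integrals below are finite.

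The key step is to estimate the remainder $f-f_M$ simultaneously on the empirical average and on the integral. Writing $|f-f_M|=\sum_{|d|>M}|f(d)|\,\mathbbm{1}_{\{\cdot=d\}}$ and applying inequality \eqref{ineg_techni Zeck} of Lemma~\ref{lemme_technique Zeck} to each value $d$ with $|d|>M$ (the sum over $d$ being effectively finite for each fixed $N$), I obtain for every $N\ge 1$
\[\frac{1}{N}\sum_{n<N}\bigl|(f-f_M)(\Delta^{(r)}(n))\bigr|\le r\varphi^3\int_{\XX}\bigl|(f-f_M)(\Delta^{(r)}(x))\bigr|\,\mathrm{d}\PP(x).\]
Because $f\circ\Delta^{(r)}\in L^1(\PP)$, the right-hand integral is the tail $\int_{\XX}|f\circ\Delta^{(r)}|\,\mathbbm{1}_{\{|\Delta^{(r)}|>M\}}\,\mathrm{d}\PP$ of an integrable function and therefore tends to $0$ as $M\to\infty$ by dominated convergence; a fortiori the integral term alone does too. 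This is exactly where taking absolute values before summing over $d$ matters: the bound of Lemma~\ref{lemme_technique Zeck} is one-sided, so it must be applied to $|f-f_M|$ rather than to $f-f_M$ itself.

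With these ingredients the proof closes by a standard $\varepsilon/3$ decomposition. Given $\varepsilon>0$, I would first fix $M$ so large that both $r\varphi^3\int_{\XX}|(f-f_M)\circ\Delta^{(r)}|\,\mathrm{d}\PP<\varepsilon/3$ and $\int_{\XX}|(f-f_M)\circ\Delta^{(r)}|\,\mathrm{d}\PP<\varepsilon/3$, which is possible by the previous paragraph. For this fixed $M$, the convergence displayed above makes the $f_M$-discrepancy smaller than $\varepsilon/3$ for all $N$ large enough. Combining the three pieces through the triangle inequality
\[\left|\frac{1}{N}\sum_{n<N}f(\Delta^{(r)}(n))-\int_{\XX}f(\Delta^{(r)}(x))\,\mathrm{d}\PP(x)\right|\le B_1+B_2+B_3,\]
where $B_1$ is the empirical tail, $B_2$ the $f_M$-discrepancy and $B_3$ the integral tail, then gives a bound strictly less than $\varepsilon$ for all large $N$, which is the claimed convergence. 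I do not anticipate a genuine obstacle here: the substantive work is already contained in Lemma~\ref{lemme_technique Zeck} and Proposition~\ref{prop_moment Zeck}, and the present argument merely packages the truncation together with the uniform-in-$N$ tail bound they supply.
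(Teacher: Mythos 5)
Your proof is correct, and it follows essentially the same route as the paper: the paper simply defers to Proposition~2.4 of \cite{TREJYH}, whose argument is precisely this truncation of $f$ at level $M$, with the uniform-in-$N$ empirical tail bound supplied by the one-sided inequality \eqref{ineg_techni Zeck} of Lemma~\ref{lemme_technique Zeck} and the bounded part handled by Proposition~\ref{prop_moment Zeck}. Your attention to applying the lemma to $\lvert f-f_M\rvert$ value by value (where nonnegativity makes the one-sided bound summable) is exactly the point that makes the argument work.
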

The proof is exactly the same as Proposition 2.4 in \cite{TREJYH}. We have shown that the random variable $\Delta^{(r)}$ satisfies some good properties such as the finiteness of moments of any order of $\mu^{(r)}$, the law of $\Delta^{(r)}$. The next section focuses on another natural question : how to compute the law $\mu^{(r)}$?

\section{How to compute \texorpdfstring{$\mu^{(r)}$}{yr}}\label{HTCM}

In this section, we present an algorithm that computes exactly the measure $\mu^{(r)}$ for any $r$ in $\NN$ and the consequences of it. 

    \subsection{Description of the algorithm}\label{DOTA}
    
This algorithm is different from the classical way to compute $\mu^{(r)}$ in integer base (see \cite{JB}, page 14 or \cite{TREJYH} Proposition 3.1 for instance). In integer base $b\ge 2$, the computation of $\mu^{(r)}$ relies on inductive relations on the expansion of $r$ which are easy to prove in that case. For instance, one of the relation states that if $r$ is a multiple of $b$ then the variation when adding $r$ to $x$ is the same as the variation when adding $\frac{r}{b}$ to $\sigma x$. So the laws of the variation when adding $r$ or $\frac{r}{b}$ are the same. In the Zeckendorf representation, this trivial argument is false because the carries can modify the digits on both sides (unlike the integer base case where only the left side can be affected). However, we find an algorithm to compute $\mu^{(r)}$, in this Zeckendorf system, that relies on Rokhlin towers. We precise that this algorithm can be adapted in integer base.

First, let $r\ge 1$ ($r=0$ is irrelevant) and let $\ell\ge 2$ be the unique integer such that $F_{\ell}\le r< F_{\ell+1}$. At a given order $k\ge 1$, we define $\CST{k}$ as the union of the levels of the large and small towers of order $k$, except the $r$ highest levels. We observe $\CST{1}=\emptyset$. On each level of $\CST{k}$, $\Delta^{(r)}$ is constant (see Proof of Lemma \ref{lemme_technique Zeck}). But, due to the Cut-and-Stack process, the value of $\Delta^{(r)}$ on these ``constant'' levels of $\CST{k}$ may be deduced from those taken in $\CST{k-1}$. That is why we also introduce, for $k\ge 2$, the \emph{New information zone of the Rokhlin tower of order $k$} as the set 
\[\NIZ{k}:=\CST{k}\backslash{\CST{k-1}}.\]

Of course, everything depends on $r$ but we do not emphasise on that in the notations of $\ell$, $\NIZ{k}$ and $\CST{k}$ for simplicity. Let us define clearly this set $\NIZ{k}$.
\begin{prop}\label{Def NIZ_k}
    In a large tower, we enumerate, starting by $1$, the levels from the base of the tower to the top of it. We have the following description of $\NIZ{k}$.
    \begin{itemize}
        \item If $2\le k\le \ell-1$, $\NIZ{k}=\emptyset$.
        \item $\NIZ{\ell}$ is the union of the $F_{\ell+1}-r$ first levels of the large tower of order $\ell$.
        \item $\NIZ{\ell+1}$ is the union of the $F_{\ell}$ levels between the $F_{\ell+1}-r+1^{th}$ and the $F_{\ell+2}-r^{th}$ levels of the large tower of order $\ell+1$.
        \item If $k>\ell+1$, $\NIZ{k}$ is the union of the $r$ levels between the $F_{k}-r+1^{th}$ and the $F_k^{th}$ levels of the large tower of order $k$.
    \end{itemize}
\end{prop}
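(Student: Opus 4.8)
The plan is to track, level by level, how the two Rokhlin towers of order $k-1$ sit inside the two towers of order $k$ under the Cut and Stack process of Subsection~\ref{RTEM}, and then simply to subtract. Throughout I recall that the large (resp. small) tower of order $k$ has height $F_{k+1}$ (resp. $F_k$) and that, by the analysis carried out in the proof of Lemma~\ref{lemme_technique Zeck}, $\CST{k}$ is exactly the union of the bottom $F_{k+1}-r$ levels of the large tower and the bottom $F_k-r$ levels of the small tower of order $k$ (a tower contributing nothing when the corresponding count is $\le 0$, which is why I read ``except the $r$ highest levels'' as the top $r$ of \emph{each} tower). The only facts I need from the Cut and Stack process are the set identities
\begin{itemize}
    \item large-tower-order-$(k-1)$ level $j$ $=$ (large-tower-order-$k$ level $j$) $\sqcup$ (small-tower-order-$k$ level $j$), for $1\le j\le F_k$;
    \item small-tower-order-$(k-1)$ level $j$ $=$ large-tower-order-$k$ level $F_k+j$, for $1\le j\le F_{k-1}$.
\end{itemize}

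First I would dispose of the degenerate range $2\le k\le \ell-1$: there $F_{k+1}\le F_{\ell}\le r$, so neither tower of order $k$ contributes any level, $\CST{k}=\emptyset$, and since $\NIZ{k}\subseteq\CST{k}$ it is empty. For $k=\ell$ the small tower still contributes nothing (as $F_{\ell}-r\le 0$) while the large tower contributes its bottom $F_{\ell+1}-r$ levels; together with $\CST{\ell-1}=\emptyset$ this yields $\NIZ{\ell}=\CST{\ell}$, the second bullet of the statement.

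The heart of the argument is the comparison across one order. For $k=\ell+1$ the set $\CST{\ell}$ lives only in the large tower of order $\ell$ (levels $1,\dots,F_{\ell+1}-r$); pushing it through the first identity it becomes the large-order-$(\ell+1)$ levels $1,\dots,F_{\ell+1}-r$ \emph{and} the small-order-$(\ell+1)$ levels $1,\dots,F_{\ell+1}-r$. Comparing with $\CST{\ell+1}$ (large levels $1,\dots,F_{\ell+2}-r$, small levels $1,\dots,F_{\ell+1}-r$) leaves precisely the large-order-$(\ell+1)$ levels $F_{\ell+1}-r+1,\dots,F_{\ell+2}-r$, which number $F_{\ell}$. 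For $k>\ell+1$ both towers of order $k-1$ are nonempty, so using both identities $\CST{k-1}$ embeds as the large-order-$k$ levels $\{1,\dots,F_k-r\}\cup\{F_k+1,\dots,F_{k+1}-r\}$ together with the small-order-$k$ levels $\{1,\dots,F_k-r\}$ (here $F_k+F_{k-1}=F_{k+1}$ pins down the top index). Subtracting this from $\CST{k}$ deletes the middle gap and leaves exactly the large-order-$k$ levels $F_k-r+1,\dots,F_k$, namely $r$ levels; along the way one checks $\CST{k-1}\subseteq\CST{k}$, so the set difference is genuine.

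The only real work is bookkeeping: aligning the level-index ranges across the Cut and Stack and confirming the endpoints with the Fibonacci recurrence. The one genuine subtlety, which I would flag explicitly, is the boundary behaviour at $k=\ell$ and $k=\ell+1$, where one of the coarser towers is empty and hence produces no ``top part'' contribution to $\CST{k-1}$; this is precisely the reason why $\NIZ{\ell}$ and $\NIZ{\ell+1}$ are larger than the generic $r$-level zone. I expect no conceptual obstacle beyond keeping these index ranges straight.
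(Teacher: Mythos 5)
Your proof is correct: the paper states Proposition~\ref{Def NIZ_k} without any proof, treating it as an immediate consequence of the Cut and Stack process, and your level-by-level bookkeeping --- the two set identities relating levels of order $k-1$ to levels of order $k$, the reading of $\CST{k}$ as the bottom $F_{k+1}-r$ (resp.\ $F_k-r$) levels of the large (resp.\ small) tower, which is indeed the reading used in the proof of Lemma~\ref{lemme_technique Zeck}, and the boundary cases $k=\ell$ and $k=\ell+1$ where the coarser small tower contributes nothing --- is exactly the intended implicit argument. Your subtraction also verifies $\CST{k-1}\subseteq\CST{k}$ and pins down the generic zone as the $r$ levels from the $F_k-r+1^{\textrm{th}}$ to the $F_k^{\textrm{th}}$, in full agreement with the statement.
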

\begin{remarque}
    By definition, for every $k\ge 1$, the set $\NIZ{k}$ is either empty or a disjoint union of cylinders of order $k$ (exactly $F_{\ell+1}-r$ if $k=\ell$, $F_{\ell}$ if $k=\ell+1$ or $r$ if $k\ge\ell+2$). Moreover, we observe that $(\NIZ{k})_{k\ge \ell}$ is a partition of $\XX\backslash{\{(01)^{\infty}, (10)^{\infty}\}}$.
\end{remarque}

\begin{exemple}\label{Example NIZ pour r=4}Here, an example with $r=4=\overline{101}$.
\begin{figure}[H]
    \centering
    \includegraphics[scale=0.34]{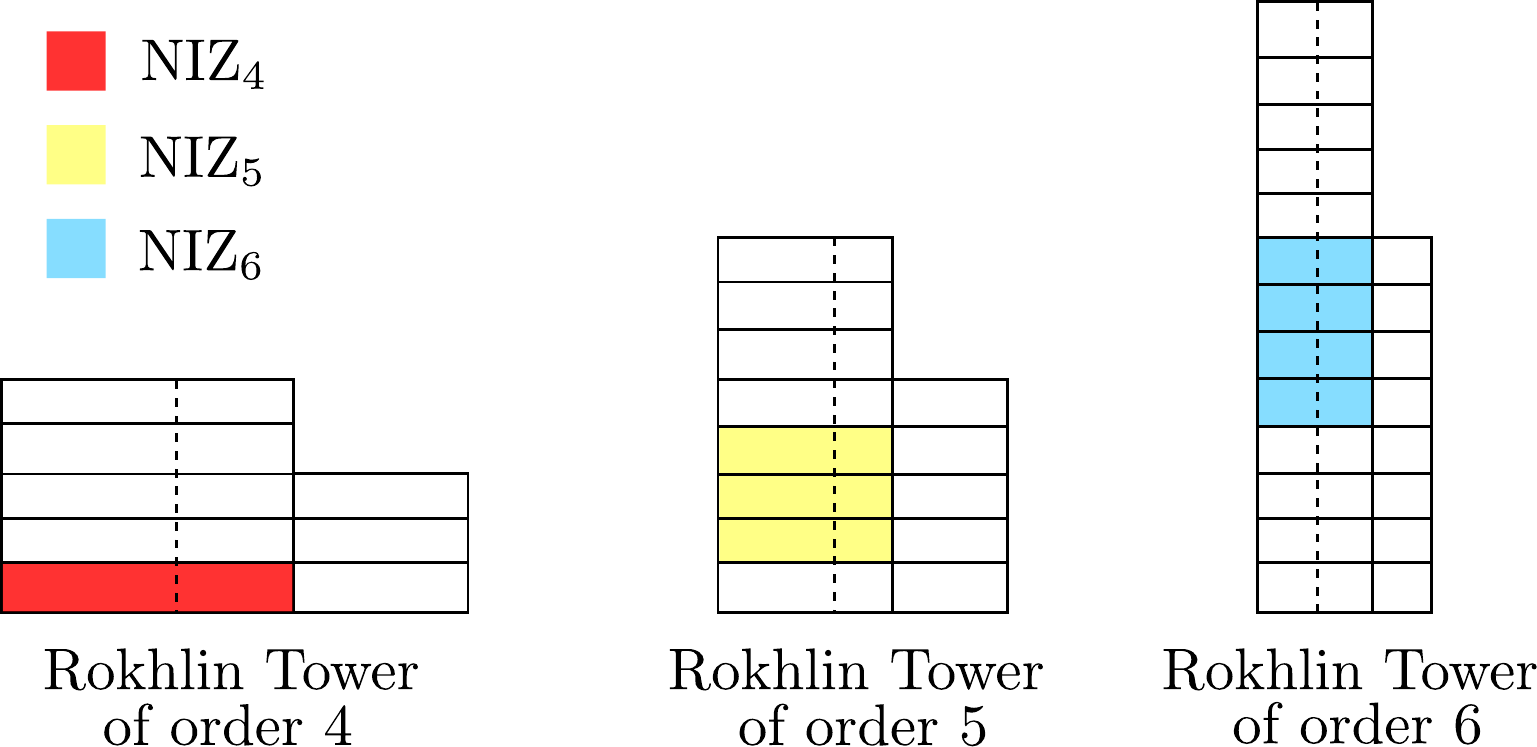}
    \caption{Visualization of $(\NIZ{k})_{k\ge 1}$ when $r=4=\overline{101}$.}
\end{figure}
We also provide a more general figure to understand.
\begin{figure}[H]
    \centering
    \includegraphics[scale=0.35]{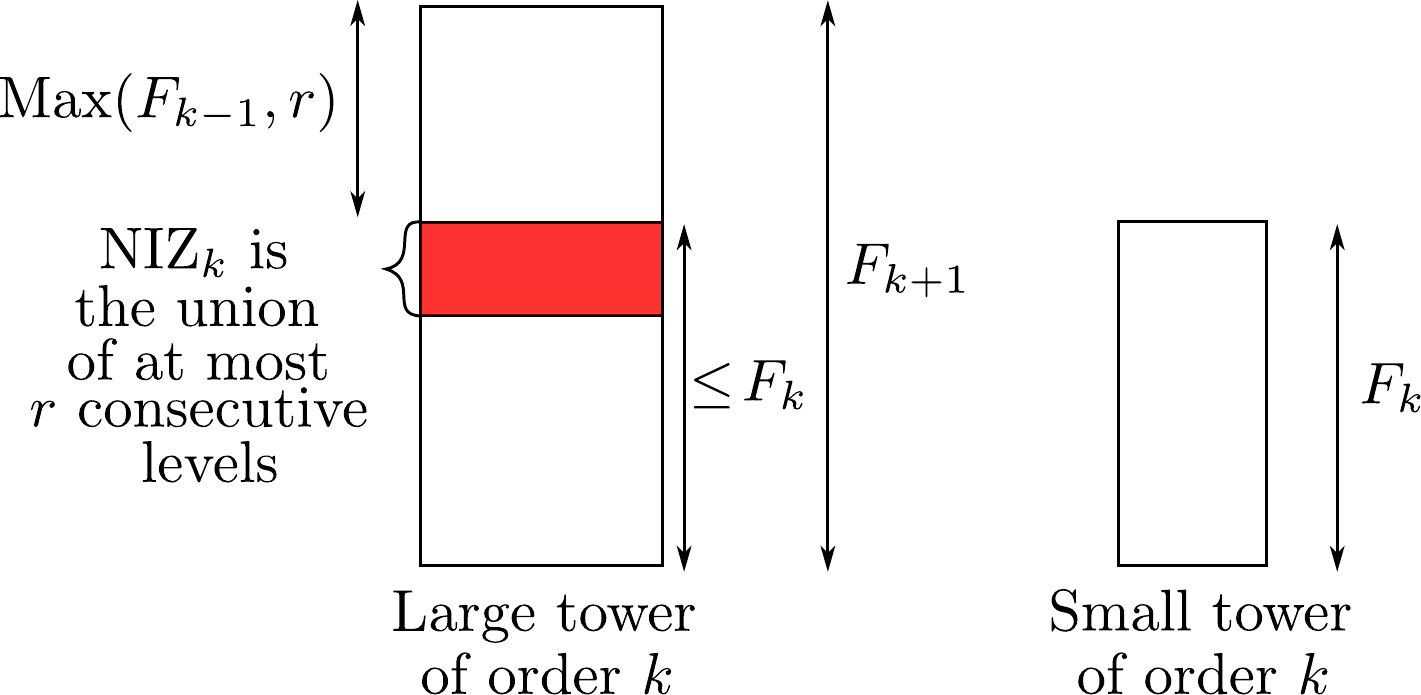}
    \caption{Visualisation of $\NIZ{k}$, $k\ge \ell+2$.}
    \label{NIZ general}
\end{figure}
\end{exemple}
Through the following lemmas, we state some observations on this sequence $(\NIZ{k})_{k\ge 1}$. We start by a description of the cylinders of order $k$ that appear in $\NIZ{k}$ for $k\ge \ell+2$.
\begin{lemme}\label{Contenu NIZ_k}
    If $k\ge 2$, the set $\NIZ{k}$ is composed of cylinders of order $k$ whose name has a pattern $00$ at the leftmost positions.
\end{lemme}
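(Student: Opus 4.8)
The plan is to identify each level of a large tower with the integer it contains and then to check that the levels making up $\NIZ{k}$ can only contain integers $n<F_k$, which forces the two leftmost digits of their order-$k$ names to be $0$.

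First I would record how the large tower of order $k$ is indexed. Its levels are exactly the cylinders of order $k$ whose name $x_{k+1}\cdots x_2$ begins with a $0$, so every level of $\NIZ{k}$ automatically has $x_{k+1}=0$; it therefore suffices to prove that the next digit $x_k$ also vanishes. Since the integer $0=\overline{0^{\infty}}$ sits in the base level and $T$ acts as $n\mapsto n+1$, the level numbered $j$ from the base (as in Proposition~\ref{Def NIZ_k}) is precisely the cylinder containing $T^{j-1}0=j-1$; equivalently, its name is the length-$k$ Zeckendorf word of $j-1$.

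Next I would read off from Proposition~\ref{Def NIZ_k} the integers contained in the levels of $\NIZ{k}$. For $2\le k\le \ell-1$ the set is empty and there is nothing to prove. For $k=\ell$ the levels contain $0,\dots,F_{\ell+1}-r-1$; for $k=\ell+1$ they contain $F_{\ell+1}-r,\dots,F_{\ell+2}-r-1$; and for $k\ge \ell+2$ they contain $F_k-r,\dots,F_k-1$. The uniform claim I want is that in every case such an integer $n$ satisfies $n<F_k$. This is immediate when $k\ge\ell+2$, since then $n\le F_k-1$. For the two boundary orders it follows from the defining inequality $F_\ell\le r<F_{\ell+1}$ together with $F_{\ell+1}-F_\ell=F_{\ell-1}$ and $F_{\ell+2}-F_{\ell+1}=F_\ell$: indeed $F_{\ell+1}-r-1\le F_{\ell-1}-1<F_\ell$ settles $k=\ell$, while $F_{\ell+2}-r-1<F_{\ell+1}$ settles $k=\ell+1$.

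Finally I would conclude using Zeckendorf's theorem: if $n<F_k$ then $n_k=n_{k+1}=0$, because a single nonzero digit at position $k$ or $k+1$ would already force $n\ge F_k$. Hence the order-$k$ name $x_{k+1}x_k\cdots x_2$ of the cylinder containing $n$ begins with the pattern $00$, which is exactly the assertion. I do not expect a genuine obstacle here; the only delicate point is the bookkeeping of the level ranges in the boundary orders $k=\ell$ and $k=\ell+1$, where the hypothesis $r\ge F_\ell$ is precisely what pushes the top of the range below $F_k$.
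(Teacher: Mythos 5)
Your proof is correct and takes essentially the same route as the paper's: the paper observes that a cylinder of $\NIZ{k}$ lies in the large tower of order $k$ (hence $x_{k+1}=0$) and is not among the $F_{k-1}$ highest levels of that tower (hence $x_k=0$), which is exactly your integer-counting formulation that the unique integer $n<F_{k+2}$ contained in the level satisfies $n<F_k$, forcing $n_k=n_{k+1}=0$ by Zeckendorf uniqueness. If anything, your write-up is slightly more complete, since you check the boundary orders $k=\ell$ and $k=\ell+1$ explicitly using $F_{\ell}\le r<F_{\ell+1}$, whereas the paper's proof only writes out the case $k\ge\ell+2$.
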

\begin{proof}
    Let $k\ge \ell+2$, consider a cylinder $C_{n_{k+1}\cdots n_2}$ of order $k$ that belongs in $\NIZ{k}$ and where $(n_{k+1},\cdots,n_2)\in\XX_f$. Since this cylinder is part of the large Rokhlin tower of order $k$, $n_{k+1}=0$. Moreover, this cylinder is not in the $F_{k-1}$ highest cylinders of the same tower so $n_k=0$ too.
\end{proof}
Now, we show a relation between cylinders of order $k$ in $\NIZ{k}$ and those of order $k+2$ in $\NIZ{k+2}$ for $k\ge \ell$.
\begin{lemme}\label{relation cylindres NIZ}
    Let $k\ge 2$ and let $(n_{k-1},\cdots,n_2)\in\XX_f$. We have the implication
    \begin{align}
        C_{00n_{k-1}\cdots n_2}\subset \NIZ{k} \implies C_{0010n_{k-1}\cdots n_2}\subset \NIZ{k+2} 
    \end{align}
\end{lemme}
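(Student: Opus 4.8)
The plan is to convert both membership statements into inequalities on the \emph{position} that a cylinder occupies in the large Rokhlin tower it belongs to, and then to compare these inequalities with the help of the Fibonacci recurrence.

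The key preliminary observation I would record is that, in the large tower of order $k$, the level sitting at position $j$ (counted from the base, which is position $1$) is precisely the cylinder encoding the integer $j-1$. This holds because $T$ acts as $n\mapsto n+1$ and moves each level to the one immediately above it, while the base is the all-zero cylinder $C_{0\cdots 0}$, which encodes $0$. Granting this, the cylinder $C_{00n_{k-1}\cdots n_2}$ (whose two leftmost fixed digits are $x_{k+1}=x_k=0$) occupies position $N+1$ in the large tower of order $k$, where $N:=\overline{n_{k-1}\cdots n_2}$; and the cylinder $C_{0010n_{k-1}\cdots n_2}$, which carries a single $1$ at position $k+1$ above the same word, encodes $F_{k+1}+N$ and hence occupies position $F_{k+1}+N+1$ in the large tower of order $k+2$.

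Next I would feed these positions into the description of the $\NIZ{\cdot}$ sets given by Proposition~\ref{Def NIZ_k}. In the principal regime $k\ge\ell+2$, the set $\NIZ{k}$ consists of the levels at positions $F_k-r+1,\dots,F_k$ of its large tower and $\NIZ{k+2}$ of those at positions $F_{k+2}-r+1,\dots,F_{k+2}$. Hence $C_{00n_{k-1}\cdots n_2}\subset\NIZ{k}$ translates to $F_k-r\le N\le F_k-1$, while $C_{0010n_{k-1}\cdots n_2}\subset\NIZ{k+2}$ translates to $F_{k+2}-F_{k+1}-r\le N\le F_{k+2}-F_{k+1}-1$. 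Since $F_{k+2}-F_{k+1}=F_k$, the two ranges coincide and the implication (in fact an equivalence) follows at once. For $2\le k\le\ell-1$ the set $\NIZ{k}$ is empty, so the hypothesis never holds and the implication is vacuous.

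The delicate part, and the step I expect to require the most care, is the two boundary orders $k=\ell$ and $k=\ell+1$, for which $\NIZ{\ell}$ and $\NIZ{\ell+1}$ have the exceptional descriptions of Proposition~\ref{Def NIZ_k} (first $F_{\ell+1}-r$ levels, respectively positions $F_{\ell+1}-r+1,\dots,F_{\ell+2}-r$). There the same position computation yields a hypothesis range that is no longer equal to, but only contained in, the range forced by the conclusion; the containment must be checked by hand, and it rests exactly on the defining inequality $F_\ell\le r$ of $\ell$ (together with $F_{\ell+1}-F_\ell=F_{\ell-1}$ and $F_{\ell+2}-F_\ell=F_{\ell+1}$). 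Once these two cases are dispatched, the implication holds for all $k\ge 2$.
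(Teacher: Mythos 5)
Your proof is correct and takes essentially the same route as the paper: there too, membership in $\NIZ{k}$ is translated into an integer-interval condition on the integer $n=\overline{n_{k-1}\cdots n_2}$ contained in the cylinder, the new cylinder is identified as the level containing $F_{k+1}+n$, and the argument splits into the generic regime $k\ge \ell+2$ (where the two intervals coincide) and the boundary orders $k=\ell$ and $k=\ell+1$, settled by the inequalities coming from $F_{\ell}\le r<F_{\ell+1}$ (the paper's chain \eqref{etape relation cylindres NIZ}); your tower-position bookkeeping even absorbs the paper's separate verification for $r=1$. The two boundary containments you leave ``by hand'' do go through with your stated ingredients, up to one slip of notation: for $k=\ell$ one needs $r\ge F_{\ell}$ and $r\ge F_{\ell-1}$ via $F_{\ell+1}-F_{\ell}=F_{\ell-1}$, while for $k=\ell+1$ the relevant identity is $F_{\ell+2}-F_{\ell+1}=F_{\ell}$ rather than $F_{\ell+2}-F_{\ell}=F_{\ell+1}$.
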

\begin{remarque}
If $k=2$, we agree that the word $n_{k-1}\cdots n_2$ is the empty word.
\end{remarque}
\begin{proof}
    For $r=1$, we let the reader check that $C_{00}\subset \NIZ{2}$ and $C_{0010}\subset \NIZ{4}$. For $r\ge 2$, we have $\NIZ{2}=\emptyset$. So, we can assume $k\ge 3$. Let $C_{00n_{k-1}\cdots n_2}\subset \NIZ{k}$ and $n:=\sum_{i=2}^{k-1} n_iF_i$. Then, $n\in \NIZ{k}\cap [0,F_k[$ where the interval is an integer interval and considered as being part of $\XX$.
    \begin{enumerate}
        \item If $k\ge \ell+2$, we observe that $\NIZ{k}\cap [0,F_k[=[F_k-r,F_k-1[$. It follows that $F_{k+1}+n$, whose Zeckendorf expansion is $\overline{10n_{k-1}\cdots n_2}$, belongs to $[F_{k+2}-r,F_{k+2}-1]\subset \NIZ{k+2}$. Furthermore $F_{k+1}+n\in C_{0010n_{k-1}\cdots n_2}$ which is a level of the large tower of order $k+2$. So  $C_{0010n_{k-1}\cdots n_2}\subset \NIZ{k+2}$.
        \item For $k=\ell$, we observe that $\NIZ{\ell}\cap [0,F_{\ell}[=[0,F_{\ell+1}-r-1[$. It follows that $n+F_{\ell+1}\in[F_{\ell+1},2F_{\ell+1}-r-1]$. We claim that
        \begin{align}\label{etape relation cylindres NIZ} 
            F_{\ell+2}-r \le F_{\ell+1}\le 2F_{\ell+1}-r-1\le  F_{\ell+2}-1.
        \end{align}
        Indeed, since $F_{\ell}\le r < F_{\ell+1}$;
        \begin{itemize}
            \item  $F_{\ell+2}-r\le F_{\ell+2}-F_{\ell}=F_{\ell+1}$,
            \item  $F_{\ell+1}-r-1\ge 0$ and we deduce $2F_{\ell+1}-r-1\ge F_{\ell+1}$,
            \item $2F_{\ell+1}-r-1\le F_{\ell+1}+F_{\ell-1}-1\le F_{\ell+2}-1$.
        \end{itemize}
        From \eqref{etape relation cylindres NIZ}, we get $n+F_{\ell+1}\in [F_{\ell+2}-r,F_{\ell+2}-1]$. Since $[F_{\ell+2}-r,F_{\ell+2}-1]\subset \NIZ{\ell+2}$, we conclude as in the first point.
        \item If $k=\ell+1$, we get $\NIZ{k}\cap [0,F_{\ell}[=[F_{\ell+1}-r,F_{\ell+2}-r-1]$. It follows that $n+F_{\ell+2}\in[F_{\ell+3}-r,2F_{\ell+2}-r-1]$. Since $r\ge F_{\ell}$, $2F_{\ell+2}-r-1\le F_{\ell+3}-1$ and so $n+F_{\ell+2}\in[F_{\ell+3}-r,F_{\ell+3}-1]\subset \NIZ{\ell+2}$. We conclude as in the first point.
    \end{enumerate}
\end{proof}
\begin{lemme}\label{relation par Delta cylindres NIZ}
    Let $k\ge\ell$ and a collection $(n_{k-1},\cdots, n_2)\in\XX_f$ such that $C_{00n_{k-1}\cdots n_2}$ is a cylinder of $\NIZ{k}$. Let $d\in\ZZ$ such that 
    \[\Delta^{(r)}_{\lvert C_{00n_{k-1}\cdots n_2}}=d\]
    then
    \[\Delta^{(r)}_{\lvert C_{0010n_{k-1}\cdots n_2}}=d-1.\]
\end{lemme}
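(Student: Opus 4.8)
The plan is to exploit the fact that $\Delta^{(r)}$ is constant on every cylinder contained in one of the sets $\NIZ{\cdot}$, so that it suffices to evaluate it on a single convenient integer representative of each of the two cylinders. By Lemma~\ref{relation cylindres NIZ} we already know that $C_{0010n_{k-1}\cdots n_2}\subset\NIZ{k+2}$, hence $\Delta^{(r)}$ is indeed constant there, and the statement reduces to comparing $\Delta^{(r)}$ at two explicit integers.

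Concretely, I would take $x:=\sum_{i=2}^{k-1}n_iF_i$, the unique integer of $C_{00n_{k-1}\cdots n_2}$ whose digits of index $\ge k$ all vanish, so that $y:=x+F_{k+1}$ is the corresponding integer representative of $C_{0010n_{k-1}\cdots n_2}$. Since $x<F_k$ has vanishing digits at positions $k,k+1,k+2$, adding $F_{k+1}$ creates no carry and $s(y)=s(x)+1$. Writing $m:=x+r$, we have $y+r=m+F_{k+1}$, and the whole computation hinges on the single equality $s(m+F_{k+1})=s(m)$: granting it, $\Delta^{(r)}(y)=s(m+F_{k+1})-s(y)=s(m)-s(x)-1=\Delta^{(r)}(x)-1=d-1$, which is the claim.

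The key step is therefore to locate $m=x+r$, and I claim $m\in[F_k,F_{k+1})$. I would read off the ranges of the integer representatives of the cylinders of $\NIZ{k}$ from the computations already carried out in the proof of Lemma~\ref{relation cylindres NIZ} (equivalently from Proposition~\ref{Def NIZ_k}), treating the three regimes $k=\ell$, $k=\ell+1$ and $k\ge\ell+2$ separately: in each case the representatives fill an explicit integer interval, and adding $r$ lands exactly in $[F_k,F_{k+1})$, using $r\ge F_\ell$ for the lower bound and $r<F_{\ell+1}$ (hence $r\le F_{k-1}$ when $k\ge\ell+2$) for the upper bound. This three-case split, forced by the fact that $\NIZ{k}$ is described differently at the bottom ($k=\ell,\ell+1$) than higher up, is the main piece of bookkeeping.

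Finally, once $m\in[F_k,F_{k+1})$ is established the digit-sum computation is uniform and elementary: such an $m$ satisfies $m_k=1$, $m_{k-1}=0$ and $m_{k+1}=m_{k+2}=m_{k+3}=0$, so adding $F_{k+1}$ falls precisely into case~\eqref{avancee2} of Proposition~\ref{T^{F_n}} with the parameter equal to $0$. This replaces the local pattern $\overline{0001}$ occupying positions $k+3,\dots,k$ by $\overline{0100}$ and leaves every digit of index $\le k-1$ untouched; in particular it trades the single $1$ at position $k$ for a single $1$ at position $k+2$, so $s(m+F_{k+1})=s(m)$, which completes the argument.
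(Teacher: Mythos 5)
Your proof is correct and takes essentially the same route as the paper: both evaluate $\Delta^{(r)}$ on the canonical integer representative $x$ of the cylinder, establish the key fact $x+r\in[F_k,F_{k+1})$ (i.e.\ $(x+r)_k=1$), and conclude from the two elementary carry computations $s\bigl((x+r)+F_{k+1}\bigr)=s(x+r)$ and $s(x+F_{k+1})=s(x)+1$. The only cosmetic differences are that the paper justifies $(x+r)_k=1$ geometrically via the Cut-and-Stack picture (the order-$(k-1)$ cylinder sits among the $r$ top levels of the large tower, so adding $r$ lands in the small tower) where you redo the three-regime interval bookkeeping already present in the proof of Lemma~\ref{relation cylindres NIZ}, and that the paper carries out the final addition as an inline correction $\overline{0011\cdots}\to\overline{0100\cdots}$ instead of citing case~\eqref{avancee2} of Proposition~\ref{T^{F_n}}.
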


These lemmas imply the following corollary that will give birth to our algorithm.
\begin{corollaire}\label{corollaire pour algo}
    Let $k\ge \ell+4$, the values taken by $\Delta^{(r)}$ on the cylinders of order $k+2$ that compose $\NIZ{k+2}$ are exactly the values taken on cylinders of order $k$ in $\NIZ{k}$ shifted by $-1$.
\end{corollaire}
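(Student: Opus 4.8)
The plan is to exhibit an explicit value-shifting bijection between the cylinders composing $\NIZ{k}$ and those composing $\NIZ{k+2}$, and then to read off the conclusion from Lemma~\ref{relation par Delta cylindres NIZ}.

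First I would fix the combinatorial framework. Since $k\ge\ell+4$, both $k$ and $k+2$ are $\ge\ell+2$, so by Proposition~\ref{Def NIZ_k} and the remark following it, $\NIZ{k}$ (resp.\ $\NIZ{k+2}$) is a disjoint union of exactly $r$ cylinders of order $k$ (resp.\ of order $k+2$). By Lemma~\ref{Contenu NIZ_k}, each cylinder composing $\NIZ{k}$ has a name beginning with $00$, hence is of the form $C_{00n_{k-1}\cdots n_2}$ with $(n_{k-1},\cdots,n_2)\in\XX_f$. I would then introduce the map
\[
\Phi : C_{00n_{k-1}\cdots n_2} \longmapsto C_{0010n_{k-1}\cdots n_2},
\]
which sends a cylinder of order $k$ to a cylinder of order $k+2$; it is well-defined because inserting $10$ after the leading $00$ produces the admissible name $0010n_{k-1}\cdots n_2$, creating no two consecutive $1$'s.

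Next I would check that $\Phi$ is a bijection from the cylinders composing $\NIZ{k}$ onto those composing $\NIZ{k+2}$. Injectivity is immediate, since distinct names give distinct cylinders. That $\Phi$ takes values in $\NIZ{k+2}$ is precisely the content of Lemma~\ref{relation cylindres NIZ}. Since both families consist of the same finite number $r$ of cylinders, an injection from one into the other is automatically onto, so $\Phi$ is a bijection. Finally, $\Delta^{(r)}$ is constant on each cylinder of $\NIZ{k}$, these being levels of a tower belonging to $\CST{k}$ on which $\Delta^{(r)}$ is constant (as observed in the proof of Lemma~\ref{lemme_technique Zeck}); so for each such cylinder there is a well-defined value $d$, and Lemma~\ref{relation par Delta cylindres NIZ} gives that $\Delta^{(r)}$ equals $d-1$ on the image $\Phi(C_{00n_{k-1}\cdots n_2})$. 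Combining this with the bijectivity of $\Phi$ shows that the values taken by $\Delta^{(r)}$ on the cylinders of $\NIZ{k+2}$ are exactly the values taken on the cylinders of $\NIZ{k}$, each shifted by $-1$, which is the claim.

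The hard part is genuinely only the bijectivity, and even that reduces to a counting argument once Proposition~\ref{Def NIZ_k} and Lemmas~\ref{Contenu NIZ_k} and \ref{relation cylindres NIZ} are in hand: the one thing that must be verified is that $\NIZ{k}$ and $\NIZ{k+2}$ contain the same number $r$ of cylinders, which is what forces the injection $\Phi$ to be surjective. The value shift by $-1$ then comes for free from Lemma~\ref{relation par Delta cylindres NIZ}, so no further computation is needed.
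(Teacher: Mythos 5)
Your proposal is correct and takes essentially the same approach as the paper: its (very terse) proof of Corollary~\ref{corollaire pour algo} likewise combines the count of $r$ cylinders at each order from Proposition~\ref{Def NIZ_k} with Lemmas~\ref{relation cylindres NIZ} and \ref{relation par Delta cylindres NIZ}, and your map $\Phi$ merely makes explicit the injection-plus-counting bijection the paper leaves implicit. There is nothing to add or correct.
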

\begin{proof}[Proof of Lemma~\ref{relation par Delta cylindres NIZ}] 
Let $n:=\sum_{i=2}^{k-1} n_iF_i$ such that $C_{00n_{k-1}\cdots n_2}\subset\NIZ{k}$. We have $C_{00n_{k-1}\cdots n_2}\subset C_{0n_{k-1}\cdots n_2}$ which is, due to the Cut and Stack process, one of the $r$ top levels of the large tower of order $k-1$. Also, due to the Cut and Stack process, $n+r$ must lies in the small tower of order $k-1$ (see Figure~\ref{main argument}) so $(n+r)_k$ must be $1$.
\begin{figure}[H]
    \centering
    \includegraphics[scale=0.30]{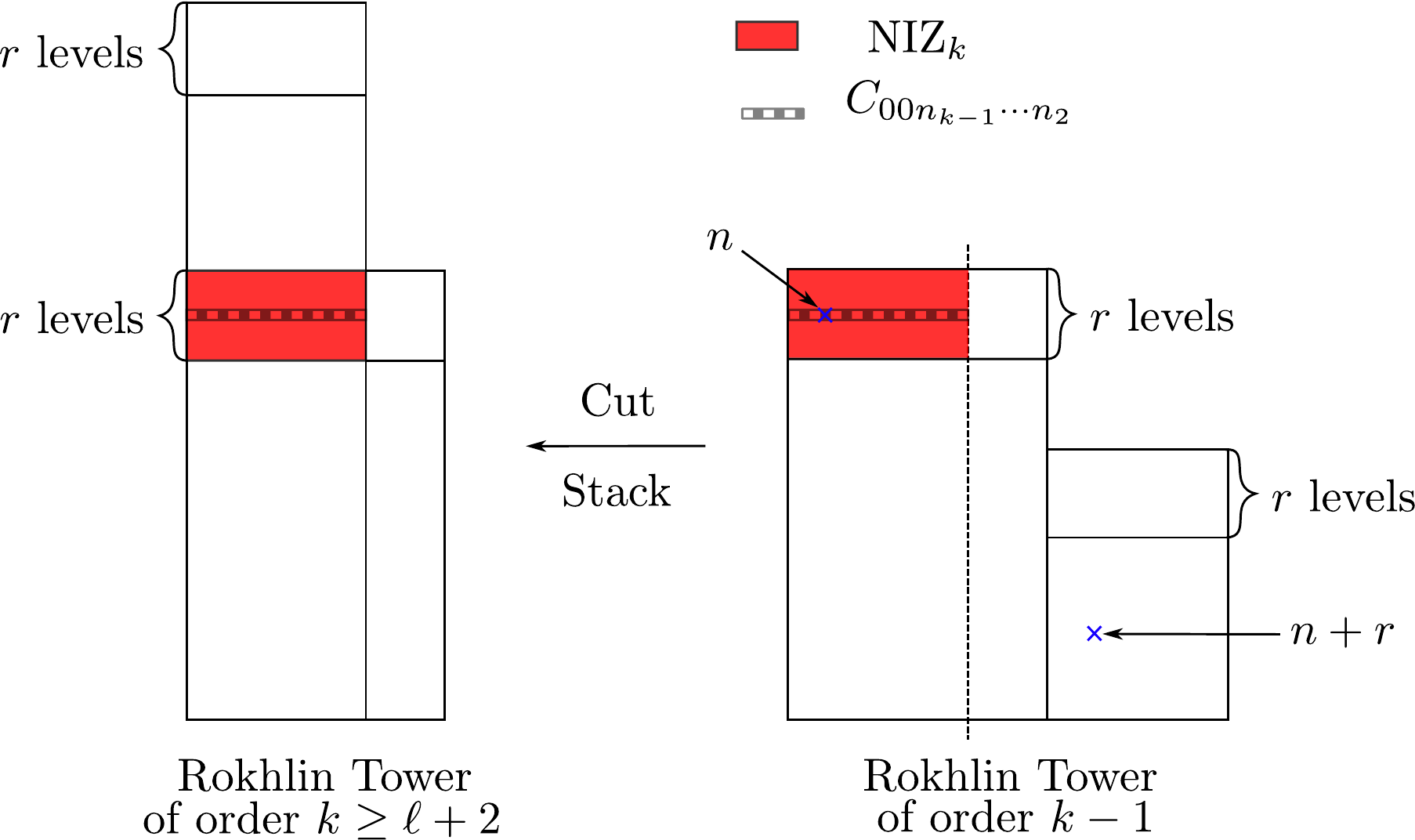}
    \caption{Main argument to justify $(n+r)_k=1$.}
    \label{main argument}
\end{figure}
So, we have the following addition 
    \[\begin{array}{ccccccccc}
        (n)   &     & 0 & 0 & n_{k-1}   &\cdots & n_{\ell}      & \cdots    & n_2 \\
        (r)   & +   &   &   &           &       & r_{\ell}      & \cdots    & r_2\\ \hline
        (n+r) & =   & 0 & 1 & 0         &\cdots & (n+r)_{\ell}  & \cdots    &(n+r)_2
    \end{array}\]
    with, by hypothesis, $\Delta^{(r)}(n)=d$.
    Then $n+F_{k+1}+r$ is calculated as follows
    \[\begin{array}{ccccccccccc}
        (n+F_{k+1})     & 0 & 0 & 1 & 0 & n_{k-1}   &\cdots & n_{\ell}      & \cdots    & n_2 \\
        (r)             & + &   &   &   &   &           &       & r_{\ell}      & \cdots    & r_2\\ \hline
        (n+F_{k+1}+r)   & =   & 0 & 0 & 1 & 1 & 0         &\cdots & (n+r)_{\ell}  & \cdots    &(n+r)_2\\
        (n+F_{k+1}+r)   & =   & 0 & 1 & 0 & 0 & 0         &\cdots & (n+r)_{\ell}  & \cdots    &(n+r)_2
    \end{array}\]
    The sum of digits of $n+F_{k+1}+r$ is the same as those of $n+r$ shifted by $-1$ due to the correction of the expansion made. Hence, we get $\Delta^{(r)}(n+F_{k+1})=\Delta^{(r)}(n)-1$ i.e.     
    \[\Delta^{(r)}_{\lvert C_{0010n_{k-1}\cdots n_2}}=\Delta^{(r)}_{\lvert C_{00n_{k-1}\cdots n_2}}-1.\]
\end{proof}
\begin{proof}[Proof of Corollary~\ref{corollaire pour algo}]
Definition~\ref{Def NIZ_k} ensures that the same number of cylinders of order $k+2$ in $\NIZ{k+2}$ is equal to the number of cylinders of order $k$ in $\NIZ{k}$ : there are $r$ cylinders of each order. Lemma~\ref{relation cylindres NIZ} and Lemma~\ref{relation par Delta cylindres NIZ} enable us to conclude.
\end{proof}
These results enable us to give an algorithm that will compute the value of $\mu^{(r)}(d)$ for any $d\in\ZZ$. This algorithm has a low complexity (polynomial). Indeed, Corollary~\ref{corollaire pour algo} implies that we can anticipate the values that will appear in the New Information Zone of order $k$. So we just need to compute $\Delta^{(r)}$ on a finite number of integers to compute exactly $\mu^{(r)}(d)$ ($d\in\ZZ$). We will precise this fact later in this Section. Before, we give a pseudo-code and provide an example of computation.
    
    \subsection{Pseudo-code}\label{PC}

\begin{algorithm}[H]
\caption{Compute $\mu^{(r)}(d)$}
\begin{algorithmic} 
\REQUIRE $r \geq 1$ and  $d\in\ZZ$
\ENSURE $\mu^{(r)}(d)$
\STATE \textbf{STEP 0}: Find the unique $\ell\ge 1$ such that $F_{\ell}\le r < F_{\ell+1}$
\STATE \textbf{STEP 1}: Passage in $\NIZ{\ell}$
\STATE Define $App_{\ell}\leftarrow 0$ (number of apparition of $d$ during the for-loop)
\FOR{$n=0,\cdots,F_{\ell+1}-r-1$}
    \IF{$\Delta^{(r)}(n)=d$}
        \STATE $App_{\ell}\leftarrow App_{\ell}+1$
    \ENDIF
\ENDFOR
\STATE \textbf{STEP 2}: Passage in $\NIZ{\ell+1}$
\STATE Define $App_{\ell+1}\leftarrow 0$
\FOR{$n=F_{\ell+1}-r\cdots,F_{\ell+2}-r-1$}
    \IF{$\Delta^{(r)}(n)=d$}
        \STATE $App_{\ell+1}\leftarrow App_{\ell+1}+1$
    \ENDIF
\ENDFOR
\STATE \textbf{STEP 3}: Passage in $\NIZ{\ell+2}$
\STATE Define $App_{\ell+2}\leftarrow 0$
\FOR{$n=F_{\ell+2}-r,\cdots, F_{\ell+2}-1$}
    \STATE Store $\Delta^{(r)}(n)$ in an array \textbf{ARR1} (with multiplicity)
    \IF{$\Delta^{(r)}(n)=d$}
        \STATE $App_{\ell+2}\leftarrow App_{\ell+2}+1$
    \ENDIF
\ENDFOR
\STATE \textbf{STEP 4}: Passage in $\NIZ{\ell+3}$
\STATE Define $App_{\ell+3}\leftarrow 0$
\FOR{$n=F_{\ell+3}-r,\cdots, F_{\ell+3}-1$}
    \STATE Store $\Delta^{(r)}(n)$ in an array \textbf{ARR2} (with multiplicity)
    \IF{$\Delta^{(r)}(n)=d$}
        \STATE $App_{\ell+3}\leftarrow App_{\ell+3}+1$
    \ENDIF
\ENDFOR
\end{algorithmic}
\end{algorithm}

\newpage

\begin{algorithm}[H]
\begin{algorithmic}
\STATE \textbf{STEP 5}: Search for $d$ in higher orders
\FOR{$i\in \textbf{ARR1}$}
    \IF{$i>d$}
        \STATE $d$ will appear at the order $\ell+2+2(i-d)$ so
        \IF{$App_{\ell+2+2(i-d)}$ is defined}
            \STATE $App_{\ell+2+2(i-d)}\leftarrow App_{\ell+2+2(i-d)}+1$
            \ELSE 
            \STATE $App_{\ell+2+2(i-d)}\leftarrow 1$
        \ENDIF     
    \ENDIF
\ENDFOR
\FOR{$i\in \textbf{ARR2}$}
    \IF{$i>d$}
        \STATE $d$ will appear at the order $\ell+3+2(i-d)$ so
        \IF{$App_{\ell+3+2(i-d)}$ is defined}
            \STATE $App_{\ell+3+2(i-d)}\leftarrow App_{\ell+3+2(i-d)}+1$
            \ELSE 
            \STATE $App_{\ell+3+2(i-d)}\leftarrow 1$
        \ENDIF     
    \ENDIF
\ENDFOR
\STATE \textbf{STEP 6}: Compute $\mu^{(r)}(d)$
\STATE $\mu^{(r)}(d)\leftarrow \sum_{k\ge \ell} \frac{App_k}{\varphi^k}$
\end{algorithmic}
\end{algorithm}

    \subsection{An example : computation of \texorpdfstring{$\mu^{(4)}$}{u4}}

Let us compute the measure $\mu^{(r)}$ for $r=4$. We have $\ell=4$. Looking at the Rokhlin towers (see Example~\ref{Example NIZ pour r=4}) gives that 
\begin{itemize}
    \item $\NIZ{k}=\emptyset$ if $k=1,2,3$,
    \item $\NIZ{4}=C_{0000}$,
    \item $\NIZ{5}=C_{00001}\sqcup C_{00010} \sqcup C_{00100}$,
    \item $\NIZ{6}=C_{000101} \sqcup C_{001000} \sqcup C_{001001} \sqcup C_{001010}$,
    \item $\NIZ{7}=C_{0010000} \sqcup C_{0010001} \sqcup C_{0010010} \sqcup C_{0010100}$.
\end{itemize}
The next terms of $\NIZ{k}$ are found using Lemma~\ref{relation cylindres NIZ}. Using Corollary~\ref{corollaire pour algo}, one can construct the following table that gives the values taken by $\Delta^{(r)}$ (with multiplicity) on each cylinder of $\NIZ{k}$.

\begin{figure}[H]
    \centering
    \includegraphics[scale=0.35]{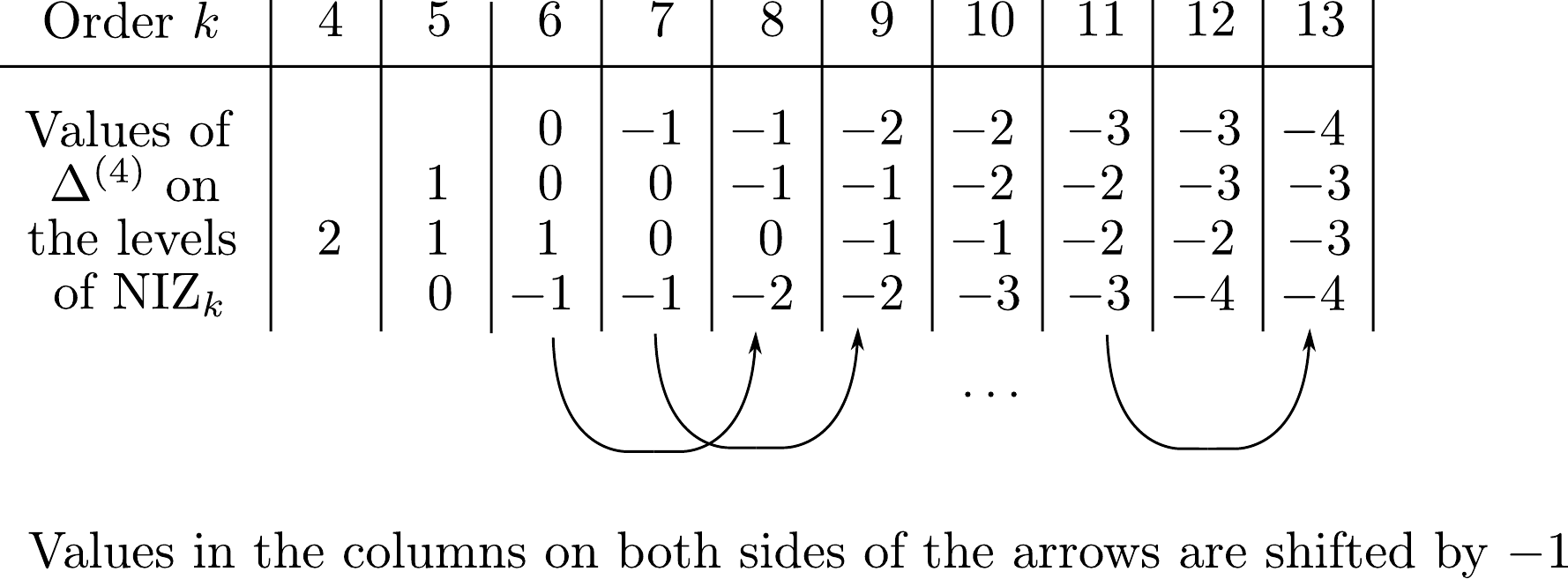}
    \caption{Table of the values of $\Delta^{(4)}$ on $\NIZ{k}$.}
    \label{Value of Delta r}
\end{figure}
Thus, it is possible to know how many times some integer $d\in\ZZ$ appear in each column (ie at each order) and, since we know the measure of a cylinder at each order, it is possible to deduce the value of $\mu^{(r)}(d)$. For instance
\begin{itemize}
    \item The value $2$ appears once at order $4$ so $\mu^{(4)}(2)=\frac{1}{\varphi^4}$.
    \item The value $1$ appears twice at order $5$, once at order $6$ so $\mu^{(4)}(1)=\frac{2}{\varphi^5} + \frac{1}{\varphi^6}$.
    \item The value $0$ appears once at order $5$ and $8$. It also appears twice at order $6$ and $7$, we deduce that $\mu^{(4)}(0)=\frac{1}{\varphi^5} + \frac{2}{\varphi^6} + \frac{2}{\varphi^7} + \frac{1}{\varphi^8}$.
    \item The value $-1$ appears once at order $6$ and $10$. It appears twice at order $7$, $8$ and $9$. We deduce $\mu^{(4)}(-1)=\frac{1}{\varphi^6} + \frac{2}{\varphi^7} + \frac{2}{\varphi^8} + \frac{2}{\varphi^9} + \frac{1}{\varphi^{10}}$.
    \item Since $-1$ is a value at order $\ell+2=6$ that does not appear in the previous order, we deduce by Corollary~\ref{corollaire pour algo} that the value $-2$ will appear as many times as $-1$ appear but at orders incremented by $2$. In other words, the value $-2$ appears once at order $8$ and $12$. It appears twice at order $9$, $10$ and $11$. Thus $\mu^{(4)}(-2)=\mu^{(4)}(-1)\times \frac{1}{\varphi^2}$. We observe that $-2$ does not appear in the lower orders. This observation will be better explained in Proposition~\ref{conseq algo 1} and Corollary~\ref{corollaire pour algo2}.
\end{itemize}
So, we get after simplification 
\begin{align*}
    \mu^{(4)}(d)=\left\{\begin{array}{ll}
         0& \textrm{ if } d>2  \\
         \frac{1}{\varphi^4} & \textrm{ if } d=2\\
         \frac{1}{\varphi^3} & \textrm{ if } d=1\\
         \frac{2}{\varphi^4} & \textrm{ if } d=0\\
         \mu^{(4)}(-1)\times \varphi^{2d+2}& \textrm{ if } d<0\\
    \end{array}\right.
\end{align*}
(where $\mu^{(4)}(-1)=\frac{1}{\varphi^4}+\frac{1}{\varphi^{6}}$.)
\begin{figure}[H]
    \centering
    \includegraphics[scale=0.24]{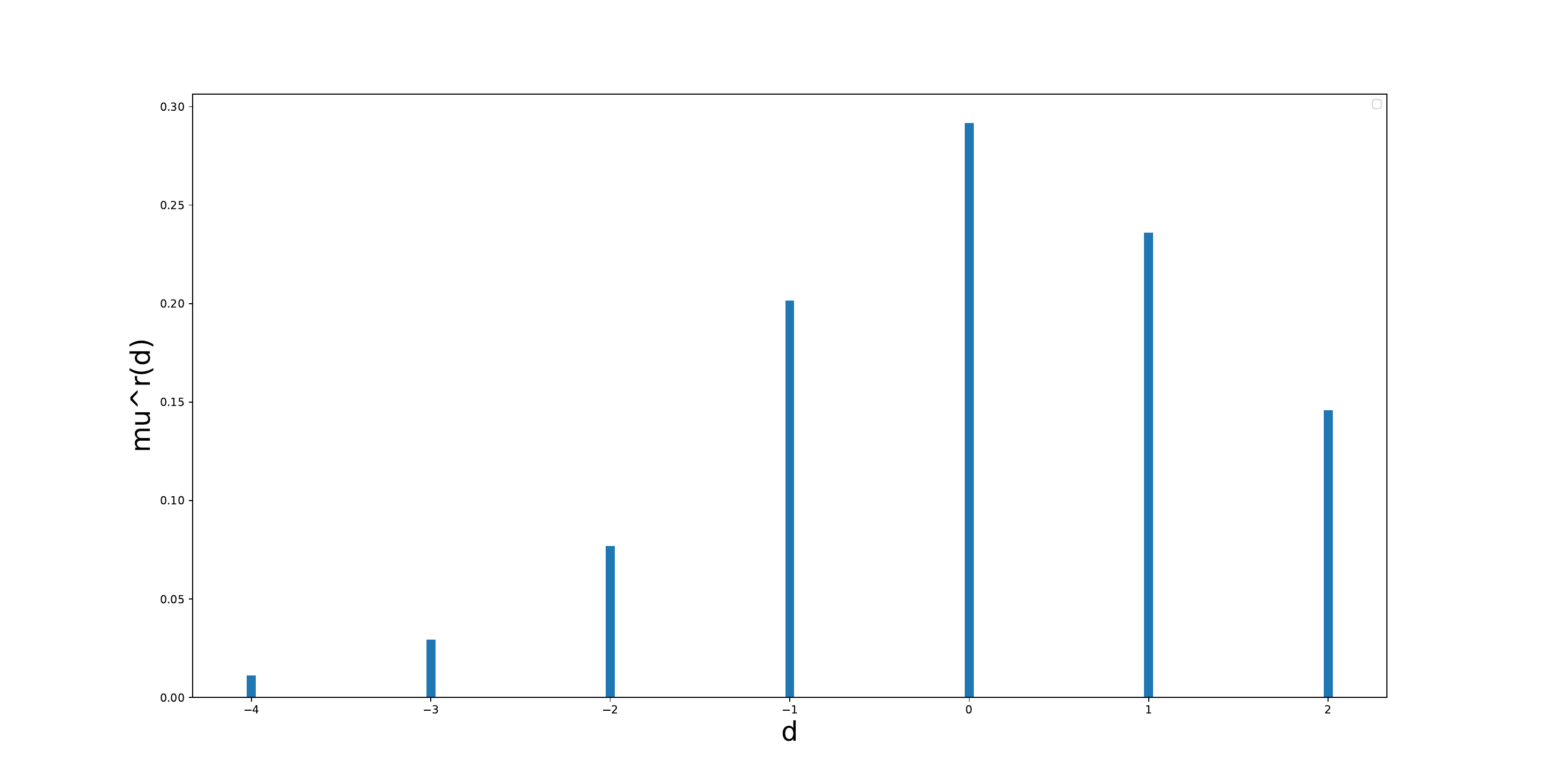}
    \caption{Bar chart of $\mu^{(4)}$.}
\end{figure}
As an exercise, the reader can check that $\mu^{(1)}=\mu^{(F_3)}=\mu^{(F_4)}$. In the next Section, we are going to prove that, for any $k\ge 2$ 
\[\mu^{(F_k)}=\mu^{(1)}.\]
    
    \subsection{Remarks on the algorithm}\label{ROTA}
    
Once again, the algorithm described in Subsections~\ref{DOTA} and \ref{PC} can be adapted in integer base so the following observations can also be adapted in integer base. Also, we write an algorithm that returns, for a given $r\ge 1$ and $d\in\ZZ$, the value $\mu^{(r)}(d)$. Of course, the algorithm can also be adapted to return $\mu^{(r)}(d)$ for every $d\in\ZZ$ or, at least to be executed on a computer, for a finite number of integers $d$ without more computations (the adaptation only consists in manipulations of arrays). 

In this subsection, let $r\ge 1$. Let also $\ell\ge 1$ be the unique integer such that $F_{\ell}\le r< F_{\ell+1}$. The first important observation due to the algorithm is that we only need to compute $\Delta^{(r)}(n)$ for a finite number of $n$'s to know exactly the quantity $\mu^{(r)}$ . More precisely, we need to compute $\Delta^{(r)}$ for 
\begin{itemize}
    \item $F_{\ell+1}-r$ integers during \textbf{STEP 1},
    \item $F_{\ell}$ integers during \textbf{STEP 2},
    \item $r$ integers during \textbf{STEP 3} and
    \item $r$ integers during \textbf{STEP 4}.
\end{itemize}
So, we need to compute the image of $F_{\ell+2}+r$ different integers to compute $\mu^{(r)}$ exactly. One can prove that, when $r=F_{\ell}$, it is possible to adapt the algorithm to reduce that number to $F_{\ell+2}$. The adaptation consists into forgetting \textbf{STEP 4} and store the $F_{\ell}$ values in \textbf{STEP 2}.

The algorithm implies the following properties.
\begin{prop}\label{conseq algo 1}
If $d$ is small enough in $\ZZ$ then there exist exactly $2r$ cylinders of (non-necessarily distinct) orders $k_1,\cdots,k_{2r}$, respectively in $\NIZ{k_1},\cdots,\NIZ{k_{2r}}$ on which $\Delta^{(r)}$ takes the value $d$.
\end{prop}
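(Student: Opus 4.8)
The plan is to exploit the shift-by-$(-1)$ self-similarity of the value-multisets carried by the sets $\NIZ{k}$ that is furnished by Corollary~\ref{corollaire pour algo}, reducing the count to two finite ``base'' multisets, one for each parity of the order.

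I would first set up the bookkeeping. By Proposition~\ref{Def NIZ_k} and the remark following it, $(\NIZ{k})_{k\ge\ell}$ is a partition of $\XX\setminus\{(01)^\infty,(10)^\infty\}$ into cylinders, $\Delta^{(r)}$ is constant on each such cylinder, and for every $k\ge\ell+2$ the set $\NIZ{k}$ consists of exactly $r$ cylinders of order $k$. Hence counting the cylinders on which $\Delta^{(r)}=d$ amounts to summing, over all orders $k$, the number of the constant values carried by the cylinders of $\NIZ{k}$ that equal $d$. The orders $\ell$ and $\ell+1$ carry $F_{\ell+1}-r$ and $F_\ell$ cylinders respectively, but they affect only finitely many orders.

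Next I would organize the orders $\ge\ell+4$ into two parity chains. Let $M_A$ (resp.\ $M_B$) denote the multiset of the $r$ values taken by $\Delta^{(r)}$ on $\NIZ{\ell+4}$ (resp.\ $\NIZ{\ell+5}$). An immediate induction on $m\ge0$ using Corollary~\ref{corollaire pour algo} shows that the value-multiset of $\NIZ{\ell+4+2m}$ is $\{v-m:v\in M_A\}$ and that of $\NIZ{\ell+5+2m}$ is $\{v-m:v\in M_B\}$. Because all the values are integers, $d$ occurs in the $m$-th member of a chain for a \emph{unique} $m\ge0$ exactly when the corresponding element of the base multiset is $\ge d$; summing the multiplicities over $m$ therefore shows that the number of cylinders of order $\ge\ell+4$ on which $\Delta^{(r)}=d$ equals
\[
\#\{v\in M_A:v\ge d\}+\#\{v\in M_B:v\ge d\}.
\]

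Finally I would take $d$ strictly below every value carried by the finitely many cylinders of orders $\ell,\ell+1,\ell+2,\ell+3$ and also below $\min(M_A\cup M_B)$. For such $d$ the orders $\ell,\dots,\ell+3$ contribute nothing, while $\#\{v\in M_A:v\ge d\}=\lvert M_A\rvert=r$ and likewise $\#\{v\in M_B:v\ge d\}=r$, giving the announced total $2r$; the realizing orders are $\ell+4+2(v-d)$ for $v\in M_A$ and $\ell+5+2(v-d)$ for $v\in M_B$, which may coincide when $M_A$ or $M_B$ has repeated entries, matching the ``non-necessarily distinct'' phrasing. The one point needing care is the boundary: since Corollary~\ref{corollaire pour algo} is only stated from $k\ge\ell+4$, I deliberately keep the orders $\ell,\dots,\ell+3$ (whose multisets are not yet in the stable regime) inside the finite exceptional set rather than folding $\NIZ{\ell+2},\NIZ{\ell+3}$ into the chains; this is harmless precisely because ``$d$ small enough'' discards every finite exceptional contribution, and the factor $2$ in $2r$ is seen to come exactly from the two parity chains each contributing their full complement of $r$ base values.
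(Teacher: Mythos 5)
Your proof is correct and follows essentially the same route as the paper's: both exploit the shift-by-$(-1)$ self-similarity of Corollary~\ref{corollaire pour algo} along two parity chains of orders, each carrying a base multiset of $r$ values, so that any sufficiently small $d$ is realized exactly once per base value, giving $2r$ cylinders in total. The only difference is cosmetic: the paper anchors the chains at orders $\ell+2$ and $\ell+3$ (ruling out occurrences in $\NIZ{\ell}$ and $\NIZ{\ell+1}$ via Lemmas~\ref{Contenu NIZ_k} and~\ref{relation par Delta cylindres NIZ}), whereas you anchor at $\ell+4$ and $\ell+5$ and discard the orders $\ell,\dots,\ell+3$ as a finite exceptional set, which in fact stays more scrupulously within the stated range $k\ge\ell+4$ of the corollary.
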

\begin{remarque}
    The ``small enough''  assumption is precised in the proof.
\end{remarque}
It leads to the corollary~\ref{corollaire pour algo2} we state again here.
\begin{corollaire*}
    For $d$ small enough in $\ZZ$, we have the formula
    \[\mu^{(r)}(d-1)=\mu^{(r)}(d)\times \frac{1}{\varphi^{2}}.\]
\end{corollaire*}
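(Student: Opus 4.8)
The plan is to read $\mu^{(r)}(d)$ off as a sum of cylinder measures over the New Information Zones, and then to turn the value-$d$ count into the value-$(d-1)$ count by invoking the order-raising, value-lowering correspondence of Corollary~\ref{corollaire pour algo} together with the exact count of $2r$ cylinders supplied by Proposition~\ref{conseq algo 1}. Raising every order by $2$ is what produces the announced factor $\varphi^{-2}$.

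First I would record the measure formula. Write $N_k(e)$ for the number of cylinders composing $\NIZ{k}$ on which $\Delta^{(r)}$ takes the value $e$. Since $(\NIZ{k})_{k\ge\ell}$ partitions $\XX$ up to the two $\PP$-negligible points $(01)^{\infty},(10)^{\infty}$, since $\Delta^{(r)}$ is constant on each cylinder of each $\NIZ{k}$, and since for $k\ge\ell+2$ every such cylinder carries a leading $00$ (Lemma~\ref{Contenu NIZ_k}) and hence has $\PP$-measure $\varphi^{-k}$ by Proposition~\ref{prop caract mesure zeck}, Corollary~\ref{corollaire_prop_moment Zeck} gives, for every $e\in\ZZ$ attained only at orders $\ge\ell+2$,
\[\mu^{(r)}(e)=\PP(\{x\in\XX:\Delta^{(r)}(x)=e\})=\sum_{k\ge\ell}\frac{N_k(e)}{\varphi^{k}}.\]
This is exactly the quantity assembled in STEP 6 of the pseudo-code.

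Next I would fix $d$ below the threshold of Proposition~\ref{conseq algo 1}, small enough that the proposition applies both to $d$ and to $d-1$ and that all value-$d$ and value-$(d-1)$ cylinders sit at orders $\ge\ell+2$ (so that Lemma~\ref{Contenu NIZ_k} applies). The proposition then furnishes exactly $2r$ cylinders $C_1,\dots,C_{2r}$, of orders $k_1,\dots,k_{2r}$, on which $\Delta^{(r)}=d$, and likewise exactly $2r$ cylinders on which $\Delta^{(r)}=d-1$. By Lemma~\ref{relation cylindres NIZ} and Lemma~\ref{relation par Delta cylindres NIZ}, each $C_i=C_{00\,n_{k_i-1}\cdots n_2}\subset\NIZ{k_i}$ is sent to $C_i'=C_{0010\,n_{k_i-1}\cdots n_2}\subset\NIZ{k_i+2}$, a cylinder of order $k_i+2$ on which $\Delta^{(r)}=d-1$. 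This map is injective (delete the inserted $10$ to recover $C_i$), so its image is a set of $2r$ distinct value-$(d-1)$ cylinders; as there are only $2r$ value-$(d-1)$ cylinders in all, the map is onto them. Hence the value-$(d-1)$ cylinders are precisely the $C_i'$, of orders $k_i+2$, and the measure formula yields
\[\mu^{(r)}(d-1)=\sum_{i=1}^{2r}\frac{1}{\varphi^{k_i+2}}=\frac{1}{\varphi^{2}}\sum_{i=1}^{2r}\frac{1}{\varphi^{k_i}}=\frac{1}{\varphi^{2}}\,\mu^{(r)}(d).\]

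The one step needing care — and the main obstacle — is the surjectivity claim: one must know that the $2r$ images $C_i'$ exhaust all cylinders carrying the value $d-1$, with no stray occurrence of $d-1$ at a low order escaping the correspondence. This is exactly where the two instances of Proposition~\ref{conseq algo 1} are used: both counts equal $2r$, and an injection between two finite sets of equal cardinality is automatically a bijection. I would therefore take pains, in making "small enough" precise, to choose the threshold small enough for Proposition~\ref{conseq algo 1} to hold simultaneously at $d$ and at $d-1$ and to confine all relevant cylinders to orders $\ge\ell+2$; once this cardinality match is secured, the remaining computation is the routine extraction of the factor $\varphi^{-2}$.
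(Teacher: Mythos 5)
Your proof is correct and follows essentially the same route as the paper: the paper's own argument likewise combines the exact count of $2r$ cylinders from Proposition~\ref{conseq algo 1} with the order-$(+2)$, value-$(-1)$ correspondence of Corollary~\ref{corollaire pour algo} (via Lemmas~\ref{relation cylindres NIZ} and~\ref{relation par Delta cylindres NIZ}) and the cylinder-measure formula to extract the factor $\varphi^{-2}$. Your only packaging difference is that you obtain the exhaustion of the value-$(d-1)$ cylinders by applying Proposition~\ref{conseq algo 1} a second time and upgrading the injection to a bijection by cardinality, whereas the paper reads both configurations directly off the seed cylinders at orders $\ell+2$ and $\ell+3$; this is a harmless, slightly more explicit version of the same step.
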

\begin{proof}[Proof of Proposition~\ref{conseq algo 1}]
    For $k\ge \ell +2$, there are $r$ cylinders of order $k$ that compose $\NIZ{k}$ so $\Delta^{(r)}$ takes, at most, $r$ different values on those cylinders. For $k\ge \ell+2$, let $(d_i^{(k)})_{1\le i\le r}$ be the collection of values taken by $\Delta^{(r)}$ on the cylinders of order $k$ in $\NIZ{k}$, repeated with multiplicity. Denote \[m:=min\left\{d_i^{(k)}~|~i=1,\cdots,r~\textrm{and}~k=\ell+2,\ell+3\right\}.\]
    Now, let $d\le m$. We observe that due to Lemma~\ref{Contenu NIZ_k} and Lemma~\ref{relation par Delta cylindres NIZ}, $d$ is actually smaller than any value taken by $\Delta^{(r)}$ on $\NIZ{\ell}$ or $\NIZ{\ell+1}$. Furthermore, for any $i\in [1,r]$ and any $k\in\{\ell+2,\ell+3\}$, there exists a unique $j_i^{(k)}\in\NN$ such that $d=d_i^{(k)}-j_i^{(k)}$. Due to Corollary~\ref{corollaire pour algo}, $d$ will be a value reached by $\Delta^{(r)}$ on the corresponding cylinder of order $k+2j_i^{(k)}$ (by repeated use of Lemma~\ref{etape relation cylindres NIZ} and \ref{relation par Delta cylindres NIZ}). There are $2r$ cylinders at order $\ell+2$ and $\ell+3$ so $d$ will appear exactly on $2r$ cylinders.
\end{proof}
\begin{proof}[Proof of Corollary~\ref{corollaire pour algo2}]
    With the same notation as in the proof of Proposition~\ref{conseq algo 1}, let $d\le m$. Then $d$ is a value taken by exactly $2r$ different cylinders of some orders. But, due to Proposition~\ref{conseq algo 1} and Corollary~\ref{corollaire pour algo}, $\Delta^{(r)}$ will take the value $d-1$ on the same number of cylinders but with the orders of those cylinders shifted by $+2$ so their measures are divided by $\varphi^{2}$. 
\end{proof}

\section{How to prove \texorpdfstring{$\mu^{(F_{\ell})}=\mu^{(1)}$}{uFk=u1}}\label{HTPMFM1}

There is an analogous relation in the integer base $b$ case. The law of adding $b^{\ell}$ is the same as the one of adding $1$. It is trivial in base $b$ since the addition $x+b^{\ell}$ will not change the first digits of $x$, the addition really consists in adding $1$ from a certain position. In the Zeckendorf representation case, it is not trivial since carries propagate in both directions. Here, the proof consists into looking at our algorithm described in Section~\ref{HTCM} in the special case $r=F_{\ell}$. However, before starting, we need to compute $\mu^{(1)}$.
\begin{prop}\label{mu1}
For $d\in\ZZ$, we have
\[\mu^{(1)}(d)=\left\{\begin{array}{ll}
    0 & \textrm{ if }d\ge 2  \\
    \frac{1}{\varphi^2} & \textrm{ if } d=1 \\
    \frac{1}{\varphi^{2-2d}} & \textrm{ otherwise.}
\end{array}\right.\]
\end{prop}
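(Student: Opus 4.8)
The plan is to compute $\mu^{(1)}(d)$ directly as the $\PP$-measure of the level set $\{x\in\XX:\Delta^{(1)}(x)=d\}$, using the identity $\mu^{(1)}(d)=\PP(\{\Delta^{(1)}=d\})$ furnished by Corollary~\ref{corollaire_prop_moment Zeck}. The whole computation is then governed by two earlier results: Lemma~\ref{On Delta1 Zeck}, which pins down exactly on which cylinders $\Delta^{(1)}$ is defined and what constant value it takes there, and Proposition~\ref{prop caract mesure zeck}, which gives the $\PP$-measure of any cylinder.

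First I would read off the level sets. By Lemma~\ref{On Delta1 Zeck}, up to the two $\PP$-negligible points $(10)^\infty$ and $(01)^\infty$, the domain of $\Delta^{(1)}$ is partitioned into the cylinders $C_{00(10)^{d'}}$ (on which $\Delta^{(1)}=1-d'$) and $C_{001(01)^{d'}}$ (on which $\Delta^{(1)}=-d'$), for $d'\ge 0$. Hence solving $1-d'=d$ and $-d'=d$ shows that $\{\Delta^{(1)}=d\}$ is: empty when $d\ge 2$; the single cylinder $C_{00}$ when $d=1$; and the disjoint union $C_{00(10)^{1-d}}\sqcup C_{001(01)^{-d}}$ when $d\le 0$ (the two constraints $d'\ge 0$ being exactly what produces these three regimes, and the disjointness being automatic since the cylinders belong to the partition of Lemma~\ref{On Delta1 Zeck}).

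Next I would evaluate the measures. Each cylinder in play begins with a $0$, so by Proposition~\ref{prop caract mesure zeck} its measure is $1/\varphi$ raised to its length. This gives $\PP(C_{00})=1/\varphi^2$, hence $\mu^{(1)}(1)=1/\varphi^2$; and for $d\le 0$,
\begin{align*}
\mu^{(1)}(d)=\PP\left(C_{00(10)^{1-d}}\right)+\PP\left(C_{001(01)^{-d}}\right)=\frac{1}{\varphi^{4-2d}}+\frac{1}{\varphi^{3-2d}}.
\end{align*}
Factoring out $\varphi^{-(2-2d)}$ and using the defining identity $\varphi^2=\varphi+1$, equivalently $\varphi^{-1}+\varphi^{-2}=1$, collapses the two terms to $\varphi^{-(2-2d)}$, which is the claimed value.

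There is no serious obstacle here: the statement is essentially a direct translation of Lemma~\ref{On Delta1 Zeck} through the cylinder-measure formula. The only points requiring care are keeping the exponents (the cylinder lengths) correct and noticing the golden-ratio identity $\varphi^{-1}+\varphi^{-2}=1$ that merges the two contributions for $d\le 0$ into a single power of $\varphi$; the case split $d\ge 2$ / $d=1$ / $d\le 0$ simply records for which $d$ each of the two families of cylinders contributes.
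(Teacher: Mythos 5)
Your proposal is correct and follows essentially the same route as the paper: the paper's proof likewise partitions $\XX\backslash\{(01)^{\infty},(10)^{\infty}\}$ into $C_{00}$ and, for $d\le 0$, the pairs $C_{0(01)^{1-d}}\sqcup C_{00(10)^{1-d}}$ (the first of which equals your $C_{001(01)^{-d}}$), reads off the constant value of $\Delta^{(1)}$ on each piece, and applies Proposition~\ref{prop caract mesure zeck}. Your only cosmetic difference is that you derive the partition explicitly from Lemma~\ref{On Delta1 Zeck} and spell out the golden-ratio identity $\varphi^{-1}+\varphi^{-2}=1$ that the paper leaves implicit in its final simplification.
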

\begin{proof}
    Thanks to the algorithm, we explicit the following partition:
    \begin{align}\label{partition pour Delta 1}
        \XX\backslash \{(01)^{\infty},(10)^{\infty}\}=C_{00}\bigsqcup_{d\le 0} \left(C_{0(01)^{1-d}}\bigsqcup C_{00(10)^{1-d}}\right).
    \end{align}
    We observe that $\Delta^{(1)}(C_{00})=1$ and, for every $d\le 0$, 
    \[\Delta^{(1)}(C_{0(01)^{1-d}})=\Delta^{(1)}(C_{00(10)^{1-d}})=d.\] 
    Using Proposition~\ref{prop caract mesure zeck}, we conclude the proof.
\end{proof}

Now, let $\ell\ge 3$. We follow the path given by the algorithm. We can rewrite Proposition~\ref{Def NIZ_k} in our special case $r=F_{\ell}$ :
\begin{enumerate}
    \item if $2\le k\le \ell-1$, $\NIZ{k}=\emptyset$,
    \item $\NIZ{\ell}$ is the union of the $F_{\ell-1}$ first levels of the large tower of order $\ell$ and
    \item if $k\ge \ell+1$, $\NIZ{k}$ is the union of the $F_{\ell}$ levels between the $F_k-F_{\ell}^{th}$ and the $F_k^{th}$ levels of the large tower of order $k$.
\end{enumerate}
Executing the algorithm, we want to compute $\Delta^{(F_{\ell})}$ on the levels of $\NIZ{\ell}$. We obtain the following lemma.
\begin{lemme}\label{DeltaFl_orderl}
    \[\Delta^{(F_{\ell})}(\NIZ{\ell})=1.\]
\end{lemme}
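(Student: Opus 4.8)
The plan is to reduce the statement to a one-line, carry-free addition performed on the integer representatives of the levels of $\NIZ{\ell}$. First I would specialise the description of $\NIZ{\ell}$ to the case $r=F_{\ell}$: Proposition~\ref{Def NIZ_k} tells us that $\NIZ{\ell}$ is the union of the first $F_{\ell+1}-r$ levels of the large tower of order $\ell$, and here $F_{\ell+1}-r=F_{\ell+1}-F_{\ell}=F_{\ell-1}$, so $\NIZ{\ell}$ consists of exactly the bottom $F_{\ell-1}$ levels of that tower.

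Next I would identify those levels with integers. Since $0$ occupies the bottom level of the large tower of order $\ell$ and $T$ acts as $n\mapsto n+1$, the successive levels of this tower contain the integers $0,1,\ldots,F_{\ell+1}-1$; all of these lie in the large tower because $n<F_{\ell+1}$ forces $n_j=0$ for $j\ge \ell+1$, and there are exactly $F_{\ell+1}$ of them, matching the height. Consequently the bottom $F_{\ell-1}$ levels forming $\NIZ{\ell}$ are precisely the integers $n$ with $0\le n<F_{\ell-1}$.

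The crux is then an elementary observation about such $n$. From $n<F_{\ell-1}$ I would deduce that the Zeckendorf expansion of $n$ has $n_j=0$ for every $j\ge \ell-1$; in particular $n_{\ell-1}=n_{\ell}=n_{\ell+1}=0$. This puts the addition $n+F_{\ell}$ into the first case of Proposition~\ref{T^{F_n}}, namely $\overline{\cdots 0\mathbf{0}0\cdots}\mapsto\overline{\cdots 0\mathbf{1}0\cdots}$: the digit at position $\ell$ flips from $0$ to $1$ and no other digit is touched, so $s(n+F_{\ell})=s(n)+1$ and $\Delta^{(F_{\ell})}(n)=1$. Because $\Delta^{(F_{\ell})}$ is constant on each level of $\CST{\ell}$ (as recalled just after the definition of $\CST{k}$), and the levels of $\NIZ{\ell}$ are levels of $\CST{\ell}$, the value $1$ is attained on all of $\NIZ{\ell}$, which is the claim.

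I do not anticipate a genuine obstacle; the only point needing a little care is ensuring one invokes the truly carry-free case of Proposition~\ref{T^{F_n}}. This requires $n_{\ell-1}=0$, hence the sharp bound $n<F_{\ell-1}$, rather than the weaker $n_{\ell}=n_{\ell+1}=0$ that Lemma~\ref{Contenu NIZ_k} alone would supply; it is precisely the height $F_{\ell-1}$ of $\NIZ{\ell}$ (and not merely its $00$ prefix) that guarantees no leftward carry propagation occurs.
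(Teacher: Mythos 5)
Your proof is correct and takes essentially the same route as the paper, which likewise observes that each cylinder of order $\ell$ in $\NIZ{\ell}$ contains an element of the integer interval $[0,F_{\ell-1}-1]$, hence has $000$ as its leftmost digits (positions $\ell+1,\ell,\ell-1$), so that adding $F_{\ell}$ falls into the carry-free first case of Proposition~\ref{T^{F_n}} and increases the sum of digits by exactly $1$. The only cosmetic difference is that you pass through the integer representatives $0\le n<F_{\ell-1}$ and then invoke constancy of $\Delta^{(F_{\ell})}$ on levels, whereas the paper applies the carry-free addition directly to every $x$ in such a cylinder; your remark that the sharp bound $n<F_{\ell-1}$ (forcing $n_{\ell-1}=0$) is what rules out any interaction at position $\ell-1$ is precisely the point the paper's $000$ prefix encodes.
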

\begin{proof}
Indeed, consider a cylinder of order $\ell$ in $\NIZ{\ell}$. Its name has for leftmost digits $000$ since the cylinders contains one element of the integer interval $[0,F_{\ell-1}-1]$. So the addition with $F_{\ell}$ considered in these cylinders are
\[\begin{array}{ccccccc}
     (x)            &   &\cdots & 0 & 0 & 0 & \cdots  \\
     (F_{\ell})     & + &       &   & 1 &   & \\ \hline
     (x+F_{\ell})   & = &\cdots & 0 & 1 & 0 & \cdots
\end{array}\]
Thus the variation of the sum of digits is $1$.
\end{proof}
We now look at $\NIZ{\ell+1}$ which is composed of $F_{\ell}$ cylinders of order $\ell+1$. We have the following proposition.
\begin{prop}\label{DeltaFl_orderl+1}
In $\NIZ{\ell+1}$, there are 
\begin{itemize}
    \item $F_{\ell-1}$ levels on which $\Delta^{(F_{\ell})}=0$ and
    \item $F_{\ell-2}$ levels on which $\Delta^{(F_{\ell})}=1$.
\end{itemize}
\end{prop}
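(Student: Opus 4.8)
The plan is to evaluate $\Delta^{(F_{\ell})}$ directly on the $F_{\ell}$ integers representing the levels of $\NIZ{\ell+1}$, namely those $n$ with $F_{\ell-1}\le n<F_{\ell+1}$, by splitting this range according to the position of the leftmost nonzero digit of $n$, and then to close the argument by an induction on $\ell$ that sends the harder half of the range to the analogous statement two steps down. First I would record two elementary additions. If $F_{\ell-1}\le n<F_{\ell}$, then $n_{\ell}=0$ and the leftmost digit of $n$ is $n_{\ell-1}=1$; writing $n=F_{\ell-1}+m'$ with $m'<F_{\ell-2}$, the identity $F_{\ell-1}+F_{\ell}=F_{\ell+1}$ gives $n+F_{\ell}=F_{\ell+1}+m'$, and since $m'<F_{\ell-2}<F_{\ell}$ the summand $m'$ stays at positions $\le\ell-3$ with no carry interaction, so $s(n+F_{\ell})=1+s(m')=s(n)$ and $\Delta^{(F_{\ell})}(n)=0$ on these $F_{\ell-2}$ integers. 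If instead $F_{\ell}\le n<F_{\ell+1}$, then $n_{\ell}=1$, so $n=F_{\ell}+m$ with $m<F_{\ell-1}$, and \eqref{Fib_relat2} yields $n+F_{\ell}=2F_{\ell}+m=F_{\ell+1}+(F_{\ell-2}+m)$. As $m<F_{\ell-1}$ we have $F_{\ell-2}+m<F_{\ell}$, so $F_{\ell-2}+m$ occupies only positions $\le\ell-1$, leaving position $\ell$ empty before the $1$ at position $\ell+1$; hence $s(n+F_{\ell})=1+s(F_{\ell-2}+m)$ while $s(n)=1+s(m)$, giving the reduction
\[\Delta^{(F_{\ell})}(F_{\ell}+m)=s(F_{\ell-2}+m)-s(m)=\Delta^{(F_{\ell-2})}(m),\qquad 0\le m<F_{\ell-1}.\]

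With these two facts I would run the induction, packaging the proposition together with Lemma~\ref{DeltaFl_orderl} into the single claim $Q(\ell)$: on the integer interval $[0,F_{\ell+1})$ the function $\Delta^{(F_{\ell})}$ takes only the values $0$ and $1$, the value $1$ occurring $F_{\ell}$ times and the value $0$ occurring $F_{\ell-1}$ times. The base cases $\ell=3,4$ are checked by direct computation. For $\ell\ge 5$ I partition $[0,F_{\ell+1})$ into $[0,F_{\ell-1})$, $[F_{\ell-1},F_{\ell})$ and $[F_{\ell},F_{\ell+1})$: on the first block $\NIZ{\ell}$, Lemma~\ref{DeltaFl_orderl} gives $\Delta^{(F_{\ell})}=1$ ($F_{\ell-1}$ integers); on the second block the first addition above gives $\Delta^{(F_{\ell})}=0$ ($F_{\ell-2}$ integers); and on the third block the reduction identifies the values with those of $\Delta^{(F_{\ell-2})}$ on $[0,F_{\ell-1})=[0,F_{(\ell-2)+1})$, to which the induction hypothesis $Q(\ell-2)$ applies, contributing the value $1$ exactly $F_{\ell-2}$ times and the value $0$ exactly $F_{\ell-3}$ times. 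Adding the three blocks produces the value $1$ on $F_{\ell-1}+F_{\ell-2}=F_{\ell}$ integers and the value $0$ on $F_{\ell-2}+F_{\ell-3}=F_{\ell-1}$ integers, and no other values occur; this is $Q(\ell)$.

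To finish, since $\NIZ{\ell+1}=[F_{\ell-1},F_{\ell+1})$ and Lemma~\ref{DeltaFl_orderl} already accounts for all $F_{\ell-1}$ occurrences of the value $1$ on $\NIZ{\ell}=[0,F_{\ell-1})$, subtracting leaves exactly $F_{\ell-2}$ occurrences of $1$ and $F_{\ell-1}$ occurrences of $0$ on $\NIZ{\ell+1}$, which is the statement (the level measures then come from Proposition~\ref{prop caract mesure zeck}). The main obstacle is the third block, i.e. proving that $\Delta^{(F_{\ell})}$ takes \emph{only} the values $0$ and $1$ there: this is exactly what the reduction to $\Delta^{(F_{\ell-2})}$ secures, and it is the point where one must argue carefully that the two-sided carry created by adding $F_{\ell}$ does not cross the gap at position $\ell$, so that the digit sum splits cleanly as $1+s(F_{\ell-2}+m)$. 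Once the restriction to $\{0,1\}$ is established the counting is forced, and it can independently be confirmed by the telescoping identity $\sum_{n\in\NIZ{\ell+1}}\Delta^{(F_{\ell})}(n)=\sum_{j=F_{\ell+1}}^{F_{\ell+2}-1}s(j)-\sum_{n=F_{\ell-1}}^{F_{\ell+1}-1}s(n)=F_{\ell-2}$.
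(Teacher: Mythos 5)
Your proof is correct, but it takes a genuinely different route from the paper's. The paper proves the proposition by enumerating the cylinders of $\NIZ{\ell+1}$ from the top of the tower downwards, splitting on the parity of $\ell$ (only the odd case is written out), identifying at each stage two explicit families of cylinder names, $C_{00(10)^{\ell'-k+1}00n_{2k-3}\cdots n_2}$ (on which $\Delta^{(F_{\ell})}=1$) and $C_{00(10)^{\ell'-k}010n_{2k-2}\cdots n_2}$ (on which $\Delta^{(F_{\ell})}=0$), and summing the Fibonacci-many members of each family; as a by-product it exhibits the exact ordered sequence of values down the tower (alternating runs of $1$'s and $0$'s of Fibonacci lengths). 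You instead prove the stronger joint claim $Q(\ell)$ on all of $[0,F_{\ell+1})=\NIZ{\ell}\sqcup\NIZ{\ell+1}$ by induction in steps of two, the engine being the reduction identity $\Delta^{(F_{\ell})}(F_{\ell}+m)=\Delta^{(F_{\ell-2})}(m)$ for $0\le m<F_{\ell-1}$, which compresses the two-sided carry $2F_{\ell}=F_{\ell+1}+F_{\ell-2}$ (the phenomenon behind case \eqref{recul1} of Proposition~\ref{T^{F_n}}) into one arithmetic line; your verification that the carry cannot cross the empty position $\ell$ (the estimates $m'<F_{\ell-2}$ and $F_{\ell-2}+m<F_{\ell}$) is exactly right, and the base cases $\ell=3,4$ check out, as does the final subtraction giving $F_{\ell-2}$ ones and $F_{\ell-1}$ zeros on $\NIZ{\ell+1}$. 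What each approach buys: yours is shorter, uniform in the parity of $\ell$, and yields the extra fact that $\Delta^{(F_{\ell})}$ takes only the values $0$ and $1$ on the whole of $[0,F_{\ell+1})$; the paper's gives finer information, namely the precise order in which the values appear level by level, which is what its description via concatenated Fibonacci runs encodes. Two minor remarks: like the paper, you silently identify each level of $\NIZ{\ell+1}$ with its unique representative integer in $[F_{\ell-1},F_{\ell+1})$, which is legitimate because $\Delta^{(r)}$ is constant on such levels (see the proof of Lemma~\ref{lemme_technique Zeck}); and your closing telescoping identity is asserted rather than derived, so as written it is a consistency check that follows from the result rather than an independent confirmation --- harmless, since nothing in your argument depends on it.
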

For technical reasons, we need to separate the proof in two parts : one when $\ell$ is odd and the other if $\ell$ is even. However, since both are treated the same way (the differences are only technicalities), we only consider the case where $\ell$ is odd. We are going to prove that the values taken by $\Delta^{(F_{\ell})}$ are the $F_{\ell}$ first terms of the sequence 
\begin{itemize}
    \item $1,0,1,0,0,1,1,1,0,0,0,0,0,1,1,1,1,1,1,1,1, \cdots$ (if $\ell$ is odd)
    \item  $0,1,0,1,1,0,0,0,1,1,1,1,1,0,0,0,0,0,0,0,0,\cdots$ (if $\ell$ is even) or
\end{itemize}
where both sequences start with $0$ or $1$ depending on the parity and then, the construction consists in concatenating alternatively $0$'s or $1$'s a Fibonacci number of times.
\begin{proof}[Proof of Proposition~\ref{DeltaFl_orderl+1} if $\ell$ is odd.]
    Let $\ell'\ge 1$ and $\ell:=2\ell'+1\ge 3$ . The highest level in $\NIZ{\ell+1}$ is the cylinder $C_{00(10)^{\ell'}}$ since it contains $F_{\ell+1}-1$ ; the lowest is the cylinder $C_{00010^{\ell-3}}$ since it contains $F_{\ell-1}$.

    The cylinder of order $\ell+1$ just below $C_{00(10)^{\ell'}}$ is $C_{00(10)^{\ell'-1}01}$. We observe that, if $\ell=3$ (ie $\ell'=1$ or $F_{\ell}=2$), this last cylinder $C_{00(10)^{\ell'-1}01}$ is actually the same as $C_{00010^{\ell-3}}$. In general, there are two possibilities.
    \begin{itemize}
        \item Either there are no other cylinders of order $\ell+1$ which means $\ell=3$ (ie $\ell'=1$ or $F_{\ell}=2$). So far, we have mentioned $2$ cylinders of order $4$ in $\NIZ{4}$ and the values of $\Delta^{(F_3)}$ on these cylinders are $0$ and $1$ (with an easy computation): it is the conclusion of the proposition if $\ell=3$ (if we agree $F_0:=1$).
        \item Or there are other cylinders to find in $\NIZ{\ell+1}$ which means  $F_{\ell}>2$ (ie $\ell\ge 5$ or $F_{\ell}\ge 5$) and implies that there are $F_{\ell}-F_2\ge 3$ levels to identify.
    \end{itemize}
    To continue, we suppose $\ell\ge 5$ (ie $\ell'\ge2$) and identify the three cylinders that are below $C_{00(10)^{\ell'-1}01}$ : they are
    \begin{itemize}
        \item $C_{00(10)^{\ell'-1}00}$ on which $\Delta^{(F_{\ell})}$ equals 1,
        \item $C_{00(10)^{\ell'-2}0101}$ on which $\Delta^{(F_{\ell})}$ equals 0 and
        \item $C_{00(10)^{\ell'-2}0100}$ on which $\Delta^{(F_{\ell})}$ equals 0.
    \end{itemize}
    Once again, we observe that, if $\ell=5$ (ie $\ell'=2$), $C_{00(10)^{\ell'-2}0100}=C_{00010^{\ell-3}}$. There are again two possibilities.
    \begin{itemize}
        \item Either there is no other cylinders of order $\ell+1$ which means $\ell=5$ (ie $\ell'=2$ or $F_{\ell}=5$). So far again, we have mentioned $5$ cylinders of order $6$ in $\NIZ{6}$ and $\Delta^{(F_5)}$ takes thrice the value $0$ and twice the value $1$ on these cylinders : it is the conclusion of the proposition if $\ell=5$.
        \item Or there are other cylinders to find in $\NIZ{\ell+1}$ which means  $F_{\ell}>5$ (ie $\ell\ge 7$ or $F_{\ell}\ge 13$) and implies that there are $F_{\ell}-F_5\ge 8$ levels to identify.
    \end{itemize}
    We continue the same way. If we assume there are, for some $k\in[1,\ell-1]$, $F_{\ell}-F_{2k-1}\ge F_{2k}$ cylinders. There are two kind of that cylinders.
    \begin{itemize}
        \item Those whose names are $C_{00(10)^{\ell'-k+1}00n_{2k-3}\cdots n_2}$ for all $(n_{2k-3}\cdots n_2)\in\XX_f$ : there are $F_{2k-2}$ such cylinders and one can compute that $\Delta^{(F_{\ell})}$ is $1$.
        \item Those whose names are $C_{00(10)^{\ell'-k}010n_{2k-2}\cdots n_2}$ for all $(n_{2k-2}\cdots n_2)\in\XX_f$ : there are $F_{2k-1}$ such cylinders and one can compute that $\Delta^{(F_{\ell})}$ is $0$.
    \end{itemize}
    We observe that if $\ell=2k+1$ (ie $\ell'=k$) the last cylinder identified is $C_{00(10)^{\ell'-k}010^{2k-2}}$ and is actually the same as $C_{00010^{\ell-3}}$. So far, we have identified $1+F_1+(F_2+F_3)+(F_4+F_5)+\cdots +(F_{2k-2}+F_{2k-1})=F_{2k+1}$ cylinders and 
    \begin{itemize}
        \item on $1+F_2+F_4+F_{2k-2}= F_{2k-1}$ of them, $\Delta^{(F_{\ell})}$ is $1$
        \item on $F_1+F_3+F_5+\cdots +F_{2k-1}=F_{2k}$ of them, $\Delta^{(F_{\ell})}$ is $0$
    \end{itemize}
    It is exactly the conclusion when $\ell=2k+1$.
\end{proof}

Finally, we look at $\NIZ{\ell+2}$ which contains also $F_{\ell}$ cylinders of order $\ell+2$. 
\begin{prop}\label{DeltaF1_orderl+2}
 \[\Delta^{(F_{\ell})}(\NIZ{\ell+2})=0.\]
\end{prop}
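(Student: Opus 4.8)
The plan is to identify explicitly the $F_{\ell}$ cylinders of order $\ell+2$ that make up $\NIZ{\ell+2}$ and then to compute the addition of $F_{\ell}$ directly on each of them, exactly in the spirit of the proofs of Lemma~\ref{DeltaFl_orderl} and Proposition~\ref{DeltaFl_orderl+1}. First I would pin down the cylinders. By the special case of Proposition~\ref{Def NIZ_k} recalled above (with $r=F_{\ell}$), $\NIZ{\ell+2}$ is the union of the $F_{\ell}$ levels of the large tower of order $\ell+2$ ranked from $F_{\ell+1}+1$ to $F_{\ell+2}$; since the $j$-th level of the large tower is the cylinder containing the integer $j-1$, these cylinders are exactly those meeting the integer interval $[F_{\ell+1},F_{\ell+2})$. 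Writing such an integer as $n=F_{\ell+1}+m$ with $0\le m<F_{\ell+2}-F_{\ell+1}=F_{\ell}$ and $m=\overline{m_{\ell-1}\cdots m_2}$, its Zeckendorf expansion has $x_{\ell+1}=1$, $x_{\ell}=0$ (because $m<F_{\ell}$) and $x_{\ell+3}=x_{\ell+2}=0$, consistently with Lemma~\ref{Contenu NIZ_k}. Hence each such cylinder is $C_{0010m_{\ell-1}\cdots m_2}$, and there are exactly $F_{\ell}$ of them.

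Next I would perform the addition of $F_{\ell}$ on one such cylinder. Since the digit at position $\ell$ is $0$, the relevant situation is case~\eqref{avancee1} of Proposition~\ref{T^{F_n}} with its exponent taken equal to $0$: the $1$ placed at position $\ell$ meets the $1$ sitting at position $\ell+1$ and, through the identity $F_{\ell+1}+F_{\ell}=F_{\ell+2}$, the local block $\overline{0010}$ occupying positions $\ell+2,\ell+1,\ell$ becomes $\overline{0100}$. In other words the single $1$ is merely shifted from position $\ell+1$ to position $\ell+2$, no digit being created or destroyed. Because $x_{\ell+3}=0$ the leftward carry stops immediately, and because the digit at position $\ell$ is $0$ there is no rightward propagation; in particular all digits of index $\le \ell-1$, which encode $m$, are left untouched. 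As this change affects only digits that are fixed throughout the cylinder, $\Delta^{(F_{\ell})}$ is constant on it, and since exactly one $1$ is translated while none is added or removed, the sum of digits is preserved. Therefore $\Delta^{(F_{\ell})}=0$ on every cylinder of $\NIZ{\ell+2}$, which is the claim.

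The computation itself is elementary; the only point requiring care—and it is precisely the phenomenon isolated in Subsection~\ref{subsection Focus}—is to guarantee that the carry propagation is blocked on \emph{both} sides, so that the net effect of adding $F_{\ell}$ is exactly the local move $\overline{0010}\mapsto\overline{0100}$. This is ensured on the left by the pattern $00$ at the leftmost positions of the cylinder name (Lemma~\ref{Contenu NIZ_k}), which absorbs and then stops the carry at position $\ell+2$, and on the right by the vanishing of the digit at position $\ell$, which rules out any propagation into the digits of $m$. Once this double blocking is justified, the parity issues that complicated the proof of Proposition~\ref{DeltaFl_orderl+1} do not arise here, since the whole argument takes place within the three positions $\ell,\ell+1,\ell+2$ regardless of the parity of $\ell$.
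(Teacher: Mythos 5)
Your proof is correct and follows essentially the same route as the paper: you identify the levels of $\NIZ{\ell+2}$ as the cylinders $C_{0010\,n_{\ell-1}\cdots n_2}$ containing one integer each of $[F_{\ell+1},F_{\ell+2})$ and then check that adding $F_{\ell}$ merely turns $\overline{001}$ into $\overline{010}$ at positions $\ell+3,\ell+2,\ell+1$ (via $F_{\ell}+F_{\ell+1}=F_{\ell+2}$), leaving the digit sum unchanged. The paper's proof states this computation in one line ("we compute that $\Delta^{(F_{\ell})}$ is null"), whereas you spell out the two-sided blocking of the carry via case~\eqref{avancee1} of Proposition~\ref{T^{F_n}}; this is the same argument, just made explicit.
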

\begin{proof}
    Consider a cylinder of order $\ell+2$ in $\NIZ{\ell+2}$. Since it contains exactly one integer of the interval $[F_{\ell+1},F_{\ell+2}-1]$, its name is $C_{0010n_{\ell-1}\cdots n_2}$ for every $(n_{\ell-1}\cdots n_2)\in\XX_f$. We compute that, on such a cylinder, $\Delta^{(F_{\ell})}$ is null.
\end{proof}
We do not need to prove anything for $\NIZ{\ell+3}$, the algorithm teaches us that $\Delta^{(F_{\ell})}$ is going to take the same values, but shifted by $-1$, on the same number of levels as in $\NIZ{\ell+1}$. Now it is time to prove our theorem.
\begin{theo*}
For any $\ell\ge 2$, $\mu^{(F_{\ell})}=\mu^{(1)}$.
\end{theo*}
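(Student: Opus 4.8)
The plan is to feed the distributions of $\Delta^{(F_{\ell})}$ on the successive zones $\NIZ{k}$, already computed in the preceding results, into the summation formula of the algorithm and to check that the outcome coincides term by term with $\mu^{(1)}$ from Proposition~\ref{mu1}. The case $\ell=2$ is vacuous since $F_2=1$, so I would assume $\ell\ge 3$ throughout. Recall that, by Proposition~\ref{prop caract mesure zeck} together with Lemma~\ref{Contenu NIZ_k}, every cylinder of order $k$ occurring in $\NIZ{k}$ has a leftmost $0$ and hence measure $\varphi^{-k}$; consequently $\mu^{(F_{\ell})}(d)=\sum_{k\ge\ell}a_k(d)\,\varphi^{-k}$, where $a_k(d)$ denotes the number of cylinders of order $k$ in $\NIZ{k}$ on which $\Delta^{(F_{\ell})}=d$.

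The first step is to tabulate $a_k(d)$ for every $k$ and $d$. Lemma~\ref{DeltaFl_orderl}, Proposition~\ref{DeltaFl_orderl+1} and Proposition~\ref{DeltaF1_orderl+2} describe the three lowest nonempty zones: $\NIZ{\ell}$ carries the value $1$ on all $F_{\ell-1}$ of its cylinders; $\NIZ{\ell+1}$ carries $0$ on $F_{\ell-1}$ of its cylinders and $1$ on the remaining $F_{\ell-2}$; and $\NIZ{\ell+2}$ carries $0$ on all $F_{\ell}$ of its cylinders. I would then propagate these to all higher orders by iterating Corollary~\ref{corollaire pour algo}, which shifts the list of attained values by $-1$ while raising the orders by $+2$. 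This yields, for every $j\ge 0$, that $\NIZ{\ell+2+2j}$ carries the single value $-j$ on all $F_{\ell}$ of its cylinders, while $\NIZ{\ell+1+2j}$ carries $-j$ on $F_{\ell-1}$ of its cylinders and $1-j$ on the other $F_{\ell-2}$; in particular no value ever exceeds $1$.

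The second step is to sum the contributions to each fixed $d$. No $d\ge 2$ is attained, so $\mu^{(F_{\ell})}(d)=0=\mu^{(1)}(d)$ there. The value $d=1$ is collected only from $\NIZ{\ell}$ and $\NIZ{\ell+1}$, giving
\[\mu^{(F_{\ell})}(1)=\frac{F_{\ell-1}}{\varphi^{\ell}}+\frac{F_{\ell-2}}{\varphi^{\ell+1}}=\frac{\varphi F_{\ell-1}+F_{\ell-2}}{\varphi^{\ell+1}}=\frac{1}{\varphi^2},\]
the last equality using the identity $\varphi F_{k}+F_{k-1}=\varphi^{k}$ with $k=\ell-1$. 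For $d=-j$ with $j\ge 0$, exactly three streams contribute: the all-$F_{\ell}$ zone $\NIZ{\ell+2+2j}$, the $F_{\ell-1}$-part of $\NIZ{\ell+1+2j}$, and the $F_{\ell-2}$-part of $\NIZ{\ell+3+2j}$ (whose value $1-(j+1)=-j$ matches the target). Summing these gives
\[\mu^{(F_{\ell})}(-j)=\frac{1}{\varphi^{2j}}\left(\frac{F_{\ell-1}}{\varphi^{\ell+1}}+\frac{F_{\ell}}{\varphi^{\ell+2}}+\frac{F_{\ell-2}}{\varphi^{\ell+3}}\right)=\frac{\varphi F_{\ell+1}+F_{\ell}}{\varphi^{\ell+3+2j}}=\frac{1}{\varphi^{2+2j}},\]
where the middle equality uses $\varphi^2=\varphi+1$ and $F_{\ell-1}+F_{\ell-2}=F_{\ell}$, and the last one applies $\varphi F_{k}+F_{k-1}=\varphi^{k}$ with $k=\ell+1$. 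This matches $\mu^{(1)}(-j)=\varphi^{-(2+2j)}$ from Proposition~\ref{mu1}, completing the identification on all of $\ZZ$.

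Since the genuinely combinatorial content — the values of $\Delta^{(F_{\ell})}$ on the three lowest zones — is supplied by the lemmas already proved, the remaining difficulty is purely organisational: correctly tracking which value lands in which zone, and with what multiplicity, as Corollary~\ref{corollaire pour algo} is iterated. The main pitfall I anticipate is the bookkeeping for the $F_{\ell-2}$-stream: its contribution to a fixed target $d=-j$ sits two orders above that of the $F_{\ell-1}$-stream (order $\ell+3+2j$ versus $\ell+1+2j$), because the value $1$ it carries in $\NIZ{\ell+1}$ must descend all the way to $-j$, and it is easy to misalign this with the $F_{\ell-1}$- and $F_{\ell}$-streams or to double-count a level. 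Once the alignment is settled, the single identity $\varphi F_{k}+F_{k-1}=\varphi^{k}$ collapses every finite sum to a clean power of $\varphi$, so that the agreement with Proposition~\ref{mu1} is immediate.
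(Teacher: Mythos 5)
Your proof is correct and takes essentially the same route as the paper's: both feed the values of $\Delta^{(F_{\ell})}$ on the three base zones $\NIZ{\ell}$, $\NIZ{\ell+1}$, $\NIZ{\ell+2}$ (Lemma~\ref{DeltaFl_orderl}, Propositions~\ref{DeltaFl_orderl+1} and~\ref{DeltaF1_orderl+2}) into the algorithm's summation, propagate to higher orders by the $-1$ shift of Corollary~\ref{corollaire pour algo}, and collapse each finite sum via the identity $\varphi F_{k}+F_{k-1}=\varphi^{k}$. Your three-stream bookkeeping for $d=-j$ (multiplicities $F_{\ell-1}$, $F_{\ell}$, $F_{\ell-2}$ at orders $\ell+1+2j$, $\ell+2+2j$, $\ell+3+2j$) is exactly the paper's count, and the arithmetic checks out.
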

\begin{proof}
    We recall 
    \[\mu^{(1)}(d)=\left\{\begin{array}{ll}
    0 & \textrm{ if }d\ge 2  \\
    \frac{1}{\varphi^2} & \textrm{ if } d=1 \\
    \frac{1}{\varphi^{2-2d}} & \textrm{ otherwise.}
\end{array}\right.\]
    For $\ell=2$, it is trivial. Let $\ell\ge 3$ and $d\in\ZZ$. It is also trivial when $d\ge 2$ since $\Delta^{(F_{\ell})}$ does not take this value. Suppose that $d=1$, this value appears in $F_{\ell-1}$ levels in $\NIZ{\ell}$, $F_{\ell-2}$ levels in $\NIZ{\ell+1}$ and nowhere else. So
    \[\mu^{(F_{\ell})}(1)=\frac{F_{\ell-1}}{\varphi^{\ell}}+\frac{F_{\ell-2}}{\varphi^{\ell+1}}=\frac{\varphi^{\ell-1}}{\varphi^{\ell+1}}=\frac{1}{\varphi^{2}}=\mu^{(1)}(1).\]
    If $d\le 0$, the value $d$ will appear, due to Corollary~\ref{corollaire pour algo}, on $F_{\ell-1}$ levels at order $\ell+1-2d$, $F_{\ell}$ levels at order $\ell+2-2d$, $F_{\ell-2}$ levels at order $\ell+3-2d$ and nowhere else. Thus
    \begin{align*}
        \mu^{(F_{\ell})}(1)&=\frac{F_{\ell-1}}{\varphi^{\ell+1-2d}}+\frac{F_{\ell}}{\varphi^{\ell+2-2d}}+\frac{F_{\ell-2}}{\varphi^{\ell+3-2d}}\\
            &=\frac{\varphi^{\ell+1}}{\varphi^{\ell+3-2d}}\\
            &=\frac{1}{\varphi^{2-2d}}\\
            &=\mu^{(1)}(d).
    \end{align*}
\end{proof}

\section{\texorpdfstring{$\Delta^{(r)}$}{Dr} as a mixing process}\label{DMP Zeck}

We work on the probability space $(\XX,\mathscr{B}(\XX),\PP)$. For a given integer $r$, $\Delta^{(r)}$ is viewed as a random variable with law $\mu^{(r)}$ by Corollary~\ref{corollaire_prop_moment Zeck} (the randomness comes from the argument $x$ of $\Delta^{(r)}$, considered as a random outcome in $\XX$ with law $\PP$). In this section, we will decompose $\Delta^{(r)}$ as a sum composed of a finite number of random variables satisfying a universal inequality on their mixing coefficients, as in $\cite{TREJYH}$.

    \subsection{The process}\label{the process}

The case $r=0$ is irrelevant so we assume $r\ge 1$. For $1\le i\le \rho(r)$, we will write $B_i$ as the $i^{th}$ block present in the expansion of $r$, starting from the unit digit. We define $r[i]$ as the integer whose expansion is given by the $i$ first blocks of the expansion of $r$ (see Figure below). We observe that $r[\rho(r)]=r$.
\begin{figure}[H]
    \centering
    \includegraphics[scale=0.38]{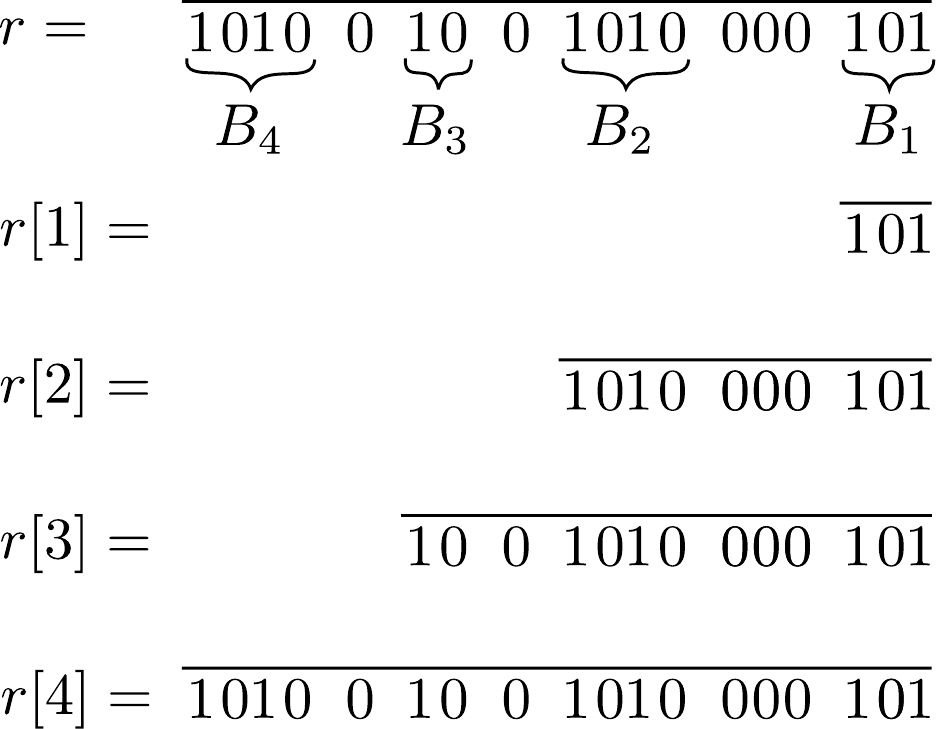}
    \caption{Example of the construction of $(r[i])_{i=0,\cdots,\rho(r)}$.}
    \label{construction r[i] Zeck}
\end{figure} 
With the convention $r[0]:=0$, we observe the trivial equality 
\[r=\sum_{i=1}^{\rho(r)}r[i]-r[i-1].\]
For $1\le i\le \rho(r)$, we define almost everywhere on $\XX$ (see Subsection~\ref{SOzeck})
\[X_i^{(r)}:=\Delta^{(r[i]-r[i-1])}\circ T^{r[i-1]}.\] 
Since $r[i]-r[i-1]=\overline{B_i 0\cdots 0}$, the function  $X_i^{(r)}$ is a random variable corresponding to the action of the $i^{th}$ block $B_i$ once the previous blocks have already been taken into consideration.
From \eqref{decomp_sum Zeck}, we get
\[\Delta^{(r)}=\sum_{i=1}^{\rho(r)}X_i^{(r)}.\]
In particular, if $x\in\XX$ is randomly chosen with law $\PP$, then $\displaystyle\sum_{i=1}^{\rho(r)}X_i^{(r)}(x)$ follows the law $\mu^{(r)}$.

    \subsection{The \texorpdfstring{$\alpha$}{a}-mixing coefficients on the actions of blocks}

This part is devoted to the proof of one of the main theorem stated in the Introduction: we are going to show that the $\alpha$-mixing coefficients for the process $(X_i^{(r)})_{i=1,\cdots,\rho(r)}$ satisfy a universal upper bound which is independent of $r$. In particular, these coefficients exponentially decrease to $0$.
\begin{theo*}
 The $\alpha$-mixing coefficients of $(X_i^{(r)})_{i=1,\cdots,\rho(r)}$ satisfy
\[\forall k\ge 1, \hspace{5mm} \alpha(k)\le 12\left(1-\frac{1}{\varphi^8}\right)^{\frac{k}{6}}+\frac{1}{\varphi^{2k}}.\]
\end{theo*}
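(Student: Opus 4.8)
The plan is to mimic the decoupling strategy used in \cite{TREJYH} for the integer base, with the extra work needed to control the rightward carry propagation, which has no analogue in base $b$. Fix $k\ge 1$, $p\ge 1$, and events $A\in\sigma(X_i^{(r)}:1\le i\le p)$ and $B\in\sigma(X_i^{(r)}:i\ge k+p)$; the goal is to bound $\lvert\PP(A\cap B)-\PP(A)\PP(B)\rvert$ uniformly in $r$ and in these choices. First I would localise each action $X_i^{(r)}$. Writing $[\lambda_i,\Lambda_i]$ for the range of digit positions occupied by the $i$-th block $B_i$, I would use the stopping results of Subsection~\ref{subsection Focus} — in particular Corollary~\ref{stopping cdt bloc >1} and Lemma~\ref{blocage_avancee} — to show that, once the preceding blocks have acted, the value of $X_i^{(r)}$ is determined by the digits of $x$ inside a window located by the first occurrence of a $00$-pattern to the left of $B_i$ and of a right-stopping pattern $w_0,w_1$ to its right. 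The key point is that it suffices to fix the eight digits flanking a block to stop the carry propagation in both directions, and that this costs a factor of order $1/\varphi^8$.

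Next I would introduce a decoupling event $G$ located in the gap of digit positions separating the top of block $B_p$ from the bottom of block $B_{k+p}$. Since consecutive blocks are separated by at least a pattern $00$ and each block occupies at least two positions, this gap spans a number of digit positions that grows linearly in $k$, so one can fit of order $k/6$ disjoint windows of eight consecutive coordinates inside it. On $G$ I require that at least one such window realises the eight-digit configuration of Corollary~\ref{stopping cdt bloc >1}, which simultaneously blocks the leftward carry emanating from the lower group $\{X_i^{(r)}:i\le p\}$ and the rightward carry emanating from the upper group $\{X_i^{(r)}:i\ge k+p\}$. On $G$ the two groups decouple: the coordinates read by the lower group and those read by the upper group lie on opposite sides of a fixed barrier, and are therefore governed by disjoint, conditionally independent blocks of digits.

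To estimate $\PP(G^c)$ I would use the renewal property (Proposition~\ref{renouvellement}) to make the successive windows conditionally independent, and the two-sided bound on digit probabilities (Proposition~\ref{borne proba Zeck}) to show that each window realises the required configuration with probability bounded below by a constant multiple of $1/\varphi^8$; an elementary renewal estimate then gives $\PP(G^c)\le 12\,(1-\varphi^{-8})^{k/6}$, the constant $12$ absorbing the conditioning on the finitely many boundary windows. Finally I would combine everything through a triangle inequality, exactly as in the proof of Proposition~\ref{Melange_coord}: conditioning on $G$ and using the conditional independence there,
\[
\lvert\PP(A\cap B)-\PP(A)\PP(B)\rvert\le 2\,\PP(G^c)+(\text{residual digit term}),
\]
where the residual term accounts for the direct dependence surviving the barrier and is controlled by the coordinate $\phi$-mixing bound of Proposition~\ref{Melange_coord} together with Proposition~\ref{Lien coeff phi et alpha Zeck}, contributing at most $1/\varphi^{2k}$. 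Adding the two contributions yields the stated bound.

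The main obstacle I anticipate is the rightward carry propagation: unlike base $b$, where a single barrier to the left of the lower group suffices, here the upper group can send carries downward into the territory of the lower group, and one must verify that a right-stopping pattern genuinely freezes $X_i^{(r)}$ for all $i\le p$ regardless of how long the intermediate blocks $B_{p+1},\dots,B_{k+p-1}$ are. Making the localisation of $X_i^{(r)}$ precise, and checking that the eight-digit configuration produces true conditional independence under $\PP$ rather than a merely deterministic freeze on a part of the space, is where most of the careful bookkeeping will go.
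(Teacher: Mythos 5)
Your overall strategy---freeze the carries with the eight-digit stopping configurations of Corollaries~\ref{stopping condition bloc 1} and \ref{stopping cdt bloc >1} anchored at intermediate blocks, estimate the failure probability via Proposition~\ref{borne proba Zeck}, and control the residual digit dependence via Propositions~\ref{Melange_coord} and \ref{Lien coeff phi et alpha Zeck}---is exactly the paper's, but your single-barrier decoupling event $G$ contains a genuine gap that the paper's proof is specifically engineered to avoid. If you only require that \emph{some} stopping window in the gap occurs, then on $G$ you get $A\cap G=A'\cap G$ and $B\cap G=B'\cap G$ with $A'$ reading the digits below the barrier and $B'$ the digits above it; but those two index sets are then separated only by the bounded width of one window, so the coordinate $\phi$-mixing bound contributes $\tfrac12\phi(\mathrm{const})$, a constant, and \emph{not} the term $1/\varphi^{2k}$. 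Worse, since the location of the successful window is random, extracting fixed coordinate thresholds forces you to take the worst case over the whole middle region, and then the $\sigma$-algebras carrying $A'$ and $B'$ overlap. Your claim of conditional independence across the barrier is also false: conditioning on $G$ does not render the Markovian digits on the two sides independent---the paper explicitly notes that, unlike the integer-base case, this term ``will not totally vanish'', and it removes the conditioning by a second triangle inequality, at a cost of an extra $3\PP(\overline{C})$, before applying the \emph{unconditional} $\phi$-mixing estimate. (Note also that the eight-digit configurations must be anchored at blocks $B_i$ of $r$, since the hypotheses of Corollary~\ref{stopping cdt bloc >1} constrain the digits of $r$ as well as those of $x$; arbitrary eight-coordinate windows in the gap will not do.)

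The fix, which is the paper's actual construction: impose \emph{two} stopping events, $C_1$ asking for a successful configuration at some block with index in the leftmost third $(p,\,p+k/3]$ of the window and $C_2$ at some block in the rightmost third $[p+2k/3,\,p+k)$, and set $C:=C_1\cap C_2$. Then $A\cap C=A'\cap C$ with $A'$ measurable with respect to the digits of index $\le N_1+2$, where $N_1$ is the \emph{fixed} position of the leftmost $1$ of block $B_{\lfloor p+k/3\rfloor}$, and $B\cap C=B'\cap C$ with $B'$ measurable with respect to the digits of index $\ge N_2$, at block $B_{\lfloor p+2k/3\rfloor}$; since each intermediate block occupies at least three digit positions, $N_2-N_1$ is of order $k$, and Proposition~\ref{Melange_coord} then yields $\tfrac12\phi(k)\le 1/\varphi^{2k}$. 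Skipping every other block index within each third gives $\lfloor k/6\rfloor$ disjoint windows, each succeeding with conditional probability at least $\varphi^{-8}$ by Proposition~\ref{borne proba Zeck}, whence $\PP(\overline{C})\le 2\left(1-\varphi^{-8}\right)^{k/6}$; the two triangle inequalities cost $3\PP(\overline{C})$ each, giving $6\PP(\overline{C})+\tfrac12\phi(k)$ and hence the stated constant $12$. Your own bookkeeping ($2\,\PP(G^c)$ with $\PP(G^c)\le 12(1-\varphi^{-8})^{k/6}$) would already overshoot that constant, independently of the structural problem above.
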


\begin{proof}
    Let $k,p\ge 1$. Let $A\in\sigma(X_i^{(r)}:1\le  i\le p)$ and $B\in\sigma(X_i^{(r)}:  i\ge k+p)$. Let $C\in\sigma(X_i^{(r)}:  p< i <k+p)$ such that $\PP(C)>0$. Then, we have the following inequality
     \begin{align}\label{ineg triang 2 Zeck}
        & \begin{array}{lll}
        \lvert\PP(A\cap B)-\PP(A)\PP(B)\rvert & \le \lvert\PP(A\cap B)-\PP_C(A\cap B)\rvert \\
        & \hspace{1 cm}+ \lvert\PP_C(A\cap B)-\PP_C(A)\PP_C(B)\rvert \\
        & \hspace{1 cm} + \lvert\PP_C(A)\PP_C(B)-\PP(A)\PP(B)\rvert.
        \end{array}
    \end{align}
    Using \eqref{ineg generale conditionnement Zeck}, we get
    \begin{align}\label{etape 1 mixing block}
        \lvert\PP(A\cap B)-\PP(A)\PP(B)\rvert & \le 3\PP(\overline{C}) + \lvert\PP_C(A\cap B)-\PP_C(A)\PP_C(B)\rvert.
    \end{align}
    As in \cite{TREJYH} (Lemma 4.3), we will consider an event $C$ which has a probability close to $1$ and such that, conditionally to $C$, the events $A$ and $B$ are ``almost'' independent. A difference with \cite{TREJYH} is that, as the digits of a random Zeckendorf-adic numbers are not independent, the last term at the right-hand side of \eqref{etape 1 mixing block} will not totally vanish and, thus, some extra work is needed. However, we can anticipate that this problem of ``almost'' independence will be solved using the inequality on the $\phi$-mixing coefficients (on the law of coordinates) stated in Proposition~\ref{Melange_coord}.
    
    To define the event $C$, we use the ideas developed in Subsection~\ref{subsection Focus} (especially Corollaries~\ref{stopping condition bloc 1} and ~\ref{stopping cdt bloc >1}). Since we are working with blocks, we introduce $\ell_i$ as the number of patterns $10$ in $B_i$, the $i^{th}$ block of $r$. We also introduce $n_i$ as the minimal index of digits in $B_i$. Then, we define, for $i=1,\cdots, \rho(r)$, the set Adm$(i)$ as the set of indices corresponding to the digits of $x$ involved in the assumptions of Corollaries~\ref{stopping condition bloc 1} and \ref{stopping cdt bloc >1} if we want to apply them in the area of the block $B_i$ in $r$. More precisely, if $\ell_i=1$, then 
    \[\textrm{Adm}(i):=[n_i-2, n_i+3].\]
    And, if $\ell_i\ge 2$ then
    \[\textrm{Adm}(i):=[n_i-2, n_i+1] \sqcup [n_i-2+2\ell_i, n_i+1+2\ell_i].\]
    Observe that $\lvert \textrm{Adm}(i)\rvert\le 8$.
    \begin{figure}[H]
        \centering
        \includegraphics[scale=0.35]{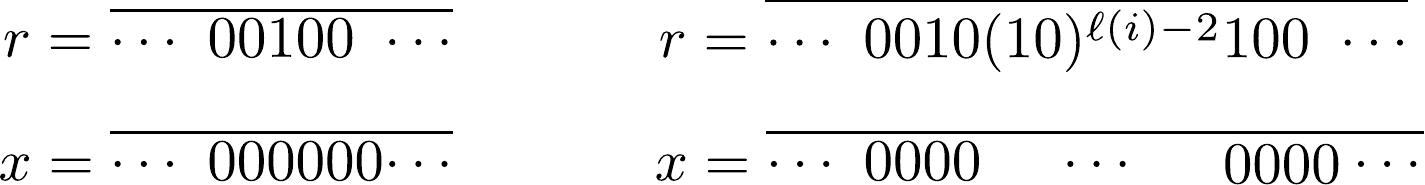}
        \caption{Indices of Adm$(i)$ and conditions put on $x$.}
        \label{Adm(i,l(i))}
    \end{figure}
    Then, we define for $i=1,\cdots,\rho(r)$
    \[C^{(i)}:=\{x\in\XX~: ~\forall j\in \textrm{Adm}(i), ~ x_j=0\}.\]
    We also define the events
    \[ C_1:=\bigcup_{\substack{i=p+2 \\ \textrm{even} }}^{p+\frac{k}{3}}C^{(i)} \hspace{5mm} \textrm{ and } \hspace{5mm} C_2:=\bigcup_{\substack{i=p+\frac{2k}{3} \\ \textrm{even} }}^{p+k-2}C^{(i)}\]
    and, finally 
    \[C:=C_1\cap C_2.\]
    \begin{figure}[H]
        \centering
        \includegraphics[scale=0.35]{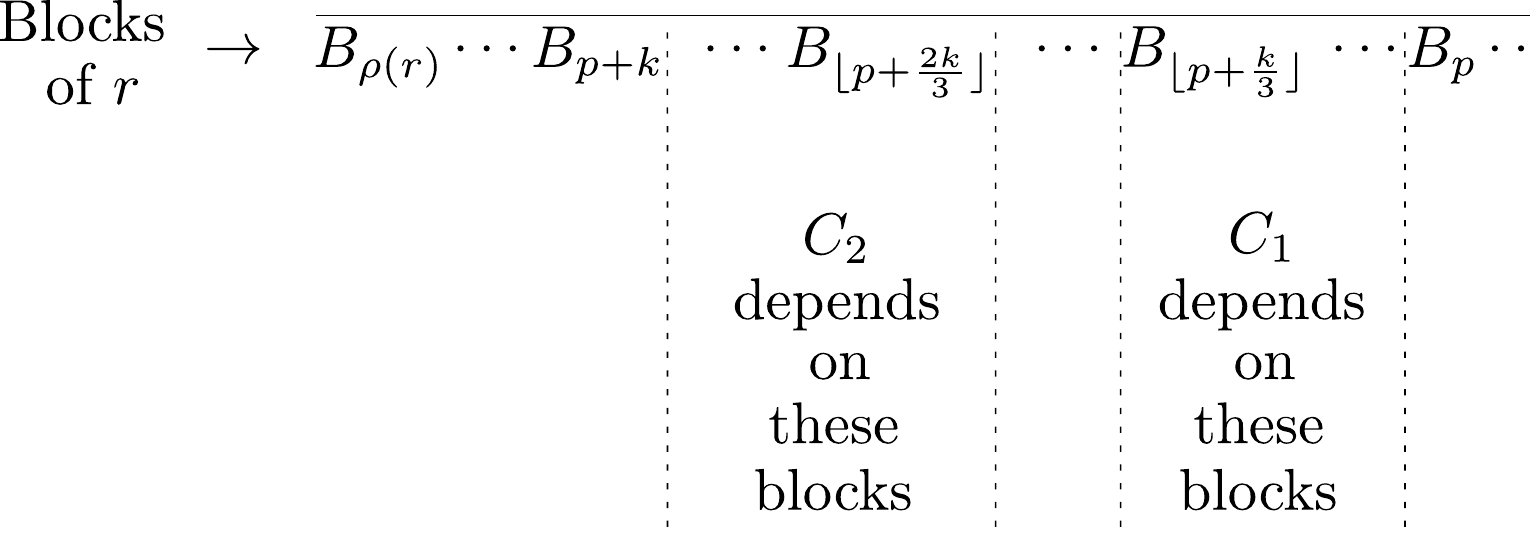}
        \caption{Location of the events $C_1$ and $C_2$ (dots represent blocks or possible zeros).}
        \label{visual}
    \end{figure}
    Thus, $x$ belongs to $C$ if and only if the expansion of $x$ satisfies the hypotheses of Corollary~\ref{stopping condition bloc 1} or \ref{stopping cdt bloc >1} for, at least, two blocks placed on the leftmost third of the window (for at least one block) and the rightmost third of the window (for at least one block).
    
    We prove now that $C$ is an event which has a high probability to happen. We claim that
    \begin{align}\label{majoration C block}
        \PP(C)\ge 1 -2(1-\frac{1}{\varphi^8})^{\frac{k}{6}}.
    \end{align}
    Indeed, denoting $\overline{C_1}$ (resp. $\overline{C_2}$) the complement of $C_1$ (resp. $C_2$) in $\XX$, we have        \[\PP(C)=1-\PP(\overline{C_1})-\PP(\overline{C_2})+\PP(\overline{C_1}\cap\overline{C_2})\ge\PP(C_1)+\PP(C_2)-1.\]
    Then, we obtain from Lemma~\ref{borne proba Zeck}
    \begin{align*}
        \PP(C_1)&=1-\PP\left(\bigcap_{\substack{i=p+2 \\ \textrm{even} }}^{p+\frac{k}{3}} \overline{C^{(i)}}\right) \\
                &=1-\prod_{\substack{i=p+2 \\ \textrm{even} }}^{p+\frac{k}{3}} \PP\left(\overline{C^{(i)}}~\bigg\lvert~\bigcap_{\substack{j=p+2 \\ \textrm{even} }}^{i-2}\overline{C^{(j)}}\right)\\
                & \ge 1-(1-\frac{1}{\varphi^8})^{\frac{k}{6}}
    \end{align*}
    because the product contains $\lfloor\frac{p+\frac{k}{3}-p+2-2}{2}\rfloor=\lfloor\frac{k}{6}\rfloor$ terms. We show that $\PP(C_2$) satisfies the same inequality. We thus obtain \eqref{majoration C block}. Observe that, if $k\ge 194$, $\PP(C)>0$.
    
    Now, we want to control the term $\lvert\PP_C(A\cap B)-\PP_C(A)\PP_C(B)\rvert$ that appears in \eqref{etape 1 mixing block}. We want to use Proposition~\ref{Melange_coord}, and for that we have to clarify which digits of $x$ the events $A$ and $B$ depends on.
    
    But, conditionally to $C$ and thanks to Corollaries~\ref{stopping condition bloc 1} and \ref{stopping cdt bloc >1}, we have that the actions of the $p$ first blocks will only modify digits of indices $\le N_1+2$ where $N_1$ is the index of digit of the leftmost $1$ of the block $B_{\lfloor p+\frac{k}{3}\rfloor}$ of $r$. Thus, there exists $A'\in\sigma(x_i~:~i\le N_1+2)$ such that $A\cap C=A'\cap C$. Likewise, the actions of the $\rho(r)-k-p+1$ last blocks will only modify digits of indices $\ge N_2$ where  $N_2$ the index of the rightmost $0$ of the block $B_{\lfloor p+\frac{2k}{3}\rfloor}$. Thus, there also exists $B'\in\sigma(x_i~:~i\ge N_2)$ such that  $B'\cap C=B\cap C$. 
    \begin{figure}[H]
        \centering
        \includegraphics[scale=0.35]{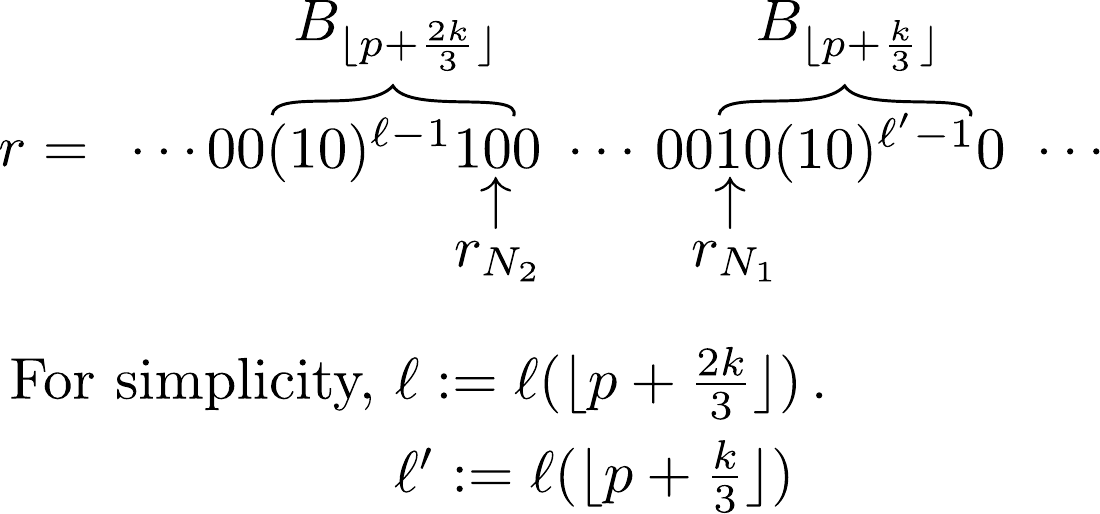}
        \caption{Location of indices $N_1$ and $N_2$ in the expansion of $r$.}
        \label{visual2}
    \end{figure}
    It follows 
     \begin{align}\label{etape 2 mixing block}
        \vert\PP_C(A\cap B)-\PP_C(A)\PP_C(B)\rvert&=\vert\PP_C(A'\cap B')-\PP_C(A')\PP_C(B')\rvert 
    \end{align}
    Then, we get 
    \begin{align*}
        \vert\PP_C(A'\cap B')-\PP_C(A')\PP_C(B')\rvert &\le \vert\PP_C(A'\cap B')-\PP(A'\cap B')\rvert \\
                &\hspace{1cm} +\vert\PP(A'\cap B')-\PP(A')\PP(B')\rvert \\
                &\hspace{1cm} + \vert\PP(A')\PP(B')-\PP_C(A')\PP_C(B')\rvert.
    \end{align*}
    The term $\vert\PP(A'\cap B')-\PP(A')\PP(B')\rvert$ can be controlled using the $\phi$-mixing coefficient on the law of coordinates. Indeed, observe that $N_2-N_1$ is at least about order $k$. Then, using \eqref{ineg generale conditionnement Zeck} and Propositions~\ref{Lien coeff phi et alpha Zeck} and \ref{Melange_coord}, we obtain
    \begin{align}\label{etape 3 mixing block}
        \vert\PP_C(A'\cap B')-\PP_C(A')\PP_C(B')\rvert&\le 3\PP(\overline{C})+\frac{1}{2}\phi(k)
    \end{align}
    Finally, combining \eqref{etape 1 mixing block}, \eqref{etape 2 mixing block} and \eqref{etape 3 mixing block}, we obtain \begin{align*}
         \lvert\PP(A\cap B)-\PP(A)\PP(B)\rvert &\le 6\PP(\overline{C})+\frac{1}{2}\phi(k).
    \end{align*}
    Taking the supremum on $A$ and $B$, we conclude the proof.
\end{proof}

\end{document}